\documentclass[11pt]{amsart}

\usepackage{amsmath,amssymb,amsthm,mathrsfs}
\usepackage[a4paper,margin=1in]{geometry}
\usepackage{enumitem}
\usepackage[colorlinks=true,linkcolor=black,citecolor=black,urlcolor=black]{hyperref}

\theoremstyle{plain}
\newtheorem{theorem}{Theorem}[section]
\newtheorem{proposition}[theorem]{Proposition}
\newtheorem{lemma}[theorem]{Lemma}
\newtheorem{corollary}[theorem]{Corollary}
\theoremstyle{definition}
\newtheorem{definition}[theorem]{Definition}
\theoremstyle{remark}
\newtheorem{remark}[theorem]{Remark}

\newcommand{\T}{\mathbb T}
\newcommand{\R}{\mathbb R}
\newcommand{\Z}{\mathbb Z}
\newcommand{\supp}{\operatorname{supp}}
\newcommand{\diag}{\operatorname{diag}}
\newcommand{\Id}{\mathrm{Id}}
\title[Energy discontinuity of the Lyapunov exponent]{Energy Discontinuity of the Lyapunov Exponent
	for Quasiperiodic Schr\"odinger Operators
	with Gevrey Potentials}

\author{Jiahao Xu}
\address{Beijing International Center for Mathematical Research, Peking University,
	Yiheyuan Road, Beijing 100871, China}
\address{School of Mathematics, Nanjing University,
	Hankou Road, Nanjing 210093, China}
\email{xjhao512@outlook.com}
\thanks{These authors contributed equally to this work.}

\author{Shuo Zhang}
\address{School of Mathematics, Nanjing University,
	Hankou Road, Nanjing 210093, China}
\email{shuozhang@smail.nju.edu.cn}
\thanks{Corresponding author: Shuo Zhang.}

\begin{document}
	
	\begin{abstract}
		We construct quasiperiodic Schr\"odinger operators with Gevrey
		potentials for which the Lyapunov exponent is discontinuous in the
		spectral energy. More precisely, let \(\alpha\in\R\setminus\mathbb Q\)
		satisfy the Diophantine condition
		\(\|k\alpha\|_{\T}\geq\gamma |k|^{-\tau}\) for all
		\(k\in\Z\setminus\{0\}\), with \(\gamma>0\) and \(\tau\geq1\).
		For every Gevrey index \(\mathfrak s>1+\tau\) and every sufficiently
		large coupling \(\lambda\), we construct a real-valued potential
		\(v_\lambda\in G^{\mathfrak s}(\T,\R)\) and a sequence of energies
		\(E_j\downarrow0\) such that
		\[
		L_{v_\lambda}(0)\geq(1-\varepsilon_\lambda)\log\lambda,
		\qquad
		\sup_{j\geq1} L_{v_\lambda}(E_j)\leq(1-c_*)\log\lambda,
		\]
		where \(L_{v_\lambda}(E)\) denotes the Lyapunov exponent of the
		Schr\"odinger operator with potential \(\lambda v_\lambda\),
		\(c_*>0\) is an absolute constant, and \(\varepsilon_\lambda\to0\)
		as \(\lambda\to\infty\). For bounded-type frequencies (\(\tau=1\)),
		the regularity range \(\mathfrak s>2\) is sharp up to the endpoint
		\(\mathfrak s=2\), since the Lyapunov exponent is known to be
		continuous in energy for \(1<\mathfrak s<2\).
	\end{abstract}
	
	\maketitle
	
	\section{Introduction and main result}
	\label{sec1}
	
	Let \(\alpha\in\R\setminus\mathbb Q\) be an irrational frequency,
	\(x\in\T=\R/\Z\) a phase, \(v\colon\T\to\R\) a potential, and
	\(\lambda>1\) a coupling constant. We consider the one-dimensional
	quasiperiodic Schr\"odinger operator \(H_{\alpha,\lambda v,x}\) acting
	on \(\ell^2(\Z)\) by
	\[
	(H_{\alpha,\lambda v,x}u)_m
	=u_{m+1}+u_{m-1}+\lambda v(x+m\alpha)u_m .
	\]
	Write \(Tx=x+\alpha\pmod1\) for the irrational rotation on \(\T\) and
	\(|x|:=\operatorname{dist}_{\T}(x,0)\) for the distance to the origin on
	the torus. Throughout the paper, \(\log^+t:=\max\{\log t,0\}\),
	\(E_{ij}\) denotes the standard elementary matrix with \(1\) in the
	\((i,j)\)-entry and \(0\) elsewhere, and matrix norms \(\|\cdot\|\)
	are operator norms induced by the Euclidean norm on \(\R^2\).
	
	At energy \(E\in\R\), the difference equation \(H_{\alpha,\lambda v,x}u=Eu\)
	is governed by the Schr\"odinger cocycle \((\alpha,A_{v,E})\) on
	\(\T\times\R^2\), generated by the one-step transfer matrix
	\begin{equation*}
		A_{v,E}(x)=
		\begin{pmatrix}E-\lambda v(x)&-1\\1&0\end{pmatrix}\in\mathrm{SL}(2,\R).
	\end{equation*}
	Its forward iterates are given by
	\begin{equation*}
		A_{v,E}^{(m)}(x)
		=A_{v,E}(T^{m-1}x)\cdots A_{v,E}(x),
		\qquad m\geq1,
	\end{equation*}
	with \(A_{v,E}^{(0)}=\Id\). By Kingman's subadditive ergodic theorem,
	the (upper) Lyapunov exponent is given by
	\begin{equation}\label{eqLE}
		L_v(E)=\lim_{m\to\infty}\frac1m
		\int_{\T}\log\|A_{v,E}^{(m)}(x)\|\,dx
		=\inf_{m\geq1}\frac1m
		\int_{\T}\log\|A_{v,E}^{(m)}(x)\|\,dx .
	\end{equation}
	
	A central problem in the spectral theory of quasiperiodic Schr\"odinger
	operators is to understand how the regularity of the sampling function
	\(v\) and the arithmetic of the frequency \(\alpha\) govern the
	regularity of \(L_v(E)\). While \(L_v(E)\) is always continuous in \(E\)
	when \(v\) is real analytic, the behavior in ultradifferentiable and
	smooth classes below analyticity is more delicate. We ask whether \(E\mapsto L_v(E)\) must remain continuous when a
	Gevrey potential \(v\) is held fixed. For general background on
	one-dimensional ergodic Schr\"odinger operators and their Lyapunov
	exponents, we refer to the monographs of Damanik--Fillman
	\cite{DamanikFillmanI,DamanikFillmanII}.
	
	\begin{definition}
		\label{def1}
		For an interval \(I\subset\R\), a Gevrey exponent \(\mathfrak s>1\), and
		a radius parameter \(K\geq1\), we define the Gevrey norm
		\begin{equation}\label{def1a}
			|F|_{\mathfrak s,K;I}
			:=\frac{4\pi^2}{3}\sup_{r\geq0}
			\frac{(1+r)^2}{K^r(r!)^{\mathfrak s}}
			\|\partial_x^rF\|_{C^0(I)}.
		\end{equation}
		Let \(G^{\mathfrak s,K}(I)\) denote the Banach space of smooth
		functions on \(I\) with \(|F|_{\mathfrak s,K;I}<\infty\), and set
		\(G^{\mathfrak s}(I)=\bigcup_{K\geq1}G^{\mathfrak s,K}(I)\).
		For periodic functions on \(\T\), the same norm is taken over one
		period. The normalizing factor \(4\pi^2/3\) is chosen so that
		\(G^{\mathfrak s,K}(I)\) forms a Banach algebra under pointwise
		multiplication (see Lemma~\ref{lem4}).
	\end{definition}
	
	An exponent \(t\geq1\) is \emph{admissible for \(\alpha\)} if there exists
	a constant \(\gamma_t>0\) such that
	\[
	\|k\alpha\|_{\T}\geq\gamma_t |k|^{-t},
	\qquad k\in\Z\setminus\{0\}.
	\]
	In terms of the principal convergents \(p_n/q_n\) of \(\alpha\),
	admissibility of \(t\) is equivalent to the denominator growth bound
	\(q_{n+1}\leq C(\gamma_t,t)q_n^t\) for all \(n\geq1\).
	We distinguish a specific admissible exponent \(t\) from
	the intrinsic Diophantine exponent
	\begin{equation}\label{eq1}
		\tau_*(\alpha)
		:=\inf\bigl\{t\geq1:t\text{ is admissible for }\alpha\bigr\}
		=\limsup_{n\to\infty}
		\frac{\log q_{n+1}}{\log q_n},
	\end{equation}
	where we set \(\tau_*(\alpha)=+\infty\) if \(\alpha\) is Liouville.
	The infimum in \eqref{eq1} need not be attained
	among admissible exponents. Throughout our
	construction, we fix an admissible exponent \(\tau\geq1\) and a
	constant \(\gamma>0\) such that
	\begin{equation}\label{eq2}
		\|k\alpha\|_{\T}\geq\gamma |k|^{-\tau},
		\qquad k\in\Z\setminus\{0\}.
	\end{equation}
	
	Our main theorem establishes energy discontinuity of the Lyapunov
	exponent for a fixed Gevrey potential in the regularity range
	\(\mathfrak s>1+\tau\).
	
	\begin{theorem}
		\label{thm1}
		Let \(\alpha\in\R\setminus\mathbb Q\) satisfy the Diophantine condition
		\eqref{eq2} with parameters \(\tau\geq1\) and \(\gamma>0\), and
		let \(\mathfrak s>1+\tau\). For every sufficiently large coupling
		\(\lambda\), there exist a potential
		\(v_{\lambda,\alpha,\mathfrak s}\in G^{\mathfrak s}(\T,\R)\) and a
		sequence of energies \(E_j\downarrow0\) such that
		\[
		L_{v_{\lambda,\alpha,\mathfrak s}}(0)
		\geq(1-\varepsilon_\lambda)\log\lambda,
		\qquad
		\sup_{j\geq1}L_{v_{\lambda,\alpha,\mathfrak s}}(E_j)
		\leq(1-c_*)\log\lambda,
		\]
		where \(\varepsilon_\lambda\to0\) as \(\lambda\to\infty\) and
		\(c_*>0\) is an absolute constant. In particular, the energy map
		\(E\mapsto L_{v_{\lambda,\alpha,\mathfrak s}}(E)\) is discontinuous at
		\(E=0\). When \(\alpha\) is of bounded type, this conclusion holds for
		every \(\mathfrak s>2\).
	\end{theorem}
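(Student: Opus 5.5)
We follow the strategy of Wang--You type constructions for discontinuity of the Lyapunov exponent (originally in $C^\infty$ and in energy), adapted to the Gevrey scale: build $v$ as a limit of potentials $v_n$ with a cosine-like large-coupling profile, and at each step insert a localized Gevrey bump near the critical region which forces, at a prescribed energy $E_n\downarrow 0$, a resonance that destroys a fixed proportion of the exponent. Concretely, we take $v_0(x)=\cos 2\pi x$ (or a profile with two nondegenerate zeros in the critical region $\{|E-\lambda v|\lesssim \lambda^{1/2}\}$) and define inductively
\[
v_{n}=v_{n-1}+\delta_n \phi_n ,
\]
where $\phi_n$ is a Gevrey-$\mathfrak s$ bump supported in an interval of length $\sim q_{N_n}^{-1}$ around a critical point, with $\|\partial^r\phi_n\|\lesssim K^r (r!)^{\mathfrak s}\ell_n^{-r}$ costs. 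The Gevrey bump cost is $\exp(c\,\ell_n^{-1/(\mathfrak s-1)})$, so $\delta_n$ may be taken as small as $\exp(-c q_{N_n}^{1/(\mathfrak s-1)})$ while the series $\sum\delta_n\phi_n$ converges in $G^{\mathfrak s,K}$ by Lemma~\ref{lem4} and the norm \eqref{def1a}.

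The key steps are as follows. (1) Large deviation / inductive scheme at large coupling: using the standard two-critical-point analysis (Young, Wang--Zhang), show that for $E$ near $0$ the $q_N$-step matrices $A^{(q_N)}_{v_n,E}$ are hyperbolic-like with norm $\approx\lambda^{(1-\varepsilon)q_N}$ away from a union of critical intervals, and that near each critical point the product has the form "rotation of the stable direction by an angle function $g_N(x)$" with $g_N$ having nondegenerate zeros. (2) Energy side: at $E=0$ we keep the angle function $g_N$ of type I (nondegenerate, non-resonant), so that by the Diophantine condition \eqref{eq2} the returns of critical points are separated by $\gamma q_N^{-\tau}$ and the LDT induction closes, giving $L_{v}(0)\ge(1-\varepsilon_\lambda)\log\lambda$; this must be uniform after all later perturbations, which holds because $\delta_m$ for $m>n$ are smaller than any power of $\lambda^{-q_{N_m}}$ needed at scale $N_m$. (3) At energy $E_n$, the perturbation $\delta_n\phi_n$ is tuned so that the angle function at scale $N_n$ acquires a degenerate (tangential) zero, i.e. the stable direction at one critical point coincides with the unstable direction of the return at time $q_{N_n}$; a cancellation of size $\lambda^{c q_{N_n}}$ then occurs on a set of measure $\gtrsim q_{N_n}^{-1}$ per period, and averaging gives $L_{v}(E_n)\le (1-c_*)\log\lambda$ with absolute $c_*$ (essentially the proportion of cancelled growth, e.g. $c_*=1/2$ from one of two critical points being killed).

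The main obstacle is compatibility of scales: the degenerate resonance at scale $q_{N_n}$ requires the perturbation $\delta_n$ to be of size comparable to $\lambda^{-c q_{N_n}}$ times the width of the critical window (which is itself $\gtrsim q_{N_n+1}^{-1}\gtrsim q_{N_n}^{-\tau}$ by \eqref{eq2}), whereas Gevrey-$\mathfrak s$ smoothness forces $\delta_n\gtrsim\exp(-C\,\ell_n^{-1/(\mathfrak s-1)})$ with $\ell_n\sim q_{N_n}^{-\tau}$. Matching $\exp(-q_{N_n}^{\tau/(\mathfrak s-1)})\ll e^{-cq_{N_n}\log\lambda}$ is exactly possible when $\tau/(\mathfrak s-1)<1$, i.e. $\mathfrak s>1+\tau$; for $\tau=1$ this gives $\mathfrak s>2$. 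I would first verify this arithmetic carefully, then check that subsequent perturbations at scales $N_m\gg N_n$ do not undo the cancellation at $E_n$ (they are super-exponentially smaller, so the $q_{N_n}$-step structure persists and the upper bound via \eqref{eqLE} as an infimum over $m$ is stable), which also makes the upper bound robust since $\inf_m$ only needs one good $m$.
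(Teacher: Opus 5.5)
\textbf{There is a genuine gap: your base profile and step (3).}

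For $v_0=\cos2\pi x$ and $E$ near $0$, the zeros of $E-\lambda v_0$ at $x\approx\pm\frac14$ are transversal. So on the critical windows the scale-$N$ angle (connecting) function has slope $\asymp\lambda$, up to exponentially small errors.

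A uniform drop $c_*\log\lambda$ at $E_n$ needs near-exact cancellation, $|g_{N_n}(E_n,x)|\lesssim\lambda^{-cq_{N_n}}$, on a set of phases of measure $\gtrsim q_{N_n}^{-1}$. Only then does a fixed fraction of every long orbit segment lie in cancelled return blocks. This cannot be arranged with small perturbations:
\begin{itemize}
\item A tangential zero gives such cancellation only on a set of measure $\lesssim\lambda^{-cq_{N_n}/2}$.
\item Turning a zero of slope $\asymp\lambda$ into a tangency forces a change of $v'$ of order $1$.
\item Making $|g|\le\lambda^{-cq}$ on a set of measure $\mu$ forces $\|\delta\|_{C^0}\gtrsim\mu\sim q_{N_n}^{-1}$, since $g$ is monotone with slope $\gtrsim\lambda$ there. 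The required size is thus not $\lambda^{-cq_{N_n}}$.
\item Corrections of amplitude $\gtrsim q^{-1}$ supported on windows of length $\lesssim q^{-1}$ have $G^{\mathfrak s,K}$-norm $\gtrsim q^{-1}\exp(c(K/q)^{-1/(\mathfrak s-1)})$ for any fixed $K$, by Taylor expansion at the support boundary. So your series cannot converge.
\end{itemize}

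Conversely, the exponentially small bumps you do allow keep $v$ $C^2$-close to $\cos2\pi x$. That is precisely the $C^2$ cosine-type class where, as the introduction recalls, large-coupling energy continuity is known. So this scheme cannot produce a discontinuity.

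Two smaller points. Your threshold computation has both inequalities reversed: Gevrey summability forces $\delta_n\ll\exp(-C\ell_n^{-1/(\mathfrak s-1)})$, and the requirement is $e^{-cq\log\lambda}\ll\exp(-Cq^{\tau/(\mathfrak s-1)})$. The two reversals cancel, so you land on the right condition. Also, $c_*$ does not come from ``one of two critical points being killed''. It is the fraction of the orbit covered by cancelled blocks.

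\textbf{The missing idea is to start from a profile that is flat at energy $0$.}

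The paper takes $h(x)=\exp(-|\sin\pi x|^{-\nu})$ with $1/(\mathfrak s-1)<\nu<1/\tau$, and this window is where $\mathfrak s>1+\tau$ comes from:
\begin{itemize}
\item $h\in G^{1+1/\nu}\subset G^{\mathfrak s}$ requires $\nu(\mathfrak s-1)>1$.
\item The zero-energy loss per visit, of order $\rho_n^{-\nu}\lesssim(Q_n^-)^{\tau\nu}$, is $o(Q_n^-)$.
\end{itemize}

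Since $\lambda h\le\lambda e^{-c\rho_n^{-\nu}}$ on $|x|\asymp\rho_n$, a correction of size $e^{-c\rho_n^{-\nu}}$ suffices there. It beats the cutoff cost $e^{C\rho_n^{-\mathfrak b}}$ because $\mathfrak b<\nu$. This correction replaces $-\lambda h$ by $-\int_0^{E_n}\partial_Eg_{I_n}\,du$ on whole plateaus $P_{n,\pm}$ of measure $p_*|I_n|$. The result is $g_{I_n}(E_n,\cdot)\equiv0$ there, while $g_{I_n}(0,\cdot)\le-c\lambda e^{-\beta|x|^{-\nu}}<0$.

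Doing this for one fixed potential and all the energies $E_n$ at once needs tools your sketch does not supply:
\begin{itemize}
\item the first-return factorization, in which any perturbation supported in $I_n$ leaves the open gap blocks unchanged for every $E$ and shifts $g_{I_n}$ affinely;
\item uniform energy transversality $\partial_Eg_{I_n}\approx1$, with Gevrey control of the energy derivatives of the singular angles via the dual-number algebra at a fixed radius $K_\lambda$;
\item the logarithmic interpolant, which keeps the sign on the side buffers;
\item nested supports $\widetilde I_{n+1}\Subset\mathcal K_n$, disjoint from $P_n$ and from every earlier open gap, so that the earlier resonances at $E_m$ are preserved exactly rather than only up to small errors.
\end{itemize}
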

	
	Theorem~\ref{thm1} resolves in the negative the fixed-potential
	energy-continuity question discussed by Jitomirskaya in her ICM address
	\cite[Section~2, p.~1096]{JitomirskayaICM2022}, and provides the
	fixed-potential counterexamples sought in \cite[Problem~4]{LTY2025};
	see also You \cite{YouProblems2026} for a recent survey of open
	problems in the theory of quasiperiodic Schr\"odinger operators.
	
	Every \(t>\tau_*(\alpha)\) is admissible for \(\alpha\). Applying
	Theorem~\ref{thm1} with any
	\(\tau\in(\tau_*(\alpha),\mathfrak s-1)\) therefore yields fixed-potential
	energy discontinuity at large coupling whenever
	\begin{equation*}
		\mathfrak s>1+\tau_*(\alpha).
	\end{equation*}
	In particular, \(\tau_*(\alpha)=1\) whenever \(q_{n+1}\le Cq_n(\log q_n)^A\)
	for some constants \(C,A>0\) and all sufficiently large \(n\), a class
	that includes all bounded-type frequencies as well as a set of full
	Lebesgue measure (for \(A>1\)).
	
	\subsection*{Context and related results}
	
	For real-analytic potentials, Bourgain--Jitomirskaya
	\cite{BourgainJitomirskaya} proved that the Lyapunov exponent of a
	one-frequency Schr\"odinger operator is jointly continuous in energy and
	irrational frequency. Subsequent work extended continuity to general
	analytic cocycles, including singular and higher-rank models
	\cite{JitomirskayaKosloverSchulteis,JitomirskayaMarx,
		AvilaJitomirskayaSadel,DuarteKlein}, while quantitative moduli of
	continuity under arithmetic and positivity assumptions were obtained in
	\cite{GoldsteinSchlag,YouZhang}.
	
	Below analyticity, continuity for general one-frequency Gevrey cocycles
	over Diophantine rotations is known in the regime
	\begin{equation}\label{eq3}
		1<\mathfrak s<2.
	\end{equation}
	For strongly Diophantine frequencies, Klein \cite{Klein2005} proved
	energy continuity in the range \eqref{eq3}
	under a uniform positivity assumption on the Lyapunov exponent, as well
	as for arbitrary Gevrey index \(\mathfrak s>1\) at large coupling under
	a transversality condition. Unconditional continuity for general
	\(\mathrm{SL}(2,\R)\)-valued \(G^{\mathfrak s}\) cocycles in the range
	\eqref{eq3} was established by
	Cheng--Ge--You--Zhou \cite[Theorem~6.3]{ChengGeYouZhou}. Wang
	\cite[Theorem~1.1]{XueyinWang2026} extended this result to frequencies
	in a subexponential Brjuno class satisfying
	\(\log q_{n+1}\leq C(\alpha,\eta)q_n^\eta\) with \(0<\eta<2-\mathfrak s\).
	Since every Diophantine frequency satisfies this growth condition for
	all \(\eta>0\) and \(A_{v,E}-A_{v,E_0}=(E-E_0)E_{11}\), energy
	continuity holds for every fixed \(v\in G^{\mathfrak s}(\T,\R)\) when
	\(1<\mathfrak s<2\). See also Powell \cite{PowellGevrey2025} for direct
	Gevrey large-deviation theorems and continuity consequences in an
	arithmetic-dependent subrange.
	
	The restriction \(\mathfrak s<2\) in the Fourier approximation method
	comes from the relation between truncation error and complex strip width.
	A cutoff at frequency \(N^{b\mathfrak s}\) gives an error
	\(O(e^{-cN^b})\) and a trigonometric polynomial \(v_N\) with
	\[
	\deg v_N\leq C N^{b\mathfrak s},\qquad
	\text{strip width}\asymp N^{-b(\mathfrak s-1)}.
	\]
	The large-deviation argument requires \(b>1\) and
	\(b(\mathfrak s-1)<1\), which are compatible precisely when
	\(\mathfrak s<2\). Restricting parameter variation to the energy line
	does not change this loss in the phase variable.
	
	Wang--You \cite{WangYou,WangYou2018}
	constructed examples of discontinuity of the Lyapunov exponent in the
	\(C^\infty\) cocycle topology. For bounded-type frequencies,
	Ge--Wang--You--Zhao \cite{GWYZ} refined this mechanism to obtain
	discontinuity in the \(G^{\mathfrak s}\) topology for \(\mathfrak s>2\).
	Liang--Tao--You \cite{LTY2025} incorporated the Diophantine
	exponent \(\tau\) and constructed discontinuity examples in
	\(G^{\mathfrak s}\) for \(\mathfrak s>1+\tau\) under \eqref{eq2}.
	In these constructions, the cocycle or potential varies at a fixed
	energy. In \cite{LTY2025}, the authors explicitly
	distinguished continuity in the cocycle topology (Problem~2) from energy
	continuity for a fixed potential (Problem~4). Theorem~\ref{thm1}
	reaches the same arithmetic range \(\mathfrak s>1+\tau\) while
	holding the potential \(v\) fixed and varying only the spectral energy
	\(E\). (In the topologically distinct setting of limit-periodic
	potentials, fixed-potential energy discontinuity was obtained by
	Damanik--Gan--Kr\"uger \cite{GanKrueger}.)
	
	When additional geometric structure is imposed on the potential,
	continuity can hold beyond \(\mathfrak s=2\): besides Klein's transverse
	class, large-coupling \(C^2\) cosine-type potentials with two
	nondegenerate critical points admit continuity and quantitative regularity
	results under the respective arithmetic assumptions
	\cite{WangZhang,LiangFu,LiangWangYou,XuGeWang}. For
	arbitrary fixed Gevrey potentials over a Diophantine rotation, the
	preceding results give the following ranges:
	\begin{equation*}
		\begin{array}{c|c}
			1<\mathfrak s<2
			& \text{continuity for every fixed }v\in G^{\mathfrak s}(\T,\R),\\[2mm]
			2\leq\mathfrak s\leq1+\tau_*(\alpha)
			& \text{not settled by the cited results},\\[2mm]
			\mathfrak s>1+\tau_*(\alpha)
			& \text{discontinuity examples at large coupling (Theorem~\ref{thm1}).}
		\end{array}
	\end{equation*}
	When \(\tau_*(\alpha)=1\), only the endpoint \(\mathfrak s=2\) remains
	uncovered by these results. By \eqref{eqLE}, \(E\mapsto L_v(E)\) is upper
	semicontinuous and nonnegative, so it is automatically continuous
	wherever \(L_v(E)=0\) as well as on the uniformly hyperbolic resolvent
	set. Any discontinuity must therefore occur at a spectral energy with
	strictly positive Lyapunov exponent.
	
	\subsection*{Strategy of the proof}
	
	Passing from potential-variation discontinuity \cite{WangYou,GWYZ,LTY2025}
	to fixed-potential energy discontinuity introduces an additional
	difficulty: varying \(E\) changes the transfer matrix at every orbit
	point, since \(A_{v,E}(x)-A_{v,0}(x)=E\,E_{11}\). The open return-gap
	products \(B_{I_n}^\pm(E,x;v)\), and hence their singular values and
	directions, therefore depend on \(E\).
	
	The construction uses the following return-block estimates and perturbations:
	
	\smallskip
	\noindent\textbf{1. First-return factorization and affine decoupling.}
	In Section~\ref{sec2}, we factor the return product centered
	at a visit \(x\in I_n\) using the singular value decompositions of the
	adjacent open gap blocks \(B_{I_n}^\pm(E,x;v)\). The coupling between
	the incoming and outgoing expanding singular directions is governed by
	the \((1,1)\)-entry of the rotated connecting matrix
	\(C_{I_n}(E,x;v)=R_{\phi_{I_n}^+}A_{v,E}(x)R_{\theta_{I_n}^-}\).
	Dividing this entry by \(\cos\phi_{I_n}^+\cos\theta_{I_n}^-\) yields the
	normalized connecting function
	\[
	g_{I_n}(E,x;v)=E-\lambda v(x)-\tan\phi_{I_n}^+(E,x)-\tan\theta_{I_n}^-(E,x).
	\]
	By the first-return property, the open gap blocks \(B_{I_n}^\pm(E,x;v)\)
	sample the potential only outside \(I_n\). Hence any perturbation
	\(\delta_n\) compactly supported in \(I_n\) leaves \(B_{I_n}^\pm(E,x;v)\)
	and their singular directions invariant for \emph{all} energies \(E\),
	shifting \(g_{I_n}\) affinely by
	\(g_{I_n}(E,x;v+\delta_n)=g_{I_n}(E,x;v)-\lambda\delta_n(x)\). In
	particular, \(\partial_E g_{I_n}\) is unchanged by \(\delta_n\).
	
	\smallskip
	\noindent\textbf{2. Logarithmic cutoff interpolation and multi-energy resonance.}
	Starting from a flat profile \(h(x)=\exp(-|\sin\pi x|^{-\nu})\) at a
	large initial scale \(n_0(\lambda)\), we inductively construct
	corrections \(\delta_n\) supported in nested subintervals
	\(\widetilde I_n\Subset\mathcal K_{n-1}\subset I_{n-1}\). At scale \(n\),
	\(\delta_n\) serves two purposes:
	\begin{itemize}
		\item On a central core \(\mathcal K_n\subset I_n\), it resets the
		zero-energy connecting function to the model profile,
		\(g_{I_n}(0,x;v_n)=-\lambda h(x)\), preparing the next scale;
		\item On two symmetric side plateaus \(P_{n,\pm}\subset I_n\setminus\mathcal K_n\),
		it enforces an exact resonance \(g_{I_n}(E_n,x;v_n)=0\) at the selected
		energy \(E_n=\lambda h(\vartheta_d\rho_n)\), annihilating the
		expanding-to-expanding entry \((C_{I_n}(E_n,x;v_n))_{11}\) across a
		set of phases of measure proportional to \(|I_n|\).
	\end{itemize}
	To keep the zero-energy Lyapunov exponent
	\(L_{v_\infty}(0)\) close to \(\log\lambda\), \(g_{I_n}(0,x;v_n)\) must
	remain strictly negative (bounded below in magnitude by
	\(c\lambda e^{-\beta|x|^{-\nu}}\)) across all cutoff transition zones.
	On the side buffers, the zero-energy profile \(A_n(x)=\lambda h(x)\)
	and the target energy increment
	\(B_{n,\pm}(x)=\int_0^{E_n}\partial_E g_{I_n}(u,x;v_{n-1})\,du\approx E_n\)
	differ by a large exponential factor. To control the reciprocal
	throughout the transition, Section~\ref{sec6} uses
	the logarithmic interpolant
	\(\mathcal H_{n,\pm}=A_n^{1-\chi_{n,\pm}}B_{n,\pm}^{\chi_{n,\pm}}\).
	The resulting correction preserves the negative sign of the zero-energy
	connecting function and, for \(0\leq E\leq E_{n+1}\), bounds the Gevrey
	norm of the reciprocal of the corrected connecting function on these
	buffers by
	\(\exp(C\rho_n^{-\nu}+C\rho_n^{-\mathfrak b})\).
	
	\smallskip
	\noindent\textbf{3. First-order jet Gevrey calculus and energy transversality.}
	Prescribing the zero \(g_{I_n}(E_n,x;v_n)=0\) via the fundamental
	theorem of calculus requires uniform energy transversality
	\(\partial_E g_{I_n}\approx 1\) on \([0,E_n]\), together with spatial
	Gevrey bounds on \(\partial_E\phi_{I_n}^+\) and
	\(\partial_E\theta_{I_n}^-\). In Section~\ref{sec4}, we
	embed each function and its first energy derivative into the dual-number
	Gevrey Banach algebra
	\(\mathcal G_{\mathfrak s,K}^{(1)}(I)=G^{\mathfrak s,K}(I)\oplus\epsilon G^{\mathfrak s,K}(I)\)
	(\(\epsilon^2=0\)), controlling each return block and its first energy
	derivative simultaneously without an additional loss of Gevrey
	regularity. Combined with the continued-fraction return decomposition of
	Section~\ref{sec5}, this allows us to close the
	multi-scale induction in Sections~\ref{sec7}
	and~\ref{sec8} at a \emph{fixed} Gevrey radius
	\(K_\lambda\), and to derive the two Lyapunov exponent bounds
	in Section~\ref{sec10}.

	\section{First-return factorization and mixed Gevrey calculus}
	\label{sec2}
	
	Throughout the paper, we write \(e_1=(1,0)^{\mathsf T}\). The prefactor
	\(4\pi^2/3\) in Definition~\ref{def1} ensures that
	\(G^{\mathfrak s,K}(I)\) is a Banach algebra under pointwise
	multiplication:
	\begin{equation}\label{eq4}
		|FG|_{\mathfrak s,K;I}
		\leq |F|_{\mathfrak s,K;I}|G|_{\mathfrak s,K;I}.
	\end{equation}
	
	\begin{remark}[Convention on constants]
		\label{rem1}
		Throughout the paper, \(C,c>0\) denote generic positive constants that
		may vary from line to line. Unless stated otherwise, they may depend on
		the fixed Diophantine parameters \((\gamma,\tau)\) and the Gevrey
		regularity exponents, but are always independent of the coupling
		\(\lambda\), the induction scale \(n\), the derivative order \(r\), and
		the number of factors in a matrix product. Structural constants used throughout the induction are denoted by fixed
		symbols
		(\(\beta,c_0,C_0,B_0,C_{\tan}\)).
	\end{remark}
	
	\subsection{Return gap blocks and singular directions}
	\label{sec3}
	
	For an open interval \(J\subset\T\) and a phase \(x\in J\), let
	\begin{equation}\label{eq5}
		\begin{aligned}
			r_J^+(x)&=\min\{k\geq1:T^kx\in J\},\\
			r_J^-(x)&=\min\{k\geq1:T^{-k}x\in J\}
		\end{aligned}
	\end{equation}
	denote the forward and backward first-return times to \(J\). A
	\emph{joint return atom} of \(J\) is a maximal open subinterval of \(J\)
	on which both \(r_J^+\) and \(r_J^-\) are constant. On each such atom,
	the forward and backward \emph{open gap blocks}, excluding the transfer
	matrix at the central visit \(x\), are defined by
	\begin{equation*}
		\begin{split}
			B_J^+(E,x;v)
			&=
			A_{v,E}^{(r_J^+(x)-1)}(Tx),\\
			B_J^-(E,x;v)
			&=
			A_{v,E}^{(r_J^-(x)-1)}
			\bigl(T^{-r_J^-(x)+1}x\bigr).
		\end{split}
	\end{equation*}
	Whenever \(\|B_J^\pm(E,x;v)\|>1\) on a joint return atom, the singular
	values \(\Lambda_J^\pm=\|B_J^\pm\|>1>(\Lambda_J^\pm)^{-1}\) are simple,
	and the block admits the singular value decomposition (SVD)
	\begin{equation}\label{eq6}
		\begin{split}
			B_J^+&=\epsilon_J^+R_{\theta_J^+}D_{\Lambda_J^+}R_{\phi_J^+},\\
			B_J^-&=\epsilon_J^-R_{\theta_J^-}D_{\Lambda_J^-}R_{\phi_J^-},
		\end{split}
		\qquad
		D_\Lambda=\diag(\Lambda,\Lambda^{-1}),
	\end{equation}
	where \(R_\vartheta=\left(\begin{smallmatrix}\cos\vartheta&-\sin\vartheta\\
		\sin\vartheta&\cos\vartheta\end{smallmatrix}\right)\) denotes
	counterclockwise rotation by \(\vartheta\), and
	\(\epsilon_J^\pm\in\{\pm1\}\) are locally constant signs on each
	connected component of the domain. Because singular directions are
	projective lines in \(\mathbb{RP}^1\), we may choose continuous real
	lifts \(\theta_J^\pm,\phi_J^\pm\) of the angles---taking the representative
	closest to \(0\) whenever a singular direction is close to the
	coordinate axes---and absorb the resulting orientation choice into the
	locally constant signs \(\epsilon_J^\pm\in\{\pm1\}\). These signs do not
	affect operator norms, projective actions, or derivatives with respect to
	\((E,x)\).
	
	\begin{lemma}
		\label{lem1}
		Let \(v\in C^\infty(\T,\R)\), let \(\mathcal A\subset J\) be a joint
		return atom, let \(I\Subset\mathcal A\) be a compactly contained
		subinterval, and let \(\mathcal E\subset\R\) be a compact energy
		interval. Suppose that for one of the two gap blocks \(B=B_J^\pm\),
		\[
		\inf_{(E,x)\in\mathcal E\times I}
		\|B(E,x;v)\|\geq1+\gamma_0
		\]
		for some \(\gamma_0>0\). Then the top singular value \(\Lambda=\|B\|\)
		and any continuous lifts of the singular angles admit \(C^\infty\)
		branches on \(\mathcal E\times I\).
	\end{lemma}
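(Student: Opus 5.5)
Since \(r_J^\pm\) are constant on the joint return atom \(\mathcal A\), say equal to \(m\), the gap block on \(\mathcal E\times I\) is a fixed finite product
\[
B(E,x;v)=A_{v,E}(T^{m-1}x')\cdots A_{v,E}(x'),\qquad x'=Tx \text{ or } T^{-m+1}x,
\]
of matrices whose entries are polynomial in \(E\) and in the values \(v(x+k\alpha)\). The plan is therefore first to record that \((E,x)\mapsto B(E,x;v)\) is \(C^\infty\) on a neighbourhood of \(\mathcal E\times I\); by compactness of \(I\Subset\mathcal A\) we may enlarge slightly to an open set \(U\supset\mathcal E\times I\) on which \(\|B\|>1+\gamma_0/2\) still holds, by uniform continuity. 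Then reduce everything to the symmetric matrix \(M=B^{\mathsf T}B\), which is \(C^\infty\) in \((E,x)\), positive definite, with \(\det M=1\).

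The eigenvalues of \(M\) are \(\Lambda^{\pm2}\), roots of \(\mu^2-(\operatorname{tr}M)\mu+1=0\). Hence
\[
\Lambda^2=\tfrac12\bigl(\operatorname{tr}M+\sqrt{(\operatorname{tr}M)^2-4}\bigr),
\]
and \((\operatorname{tr}M)^2-4=(\Lambda^2-\Lambda^{-2})^2\geq\bigl((1+\gamma_0/2)^2-(1+\gamma_0/2)^{-2}\bigr)^2>0\) on \(U\). Since the square root is smooth away from \(0\), \(\Lambda=\|B\|\) is \(C^\infty\). Simplicity of the spectrum, which is this uniform gap, is the only real input.

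For the angles, the right singular direction is the top eigenline of \(M\), given by the spectral projector \(P=(M-\Lambda^{-2}\Id)/(\Lambda^2-\Lambda^{-2})\). This is \(C^\infty\) because the denominator is bounded below. The projector determines the line in \(\mathbb{RP}^1\) smoothly, and since \(P=\begin{pmatrix}\cos^2\phi&\cdot\\ \cdot&\sin^2\phi\end{pmatrix}\) up to the conventions of \eqref{eq6}, the map \((E,x)\mapsto 2\phi\in\R/2\pi\Z\) is \(C^\infty\) via \((\cos2\phi,\sin2\phi)=(2P_{11}-1,2P_{21})\). Likewise the left singular direction is \(B u_{\max}/\Lambda\), or the top eigenline of \(BB^{\mathsf T}\), and is smooth by the same argument.

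Finally, \(\mathcal E\times I\) is simply connected (a product of intervals). So any continuous map to \(\R/\pi\Z\) (the projective line) has continuous lifts to \(\R\). Any such lift differs from a smooth local lift by a locally constant multiple of \(\pi\), since the fibres are discrete. Hence every continuous lift is automatically \(C^\infty\), and the sign ambiguity is absorbed into \(\epsilon_J^\pm\) as stated.

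The only delicate point I expect is this bookkeeping between projective directions and real lifts with sign conventions, namely checking that the factorization \eqref{eq6} with the chosen lifts and locally constant \(\epsilon\) is consistent. Showing that both \(\theta\) and \(\phi\) are simultaneously determined by the smooth eigenprojectors, up to \(\pi\)-ambiguity, handles it.
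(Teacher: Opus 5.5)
Your argument is correct and follows the same route as the paper. It uses the fixed finite product of smooth matrices, then the uniform spectral gap of \(B^{\mathsf T}B\) to get a smooth \(\Lambda\) and a smooth eigenprojector, then contractibility of \(\mathcal E\times I\) for the lifts; your double-angle reading of the projector and your remark that a continuous lift differs locally from a smooth one by a constant in \(\pi\Z\) take the place of the paper's smooth unit section and frame completion.

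Two harmless slips should be fixed. When \(r_J^\pm=m\) the block has \(m-1\) factors, not \(m\). With the convention \eqref{eq6} the top right singular vector is \(R_{-\phi}e_1\), so \(\sin2\phi=-2P_{21}\) rather than \(2P_{21}\).
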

	
	\begin{proof}
		Since \(r_J^\pm\) is constant on \(\mathcal A\), \(B(E,x;v)\) is a fixed
		finite product of smooth \(\mathrm{SL}(2,\R)\)-valued maps. The symmetric
		positive-definite matrix \(Q=B^{\mathsf T}B\) has eigenvalues
		\(\Lambda^2\) and \(\Lambda^{-2}\) separated by a uniform spectral gap
		\(\Lambda^2-\Lambda^{-2}\geq(1+\gamma_0)^2-(1+\gamma_0)^{-2}>0\). Thus
		\(\Lambda\) and the orthogonal eigenprojection
		\[
		\Pi_+=\frac{Q-\Lambda^{-2}\Id}{\Lambda^2-\Lambda^{-2}}
		\]
		depend smoothly on \((E,x)\in\mathcal E\times I\). Since the rectangle
		\(\mathcal E\times I\) is contractible, the rank-one bundle
		\(\operatorname{ran}\Pi_+\) admits a smooth unit section \(u(E,x)\).
		Completing \(u\) and \(Bu/\Lambda\) to positively oriented orthonormal
		frames yields smooth SVD angles on \(\mathcal E\times I\), with
		\(\epsilon_J^\pm\in\{\pm1\}\) locally constant according to the chosen
		lifts.
	\end{proof}
	
	Substituting the factorizations \eqref{eq6} into the two-gap
	return product centered at \(x\in J\) gives
	\begin{equation*}
		\begin{split}
			B_J^+(E,x;v)A_{v,E}(x)B_J^-(E,x;v)
			={}&\epsilon_J^+\epsilon_J^-R_{\theta_J^+}D_{\Lambda_J^+}
			C_J(E,x;v)
			D_{\Lambda_J^-}R_{\phi_J^-},\\
			C_J(E,x;v)
			={}&R_{\phi_J^+}A_{v,E}(x)R_{\theta_J^-}.
		\end{split}
	\end{equation*}
	The \((1,1)\)-entry of the connecting matrix \(C_J(E,x;v)\) measures the
	coupling between the expanding output direction of \(B_J^-\) under
	\(A_{v,E}(x)\) and the expanding input direction of \(B_J^+\).
	
	\begin{lemma}
		\label{lem2}
		Suppose that on a domain in \(\mathcal E\times\mathcal A\),
		\begin{equation}\label{lem2a}
			|\cos\phi_J^+(E,x)\cos\theta_J^-(E,x)|\geq c_0>0.
		\end{equation}
		Set \(a_J(E,x;v):=(C_J(E,x;v))_{11}\) and define the normalized
		connecting function
		\begin{equation}\label{lem2b}
			g_J(E,x;v)
			:=
			\frac{a_J(E,x;v)}
			{\cos\phi_J^+(E,x)\cos\theta_J^-(E,x)}.
		\end{equation}
		Then
		\begin{equation}\label{lem2c}
			g_J(E,x;v)
			=E-\lambda v(x)
			-\tan\phi_J^+(E,x)-\tan\theta_J^-(E,x).
		\end{equation}
		In particular, \(a_J(E,x;v)=0\) if and only if \(g_J(E,x;v)=0\).
	\end{lemma}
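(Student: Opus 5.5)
The identity \eqref{lem2c} is a direct matrix computation, so I will expand the \((1,1)\)-entry of \(C_J=R_{\phi_J^+}A_{v,E}(x)R_{\theta_J^-}\) explicitly and then divide by the nonvanishing factor guaranteed by \eqref{lem2a}. Write \(\phi=\phi_J^+(E,x)\), \(\theta=\theta_J^-(E,x)\) and \(a=E-\lambda v(x)\).

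First I compute \(A_{v,E}(x)R_\theta e_1\). Since \(R_\theta e_1=(\cos\theta,\sin\theta)^{\mathsf T}\), applying \(A_{v,E}(x)\) gives
\[
A_{v,E}(x)R_\theta e_1=\bigl(a\cos\theta-\sin\theta,\ \cos\theta\bigr)^{\mathsf T}.
\]
Next I take the first row of \(R_\phi\), which is \((\cos\phi,-\sin\phi)\). This yields
\[
a_J=e_1^{\mathsf T}R_\phi A_{v,E}(x)R_\theta e_1
=\cos\phi\,(a\cos\theta-\sin\theta)-\sin\phi\cos\theta .
\]
On the domain where \eqref{lem2a} holds, \(\cos\phi\cos\theta\neq0\), so I can divide term by term:
\[
\frac{a_J}{\cos\phi\cos\theta}=a-\tan\theta-\tan\phi .
\]
This is exactly \eqref{lem2c}. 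The equivalence \(a_J=0\iff g_J=0\) then follows at once, since the denominator has modulus at least \(c_0>0\).

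I see no genuine obstacle here. The only point needing care is conventions. The rotation \(R_\vartheta\) must be counterclockwise, as fixed before \eqref{eq6}, which is what makes the sign in front of \(\sin\phi\) negative. The identity must also be read pointwise for the chosen continuous lifts from Lemma~\ref{lem1}. A change of lift by \(\pi\) flips the sign of \(\cos\) but leaves \(\tan\) unchanged. It therefore alters \(a_J\) and the normalizing denominator by the same sign, leaving \(g_J\) well defined and independent of the locally constant signs \(\epsilon_J^\pm\).
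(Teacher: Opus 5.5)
Your proposal is correct and follows essentially the same route as the paper. Both compute \((R_\phi A_{v,E}(x)R_\theta)_{11}=(E-\lambda v(x))\cos\phi\cos\theta-\sin(\phi+\theta)\) by direct multiplication and then divide by the cosine product, which is nonzero by \eqref{lem2a}. Your comment on lift independence matches the remark the paper places right after the lemma.
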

	
	\begin{proof}
		Direct matrix multiplication yields
		\[
		(R_\phi A_{v,E}(x)R_\theta)_{11}
		=(E-\lambda v(x))\cos\phi\cos\theta-\sin(\phi+\theta).
		\]
		Dividing both sides by \(\cos\phi\cos\theta\) and using the identity
		\(\sin(\phi+\theta)/(\cos\phi\cos\theta)=\tan\phi+\tan\theta\) gives
		\eqref{lem2c}.
	\end{proof}
	
	\begin{remark}
		Shifting either \(\phi_J^+\) or \(\theta_J^-\) by an integer multiple of
		\(\pi\) flips the signs of both \(a_J\) and
		\(\cos\phi_J^+\cos\theta_J^-\) simultaneously, leaving \(g_J(E,x;v)\)
		and \eqref{lem2c} independent of the choice of angle lifts.
	\end{remark}
	
	\subsection{Affine response under localized perturbations}
	
	Formula \eqref{lem2c} makes the affine response to a perturbation
	inside the return base \(J\) explicit.
	
	\begin{lemma}
		\label{lem3}
		Let \(\delta\in C^0(\T,\R)\) satisfy \(\supp\delta\Subset J\). Then for
		every \(x\in J\) and every \(E\in\R\),
		\begin{equation}\label{lem3a}
			B_J^\pm(E,x;v+\delta)=B_J^\pm(E,x;v).
		\end{equation}
		Consequently, on every joint return atom of \(J\) where
		\eqref{lem2a} holds,
		\begin{equation}\label{lem3b}
			g_J(E,x;v+\delta)
			=g_J(E,x;v)-\lambda\delta(x).
		\end{equation}
	\end{lemma}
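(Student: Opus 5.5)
The plan is to treat \eqref{lem3a} as a purely combinatorial consequence of the first-return property, and then to obtain \eqref{lem3b} by substituting it into Lemma~\ref{lem2}.

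\textbf{Step 1: the open gap blocks never see \(J\).} Fix \(x\in J\) and set \(r=r_J^+(x)\). By definition,
\[
B_J^+(E,x;v)=A_{v,E}(T^{r-1}x)\cdots A_{v,E}(Tx),
\]
so the potential is sampled only at the points \(T^kx\) with \(1\le k\le r-1\). By minimality of the first return time in \eqref{eq5}, none of these points lies in \(J\). Since \(\supp\delta\subset J\), we get \(\delta(T^kx)=0\) for these \(k\), and therefore
\[
A_{v+\delta,E}(T^kx)=A_{v,E}(T^kx)
\]
factor by factor, for every \(E\). The backward block is handled the same way: it samples only the points \(T^{-k}x\) with \(1\le k\le r_J^-(x)-1\), which again avoid \(J\). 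If the return time is \(1\), the block is the empty product \(\Id\), and the identity holds trivially.

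The return times themselves depend only on \(\alpha\) and \(J\), not on the potential, so the joint return atoms are identical for \(v\) and \(v+\delta\). This proves \eqref{lem3a}.

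\textbf{Step 2: the singular data coincide.} Because the matrices \(B_J^\pm\) agree as functions of \((E,x)\), their singular values \(\Lambda_J^\pm\) and singular directions coincide. We may therefore use the same continuous lifts \(\theta_J^\pm,\phi_J^\pm\) (via Lemma~\ref{lem1}) and the same signs \(\epsilon_J^\pm\) for both potentials. Condition \eqref{lem2a} involves only these angles, so it holds for \(v+\delta\) exactly where it holds for \(v\).

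\textbf{Step 3: the affine shift.} Apply formula \eqref{lem2c} to both \(v\) and \(v+\delta\). The terms \(E\), \(\tan\phi_J^+\) and \(\tan\theta_J^-\) are common to both, and \(\lambda(v+\delta)(x)=\lambda v(x)+\lambda\delta(x)\). Subtracting gives \eqref{lem3b}.

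\textbf{The one delicate point.} The only subtlety is the choice of angle lifts. This is harmless for two reasons. First, the matrices coincide, so identical lifts can be chosen. Second, by the remark following Lemma~\ref{lem2}, \(g_J\) is independent of the lift anyway.
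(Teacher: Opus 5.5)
Your proposal is correct and follows essentially the same route as the paper. Both arguments observe that the open gap blocks sample the potential only at the intermediate orbit points \(T^{\pm k}x\), \(1\leq k<r_J^\pm(x)\), which lie outside \(J\), so the blocks and their SVD angles are unchanged; substituting into \eqref{lem2c} then gives the affine shift \(-\lambda\delta(x)\). Your explicit remarks on empty products, potential-independent return atoms, and lift-independence are harmless additions to the paper's shorter proof.
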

	
	\begin{proof}
		By the definition \eqref{eq5} of the first-return times, the
		intermediate orbit points \(T^{\pm k}x\) for \(1\leq k<r_J^\pm(x)\) lie
		outside \(J\), where \(\delta\) vanishes identically. Thus every factor
		in \(B_J^\pm(E,x;v)\) is unchanged by adding \(\delta\), proving
		\eqref{lem3a}. Since the SVD angles \(\phi_J^+\) and
		\(\theta_J^-\) depend only on \(B_J^\pm\), they are likewise unchanged,
		so \eqref{lem2c} yields \eqref{lem3b}.
	\end{proof}
	
	\begin{remark}
		When comparing two nested return bases \(J_{n+1}\subset J_n\) at the same
		potential \(v\) and point \((E,x)\in\R\times J_{n+1}\), subtracting
		\eqref{lem2c} at the two scales eliminates the explicit potential
		term \(E-\lambda v(x)\), leaving only differences of the tangents of
		the singular angles:
		\[
		g_{J_{n+1}}(E,x;v)-g_{J_n}(E,x;v)
		=-\bigl(\tan\phi_{J_{n+1}}^+-\tan\phi_{J_n}^+\bigr)
		-\bigl(\tan\theta_{J_{n+1}}^--\tan\theta_{J_n}^-\bigr).
		\]
		Uniform lower bounds on \(\cos\phi_{J_k}^+\) and \(\cos\theta_{J_k}^-\)
		at both scales convert Gevrey estimates for the angle increments into
		estimates for these tangent differences.
	\end{remark}
	
	\subsection{Dual-number Gevrey Banach algebras and two-block concatenation}
	\label{sec4}
	
	We will use the following elementary factorial bounds: for any \(a>0\),
	\(d\geq0\), and \(y\geq0\),
	\begin{align}
		\sup_{r\geq0}\frac{(1+r)^2y^r}{(r!)^a}
		&\leq C(a)\exp\bigl\{C(a)y^{1/a}\bigr\},
		\label{eq7a}\\
		\sum_{r\geq0}\frac{(1+r)^dy^r}{(r!)^a}
		&\leq C(a,d)\exp\bigl\{C(a,d)y^{1/a}\bigr\}.
		\label{eq7b}
	\end{align}
	Indeed, for \(r\geq M:=\lceil(2^{d+1}y)^{1/a}\rceil\), the ratio of
	consecutive terms in \eqref{eq7b} is at most \(1/2\),
	so the tail is dominated by the \(M\)-th term. The first \(M+1\)
	terms are controlled by \eqref{eq7a}.
	
	To control functions of \((E,x)\) together with their first energy
	derivative on a parameter rectangle \(\mathcal R=\mathcal E\times I\), we
	define the mixed norm
	\begin{equation}\label{eq8}
		|F|_{\mathfrak s,K;1;\mathcal R}
		:=
		\sup_{E\in\mathcal E}
		\bigl(
		|F(E,\cdot)|_{\mathfrak s,K;I}
		+|\partial_EF(E,\cdot)|_{\mathfrak s,K;I}
		\bigr).
	\end{equation}
	When \(I\) is a finite disjoint union of open subintervals, the same notation denotes the maximum of the component
	norms. When the domain \(\mathcal R\) is clear from the context, we
	abbreviate \(|F|_{\mathfrak s,K;1;\mathcal R}\) as
	\(|F|_{\mathfrak s,K;1}\).
	
	Algebraically, \eqref{eq8} is the uniform norm on
	\(\mathcal E\) with values in the first-order jet (dual-number) Banach
	algebra
	\begin{equation*}
		\mathcal G_{\mathfrak s,K}^{(1)}(I)
		:=G^{\mathfrak s,K}(I)
		\oplus\epsilon G^{\mathfrak s,K}(I),
		\qquad \epsilon^2=0,
	\end{equation*}
	equipped with the norm
	\(|f+\epsilon g|_{\mathfrak s,K;1}:=|f|_{\mathfrak s,K;I}+|g|_{\mathfrak s,K;I}\)
	under the jet embedding
	\(F(E,\cdot)\mapsto F(E,\cdot)+\epsilon\partial_EF(E,\cdot)\). The next
	two lemmas show that analytic composition and hyperbolic two-block
	concatenation preserve \(\mathcal G_{\mathfrak s,K}^{(1)}(I)\) with the
	same Gevrey parameter \(K\).
	
	\begin{lemma}
		\label{lem4}
		Let \(\mathfrak s>1\), \(K\geq1\), and let \(I\subset\R\) be an interval.
		\begin{enumerate}
			\item For scalar- or matrix-valued functions, both \(G^{\mathfrak s,K}(I)\)
			and \(\mathcal G_{\mathfrak s,K}^{(1)}(I)\) are Banach algebras.
			\item Let \(F\in G^{\mathfrak s,K}(I)\) take values in a compact subset
			\(\mathcal K\) of the domain of a real-analytic map \(\Psi\). Then
			\(\Psi\circ F\in G^{\mathfrak s,K}(I)\) with the \emph{same}
			parameter \(K\), and \(|\Psi\circ F|_{\mathfrak s,K;I}\) is
			bounded by a constant depending only on \(\mathfrak s\), a complex
			neighborhood of \(\mathcal K\) on which \(\Psi\) extends
			holomorphically, the supremum of that extension, and
			\(|F|_{\mathfrak s,K;I}\). The same property
			holds in \(\mathcal G_{\mathfrak s,K}^{(1)}(I)\).
			\item In particular, inversion, square roots, logarithms, local SVD maps
			on the locus of simple singular values, and the trigonometric
			functions \(\tan\) and \(\sec^2\) on any compact interval avoiding
			odd multiples of \(\pi/2\) preserve \(\mathcal G_{\mathfrak s,K}^{(1)}(I)\)
			with the same parameter \(K\).
		\end{enumerate}
	\end{lemma}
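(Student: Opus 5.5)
Part (1) will be proved first in the scalar case, then extended to matrices and to dual numbers. For $F,G\in G^{\mathfrak s,K}(I)$, Leibniz gives $\partial^r(FG)=\sum_{j}\binom rj\partial^jF\,\partial^{r-j}G$. I write $\|\partial^jF\|\le |F|\,\frac{3}{4\pi^2}K^j(j!)^{\mathfrak s}(1+j)^{-2}$ and the analogous bound for $G$. Since $\binom rj (j!)^{\mathfrak s}((r-j)!)^{\mathfrak s}\le (r!)^{\mathfrak s}$ for $\mathfrak s\ge1$, the problem reduces to the discrete convolution inequality
\[
\sum_{j=0}^r\frac{(1+r)^2}{(1+j)^2(1+r-j)^2}\le \frac{4\pi^2}{3}.
\]
To prove it, split at $j\le r/2$ and use $(1+r)\le 2(1+r-j)$ there (and symmetrically on the other half). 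Each half is then bounded by $4\sum_{k\ge1}k^{-2}=4\pi^2/6$, and the two halves together give exactly $4\pi^2/3$. This is the reason for the normalizing constant.

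For matrix-valued functions, I will take the norm of $\partial^r F$ in operator norm. Submultiplicativity of the operator norm then lets the same computation go through verbatim. For the dual numbers, compute $(f+\epsilon g)(f'+\epsilon g')=ff'+\epsilon(fg'+gf')$. Its norm is $|ff'|+|fg'+gf'|\le |f||f'|+|f||g'|+|g||f'|\le(|f|+|g|)(|f'|+|g'|)$. Completeness of both spaces is standard, since the norm is a weighted sup of $C^0$ norms.

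For part (2), the main obstacle is composition with the same $K$. My plan is to avoid Faà di Bruno and instead use holomorphic power series expansion together with the algebra property. Suppose first that $\mathcal K$ is small, contained in a disc $D(w_0,\rho)$ with $\Psi$ holomorphic and bounded by $M$ on $D(w_0,2\rho)$. Then $\Psi(F)=\sum_k a_k (F-w_0)^k$ with $|a_k|\le M(2\rho)^{-k}$, and the series converges in $G^{\mathfrak s,K}$ provided $|F-w_0|_{\mathfrak s,K}<2\rho$. The difficulty is that the Gevrey norm of $F-w_0$ exceeds its sup norm, so smallness of the range alone does not give convergence.

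To fix this, I rescale. Replace $|\cdot|_{\mathfrak s,K}$ by the equivalent weighted norm $\|F\|_\mu:=\frac{4\pi^2}{3}\sup_r \frac{(1+r)^2\mu^{r}}{K^r(r!)^{\mathfrak s}}\|\partial^rF\|$ with $\mu\in(0,1]$; the same Leibniz computation shows this is also an algebra norm. The price is that $\mu<1$ enlarges the effective radius to $K/\mu$. I expect that losing $K$ is not acceptable here, so a more careful argument is needed. I will localize in $x$ instead: cover $I$ by subintervals on which the oscillation of $F$ is small, and expand about the local value $F(x_0)$. This handles the $r=0$ term. For the $r\ge1$ terms, I will use the fact that only derivatives of $(F-F(x_0))^k$ appear, and these involve at least one derivative. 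I would estimate them through the inductive Faà di Bruno structure, using Cauchy estimates $|\Psi^{(k)}|\le Mk!\rho^{-k}$ together with the Gevrey bound $\|F^{(j)}\|\le C|F|K^j(j!)^{\mathfrak s}$. The standard majorant-series argument (compare with $\Psi$ composed with the majorant function $\sum_j \frac{|F| K^j t^j}{(j!)^{\mathfrak s-1}}$-type sums) then gives $\|\partial^r(\Psi\circ F)\|\le C K^r (r!)^{\mathfrak s}(1+r)^{-2}$, with $C$ depending on $M,\rho,\mathfrak s,|F|$. This works because the set-partition count in Faà di Bruno is absorbed by $(r!)^{\mathfrak s-1}$ slack when $\mathfrak s>1$, and the $(1+r)^2$ weights are handled by the convolution inequality above. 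This bookkeeping is the step I expect to be hardest.

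For the dual-number version, use $\Psi(f+\epsilon g)=\Psi(f)+\epsilon\Psi'(f)g$, and apply the scalar result to both $\Psi$ and $\Psi'$ together with the algebra property. Part (3) then follows by applying (2) to the specific holomorphic maps: $w\mapsto 1/w$, $\sqrt w$ and $\log w$ away from $0$ or the negative axis, $\tan$ and $\sec^2$ away from odd multiples of $\pi/2$, and the SVD maps. The SVD maps are real-analytic on the simple-spectrum locus by the eigenprojection formula in Lemma~\ref{lem1}.
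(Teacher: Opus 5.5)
Your Part~(1) is correct and follows the paper's argument: Leibniz, $\binom rj(j!)^{\mathfrak s}((r-j)!)^{\mathfrak s}\le(r!)^{\mathfrak s}$, and the weighted convolution bound. Your half-splitting gives exactly the needed constant $4\pi^2/3$, where the paper states the sharper $2\pi^2/3$. The matrix and dual-number extensions are also fine.

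The gap is in Part~(2), which is the real content of the lemma. You end up on the paper's route (Fa\`a di Bruno plus Cauchy estimates), but you never prove the estimate that keeps $K$ fixed with no smallness assumption on $|F|_{\mathfrak s,K;I}$. The devices you propose for it do not work as stated.
Localizing and expanding about $F(x_0)$ gains nothing. The Cauchy bounds $\|D^j\Psi(F(x))\|\le Mj!\,d^{-j}$ are pointwise and only use $F(x)\in\mathcal K$. Meanwhile the $r\ge1$ part of $|F-F(x_0)|_{\mathfrak s,K}$ does not become small on a short interval.

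The series $\sum_j|F|K^jt^j/(j!)^{\mathfrak s-1}$ is not a majorant of $F$. The Taylor coefficients only obey $\|\partial^kF\|/k!\le C|F|K^k(k!)^{\mathfrak s-1}(1+k)^{-2}$, with the factorial in the numerator. An honest majorant comparison therefore yields a divergent formal series whose coefficients you would still have to estimate, which is the whole problem. If instead you majorize products crudely, discarding the factors $\binom rj^{1-\mathfrak s}$, you are back to $\sum_k|a_k|\,|F-w_0|_{\mathfrak s,K}^k$ and the smallness obstruction you identified yourself.

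The missing idea is the quantitative form of your ``$(r!)^{\mathfrak s-1}$ slack''. Multiply the $j$-th Fa\`a di Bruno term by $(1+r)^2/(K^r(r!)^{\mathfrak s})$. Then rewrite the sum over multiplicities $m_k$ as a sum over ordered compositions $k_1+\cdots+k_j=r$, $k_i\ge1$; this absorbs the factor $j!/\prod_km_k!$ coming from the Cauchy estimate. After pulling out $M(C|F|_{\mathfrak s,K;I}/d)^j$, what is needed is
\[
(1+r)^2\sum_{k_1+\cdots+k_j=r}\Bigl(\frac{k_1!\cdots k_j!}{r!}\Bigr)^{\mathfrak s-1}\prod_{i=1}^{j}(1+k_i)^{-2}\leq\frac{C^{j}}{(j!)^{\mathfrak s-1}} .
\]
Log-convexity of the factorial gives $k_1!\cdots k_j!\le(r-j+1)!\le r!/j!$, so the factorial ratio is at most $(j!)^{1-\mathfrak s}$. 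Iterating your two-term convolution inequality $j-1$ times bounds the weighted count of compositions by $(4\pi^2/3)^{j-1}(1+r)^{-2}$. Consequently
\[
|\Psi\circ F|_{\mathfrak s,K;I}\le CM\sum_{j\ge0}(C|F|_{\mathfrak s,K;I}/d)^j(j!)^{1-\mathfrak s},
\]
which converges for every value of $|F|_{\mathfrak s,K;I}$ exactly because $\mathfrak s>1$. The two mechanisms are:
\begin{itemize}
\item the weights $(1+r)^2$ control the number of compositions uniformly in $r$, so $K$ is untouched;
\item the gain $(j!)^{1-\mathfrak s}$ beats the geometric growth in $j$, so no smallness is needed.
\end{itemize}
For the SVD maps and other functions of several entries you also need the multivariate version, with $D^j\Psi$ treated as multilinear forms; the same combinatorics go through.
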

	
	\begin{proof}
		By the Leibniz rule and the elementary inequality
		\[
		\sup_{r\geq0}\sum_{j=0}^r
		\frac{(1+r)^2}{(1+j)^2(1+r-j)^2}
		\binom rj^{1-\mathfrak s}
		\leq4\sum_{k\geq1}k^{-2}=\frac{2\pi^2}{3}<\frac{4\pi^2}{3},
		\]
		the norm \eqref{def1a} satisfies
		\eqref{eq4}. For the dual-number algebra
		\(\mathcal G_{\mathfrak s,K}^{(1)}(I)\), submultiplicativity follows from
		\(|(f_1+\epsilon g_1)(f_2+\epsilon g_2)|_{\mathfrak s,K;1}
		\leq |f_1f_2|+|f_1g_2|+|g_1f_2|
		\leq(|f_1|+|g_1|)(|f_2|+|g_2|)\).
		
		For part~(2), choose \(d>0\) and \(M<\infty\) such that \(\Psi\) extends
		holomorphically to the complex \(d\)-neighborhood of \(\mathcal K\) with
		\(\|\Psi\|\leq M\). Cauchy's estimates give
		\(\|D^j\Psi(\mathcal K)\|\leq Md^{-j}j!\) for all \(j\geq0\). In
		Fa\`a di Bruno's formula for \(\partial_x^r(\Psi\circ F)\), a partition
		of \(r\) has multiplicities \(m_1,\ldots,m_r\) satisfying
		\(\sum_{k=1}^r km_k=r\) and \(\sum_{k=1}^r m_k=j\). The required estimate
		then reduces to the combinatorial convolution
		\[
		(1+r)^2
		\sum_{\substack{m_1+2m_2+\cdots+rm_r=r\\
				m_1+\cdots+m_r=j}}
		\frac{j!}{\prod_km_k!}
		\left(\frac{\prod_k(k!)^{m_k}}{r!}\right)^{\mathfrak s-1}
		\prod_k(1+k)^{-2m_k}
		\leq\frac{C(\mathfrak s)^j}{(j!)^{\mathfrak s-1}}.
		\]
		Indeed, rewriting the sum over ordered compositions
		\(k_1+\cdots+k_j=r\) (\(k_i\geq1\)), log-convexity of the factorial
		implies \(k_1!\cdots k_j!\leq(r-j+1)!\leq r!/j!\), so the factorial
		ratio is at most \((j!)^{1-\mathfrak s}\), while iterating
		\(\sum_{k=1}^{r-1}(1+k)^{-2}(1+r-k)^{-2}\leq C(1+r)^{-2}\) \(j-1\) times
		contributes \(C^{j-1}(1+r)^{-2}\). Consequently,
		\[
		|\Psi\circ F|_{\mathfrak s,K;I}
		\leq C(\mathfrak s)M
		\sum_{j\geq0}
		\frac{\bigl(C(\mathfrak s)|F|_{\mathfrak s,K;I}/d\bigr)^j}
		{(j!)^{\mathfrak s-1}}<\infty,
		\]
		with the same factor \(K^r\) at every derivative order. In the
		dual-number algebra,
		\(\Psi(F+\epsilon\partial_EF)=\Psi(F)+\epsilon D\Psi(F)\partial_EF\), so
		applying the spatial estimate to both \(\Psi\) and \(D\Psi\) and using
		submultiplicativity yields the mixed-norm assertion. Part~(3) follows
		because the SVD maps are real analytic wherever the singular
		values are distinct and the cosine lower bounds keep \(\tan\) and
		\(\sec^2\) away from their poles.
	\end{proof}
	
	\begin{lemma}
		\label{lem5}
		On an interval \(I\), consider a matrix product of the form
		\[
		X=D_\nu U D_\mu,
		\qquad
		U=\begin{pmatrix}a&b\\c&d\end{pmatrix},
		\]
		where \(\mu,\nu>1\), \(a\ne0\), and all matrix entries belong to
		\(\mathcal G_{\mathfrak s,K}^{(1)}(I)\). Set
		\begin{equation}\label{lem5a}
			\mathcal E
			:=\max\bigl\{|\mu^{-1}|_{\mathfrak s,K;1},
			|\nu^{-1}|_{\mathfrak s,K;1}\bigr\},
			\qquad
			\mathcal H
			:=1+|a^{-1}|_{\mathfrak s,K;1}
			+|b/a|_{\mathfrak s,K;1}
			+|c/a|_{\mathfrak s,K;1}
			+|d/a|_{\mathfrak s,K;1}.
		\end{equation}
		If the hyperbolic domination condition
		\begin{equation}\label{lem5b}
			\mathcal H\mathcal E^2\leq 2^{-6}
		\end{equation}
		holds, then the top singular value \(\Lambda(X)=\|X\|\) is simple, and
		the tangent coordinates \(z_L,z_R\) of the left and right expanding
		singular directions of \(X\) relative to the exterior diagonal axes
		satisfy, in the \emph{same} algebra \(\mathcal G_{\mathfrak s,K}^{(1)}(I)\),
		\begin{align}
			|\Lambda(X)^{-1}|_{\mathfrak s,K;1}
			&\leq C|a^{-1}|_{\mathfrak s,K;1}
			|\mu^{-1}|_{\mathfrak s,K;1}
			|\nu^{-1}|_{\mathfrak s,K;1},
			\label{lem5c}\\
			|z_L|_{\mathfrak s,K;1}
			&\leq C\mathcal H
			|\nu^{-1}|_{\mathfrak s,K;1}^{2},
			\label{lem5d}\\
			|z_R|_{\mathfrak s,K;1}
			&\leq C\mathcal H
			|\mu^{-1}|_{\mathfrak s,K;1}^{2},
			\label{lem5e}
		\end{align}
		for an absolute constant \(C>0\). The corresponding angle increments
		\(\arctan z_L\) and \(\arctan z_R\) obey the same bounds.
	\end{lemma}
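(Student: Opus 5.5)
The plan is to factor out the dominant entry and reduce to a perturbation of the rank-one matrix \(e_1e_1^{\mathsf T}\), to which Lemma~\ref{lem4} applies with absolute constants.

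\textbf{Step 1: normalization.} Multiplying out gives the exact identity
\[
X=a\,\nu\mu\,M,\qquad
M=\begin{pmatrix}1&\beta\\ \gamma&\delta\end{pmatrix},\qquad
\beta=\tfrac{b}{a}\mu^{-2},\quad
\gamma=\tfrac{c}{a}\nu^{-2},\quad
\delta=\tfrac{d}{a}\mu^{-2}\nu^{-2}.
\]
All of \(b/a,c/a,d/a,\mu^{-1},\nu^{-1}\) lie in \(\mathcal G^{(1)}_{\mathfrak s,K}(I)\), so submultiplicativity (Lemma~\ref{lem4}(1)) gives
\[
|\beta|_{\mathfrak s,K;1}\le\mathcal H|\mu^{-1}|^2,\qquad
|\gamma|_{\mathfrak s,K;1}\le\mathcal H|\nu^{-1}|^2,\qquad
|\delta|_{\mathfrak s,K;1}\le\mathcal H\mathcal E^4.
\]
By \eqref{lem5b}, each of these is at most \(2^{-6}\). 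Since the normalization in \eqref{def1a} makes the \(C^0\) norm bounded by the Gevrey norm, the same smallness holds pointwise.

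Left multiplication by the diagonal matrix \(D_\nu\) and right multiplication by \(D_\mu\) are fixed linear changes of coordinates. Hence the expanding singular directions of \(X\), expressed relative to the exterior axes, are exactly those of \(M\). Moreover, \(\Lambda(X)=|a|\nu\mu\,s(M)\), where \(s(M)\) is the top singular value of \(M\).

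\textbf{Step 2: explicit analytic formulas.} Write
\[
M^{\mathsf T}M=\begin{pmatrix}p&q\\ q&r\end{pmatrix},\qquad
p=1+\gamma^2,\quad q=\beta+\gamma\delta,\quad r=\beta^2+\delta^2 .
\]
The top eigenvalue is
\[
s^2=\tfrac12\Bigl(p+r+\sqrt{(p-r)^2+4q^2}\Bigr).
\]
The gap between the two eigenvalues equals \(\sqrt{(p-r)^2+4q^2}\ge p-r\ge 1-2^{-11}>0\), which gives simplicity of \(\Lambda(X)\).

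The right expanding direction is \((1,z_R)\). Its tangent solves \(qz^2+(p-r)z-q=0\), and the relevant root is
\[
z_R=\frac{2q}{(p-r)+\sqrt{(p-r)^2+4q^2}} .
\]
Symmetrically, using \(MM^{\mathsf T}\), the roles of \(\beta\) and \(\gamma\) are swapped: \(z_L=2q'/\bigl((p'-r')+\sqrt{\cdots}\bigr)\) with \(q'=\gamma+\beta\delta\).

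All of these are compositions of fixed real-analytic maps (square root, inversion) with arguments ranging in a fixed compact set. That set is \(p-r\in[1-2^{-10},1+2^{-10}]\) and \(|q|\le 2^{-5}\), with a complex neighbourhood of size independent of everything. The arguments also have absolutely bounded \(\mathcal G^{(1)}_{\mathfrak s,K}\) norms.

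\textbf{Step 3: estimates via Lemma~\ref{lem4}.} Lemma~\ref{lem4}(2),(3) in the dual-number algebra give \(|z_R|\le C|q|\). Then
\[
|q|\le|\beta|+|\gamma||\delta|\le \mathcal H|\mu^{-1}|^2\bigl(1+\mathcal H^{2}\mathcal E^{4}\bigr)\le C\mathcal H|\mu^{-1}|^2,
\]
using \(\mathcal H\mathcal E^2\le 2^{-6}\). This is \eqref{lem5e}, and \eqref{lem5d} follows in the same way.

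For \eqref{lem5c}, write \(\Lambda^{-1}=a^{-1}\mu^{-1}\nu^{-1}s^{-1}\) up to the locally constant sign of \(a\). Here \(s^{-1}=\Psi(p,q,r)\) with \(\Psi\) analytic near \((1,0,0)\), so \(|s^{-1}|\le C\), and submultiplicativity finishes the bound. The angle increments \(\arctan z_{L,R}\) are analytic functions of \(z\) vanishing at \(0\) with derivative \(1\). Writing \(\arctan z=z\cdot\Phi(z)\) with \(\Phi\) analytic on \(|z|\le 1/2\) transfers the same bounds.

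\textbf{Main obstacle.} The delicate point is making all constants absolute, independent of \(\mathfrak s\) and \(K\). The \(C^0\) smallness only places the arguments in a compact set; Lemma~\ref{lem4}(2) needs, in addition, that the Gevrey norms of the arguments be bounded by an absolute constant. The estimate of Lemma~\ref{lem4}(2) is a series \(\sum_j (C|F|/d)^j/(j!)^{\mathfrak s-1}\), whose constant a priori depends on \(\mathfrak s\).

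To avoid this, I would instead expand each analytic function as a convergent power series directly in the Banach algebra. For example:
\begin{itemize}[leftmargin=*]
\item \(\sqrt{1+w}=\sum\binom{1/2}{k}w^k\) and \((1+w)^{-1}=\sum(-w)^k\), with \(|w|_{\mathfrak s,K;1}\le 2^{-4}\);
\item \(z_R\) written as \(q\) times such a series in \(w=(p-r-1)\) and \(q^2\).
\end{itemize}
Submultiplicativity alone then bounds the sums by geometric series with ratio at most \(1/2\). This yields truly absolute constants and uses \eqref{lem5b} precisely as the convergence condition.
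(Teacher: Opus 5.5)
Your proposal is correct and follows essentially the same route as the paper: it normalizes $X=a\mu\nu M$, writes the expanding-direction slopes of $MM^{\mathsf T}$ and $M^{\mathsf T}M$ explicitly as $2q/\bigl((p-r)+\sqrt{(p-r)^2+4q^2}\bigr)$, and controls the square root, the reciprocal and $s^{-1}$ by binomial and Neumann series directly in $\mathcal G^{(1)}_{\mathfrak s,K}(I)$, which is exactly where your final paragraph lands. One small correction: the singular directions of $X$ agree with those of $M$ because $X$ is a scalar multiple of $M$, not because $D_\nu,D_\mu$ act as coordinate changes (non-orthogonal diagonal changes of coordinates do move singular directions).
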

	
	\begin{proof}
		Set \(\beta=b/a\), \(\gamma=c/a\), and \(\delta=d/a\). Factoring out the
		leading scalar \(a\mu\nu\) gives
		\begin{equation*}
			\frac{X}{a\mu\nu}
			=W:=\begin{pmatrix}1&x\\y&z\end{pmatrix},
			\qquad
			x=\beta\mu^{-2},\quad
			y=\gamma\nu^{-2},\quad
			z=\delta\mu^{-2}\nu^{-2}.
		\end{equation*}
		Since \(\mathcal H\geq1\), the condition \eqref{lem5b}
		implies \(\mathcal E^2\leq 2^{-6}\), and submultiplicativity in
		\(\mathcal G_{\mathfrak s,K}^{(1)}(I)\) yields
		\begin{equation}\label{eq9}
			|x|_{\mathfrak s,K;1}+|y|_{\mathfrak s,K;1}
			+|z|_{\mathfrak s,K;1}
			\leq 3\mathcal H\mathcal E^2
			\leq\frac{3}{64}<\frac1{16}.
		\end{equation}
		
		The entries of \(WW^{\mathsf T}\) are \(A_L=1+x^2\), \(B_L=y+xz\), and
		\(C_L=y^2+z^2\). The spectral gap of \(WW^{\mathsf T}\) and the slope of
		its top eigenvector are given explicitly by
		\[
		\Delta_L=\sqrt{(A_L-C_L)^2+4B_L^2},\qquad
		z_L=\frac{2B_L}{A_L-C_L+\Delta_L}.
		\]
		By \eqref{eq9}, the denominator
		\(A_L-C_L+\Delta_L\) differs from \(2\) by an element of norm at most
		\(1/4\) in \(\mathcal G_{\mathfrak s,K}^{(1)}(I)\). Expanding the square
		root and reciprocal in convergent binomial and Neumann series within
		\(\mathcal G_{\mathfrak s,K}^{(1)}(I)\) yields
		\[
		|z_L|_{\mathfrak s,K;1}
		\leq C|B_L|_{\mathfrak s,K;1}
		\leq C\bigl(|y|_{\mathfrak s,K;1}+|x|_{\mathfrak s,K;1}|z|_{\mathfrak s,K;1}\bigr)
		\leq C\mathcal H|\nu^{-1}|_{\mathfrak s,K;1}^2,
		\]
		proving \eqref{lem5d}. Applying the same calculation
		to \(W^{\mathsf T}W\) (which interchanges \(x\) and \(y\)) gives
		\eqref{lem5e}. If we write the SVD of an
		\(\mathrm{SL}(2,\R)\) block as \(X=\epsilon_X R_\Theta D_{\Lambda(X)}R_\Phi\)
		with the small angle lifts, then \(\epsilon_X=\operatorname{sgn}a\) and
		the right rotation angle increment is \(-\arctan z_R\); the locally
		constant sign \(\epsilon_X\) does not enter any of the norm or angle
		estimates.
		
		Similarly, the top singular value \(N(W)=\|W\|\) satisfies
		\(N(W)^2=(A_L+C_L+\Delta_L)/2\), so \(|N(W)^2-1|_{\mathfrak s,K;1}<1/4\),
		whence \(|N(W)^{-1}|_{\mathfrak s,K;1}\leq C\). Since \(a\) has constant
		sign on \(I\), \(\Lambda(X)^{-1}=|a|^{-1}\mu^{-1}\nu^{-1}N(W)^{-1}\),
		and submultiplicativity gives \eqref{lem5c}.
		Finally, \(\arctan z=z\int_0^1(1+t^2z^2)^{-1}\,dt\) satisfies the same
		bounds by \eqref{eq9}.
	\end{proof}
	
	For a Schr\"odinger connector, the rotated one-step transfer matrix has
	the form
	\[
	U=R_\Phi A_{v,E}(x)R_\Theta
	=\begin{pmatrix}a&b\\c&d\end{pmatrix},
	\qquad
	Q=E-\lambda v(x),\quad
	p=\tan\Phi,\quad t=\tan\Theta,
	\quad g=Q-p-t,
	\]
	on a domain where \(\cos\Phi\cos\Theta\neq0\) and \(g\neq0\). Direct
	matrix multiplication and substitution of \(Q=g+p+t\) yield the exact
	ratios
	\begin{equation}\label{eq10}
		\begin{aligned}
			a^{-1}&=(\cos\Phi\cos\Theta)^{-1}g^{-1},\\
			\frac ba&=-t-(1+t^2)g^{-1},\\
			\frac ca&=p+(1+p^2)g^{-1},\\
			\frac da&=-pt-(p+t)(1+pt)g^{-1}.
		\end{aligned}
	\end{equation}

	\section{Continued-fraction return geometry and flat profiles}
	\label{sec5}
	
	Throughout Sections~\ref{sec5}--\ref{sec8},
	we fix an irrational frequency \(\alpha\) and an admissible Diophantine
	pair \((\gamma,\tau)\) satisfying
	\begin{gather}
		\|k\alpha\|_{\T}\geq\gamma |k|^{-\tau},
		\qquad k\in\Z\setminus\{0\},\qquad \tau\geq1,
		\label{eqDC}\\
		q_{n+1}\leq C(\gamma,\tau) q_n^\tau.
		\label{eq11b}
	\end{gather}
	
	\subsection{Regularity exponents and scaling parameters}
	
	Fix a target Gevrey exponent
	\begin{equation*}
		\mathfrak s>1+\tau .
	\end{equation*}
	The auxiliary parameters are the flatness
	exponent \(\nu\) in \(h(x)=\exp(-|\sin\pi x|^{-\nu})\),
	an intermediate Gevrey index \(\sigma\in(1,\mathfrak s)\) for the cutoff
	functions, and the cost exponents \(\mathfrak b\) and \(\varkappa\)
	governing the Gevrey norms of the rescaled cutoffs and profile
	reciprocals. Explicitly, we set
	\begin{equation*}
		\begin{array}
			[c]{ll}%
			\delta=\dfrac{\mathfrak s-1-\tau}{4}>0,\qquad
			\sigma=1+\delta,\qquad
			\nu=\dfrac1{\tau+2\delta}, \\
			\mathfrak b=\dfrac1{\mathfrak s-\sigma}
			=\dfrac1{\tau+3\delta},\qquad
			\varkappa=\dfrac{1+\nu}{\mathfrak s}, %
		\end{array}
	\end{equation*}
	and fix an exponent \(\vartheta\) satisfying
	\begin{equation*}
		\tau\nu<\vartheta<1.
	\end{equation*}
	The identity \(\mathfrak s-1=\tau+4\delta\) gives
	\(\nu(\mathfrak s-1)>1\), equivalently \(\varkappa<\nu\).
	Consequently, these parameters satisfy the strict ordering
	\begin{equation}\label{eq12}
		\begin{gathered}
			\frac1{\mathfrak s-1}<\nu<\frac1\tau,\qquad
			1<\sigma<\mathfrak s,\qquad
			0<\mathfrak b<\nu,\qquad 0<\varkappa<\nu,\\
			\tau\max\{\mathfrak b,\varkappa\}
			<\tau\nu<\vartheta<1.
		\end{gathered}
	\end{equation}
	The interval \(\bigl(1/(\mathfrak s-1),1/\tau\bigr)\) containing
	\(\nu\) is nonempty if and only if \(\mathfrak s>1+\tau\). This condition
	accounts for the regularity threshold in Theorem~\ref{thm1}.
	
	\subsection{Continued-fraction return bases and good scales}
	
	Let \(\alpha=[\mathfrak a_0;\mathfrak a_1,\mathfrak a_2,\ldots]\) be the
	continued-fraction expansion of \(\alpha\), with principal convergents
	\(p_s/q_s\). For each \(s\geq1\), define the canonical symmetric interval
	\begin{equation*}
		\eta_s=\|q_s\alpha\|_{\T},\qquad
		\widehat\rho_s=\frac{\eta_s+\eta_{s+1}}2,
		\qquad \widehat I_s=(-\widehat\rho_s,\widehat\rho_s),
		\qquad d_s=\frac{\eta_s-\eta_{s+1}}2.
	\end{equation*}
	To keep the internal partition points \(\pm d_s\) of \(\widehat I_s\)
	uniformly separated from the origin relative to \(\widehat\rho_s\) (which requires avoiding \(\eta_s\approx\eta_{s+1}\)) while ensuring that at least one of any two consecutive indices is selected, we call a continued-fraction index \(s\)
	\emph{good} if
	\begin{equation*}
		\mathfrak a_{s+2}\geq2,
		\quad\text{or}\quad
		\bigl(\mathfrak a_{s+2}=1\ \text{ and }\ \mathfrak a_{s+3}\leq2\bigr).
	\end{equation*}
	We enumerate all sufficiently large good indices in increasing order as
	\(g_1<g_2<\cdots\), and define the scale-\(n\) return data by
	\begin{equation*}
		\begin{aligned}
			\rho_n&:=\widehat\rho_{g_n},
			& I_n&:=\widehat I_{g_n}=(-\rho_n,\rho_n),\\
			Q_n^-&:=q_{g_n},
			& Q_n^+&:=q_{g_n+1},\\
			R_{n,S}&:=Q_n^--1,
			& R_{n,L}&:=Q_n^+-1.
		\end{aligned}
	\end{equation*}
	After discarding finitely many initial indices if necessary, we have
	\(R_{n,S}\geq Q_n^-/2\) and \(R_{n,L}\geq Q_n^+/2\). Moreover, since
	\(g_{n+1}\geq g_n+1\), the return lengths satisfy the ordering
	\(R_{n,S}<R_{n,L}\leq R_{n+1,S}<R_{n+1,L}\).
	
	\begin{definition}
		\label{def2}
		For each scale \(n\geq1\), let \(\mathcal P_n:=\{-d_{g_n},d_{g_n}\}\subset I_n\)
		denote the two internal return-partition points of \(I_n\). We remove
		the full orbits of all partition and boundary points by setting
		\begin{equation*}
			\mathcal Z_{\rm ret}
			:=\bigcup_{n\geq1}\ \bigcup_{k\in\Z}
			T^{-k}\bigl(\mathcal P_n\cup\partial I_n\bigr),
			\qquad
			\Omega_{\rm ret}:=\T\setminus\mathcal Z_{\rm ret}.
		\end{equation*}
		A \emph{regular \(I_n\)-return atom} is one of the three connected
		components of \(I_n\setminus\mathcal P_n\). For \(x\in\Omega_{\rm ret}\),
		an integer \(t\in\Z\) with \(T^tx\in I_n\) is called a \emph{regular
			\(I_n\)-return time} (or node); if \(t<t'\) are consecutive return times
		to \(I_n\), the transfer matrix product along \(T^{t+1}x,\ldots,T^{t'-1}x\)
		is a \emph{regular open \(I_n\)-gap block} of physical length
		\(\ell=t'-t-1\).
	\end{definition}
	
	Since \(\mathcal Z_{\rm ret}\) is countable and \(T\)-invariant, its
	complement \(\Omega_{\rm ret}\) is a dense, \(T\)-invariant set of full
	Lebesgue measure, so all phase integrals defining \(L_v(E)\) may be
	restricted to \(\Omega_{\rm ret}\).
	
	\begin{lemma}
		\label{lem6}
		Consecutive good indices satisfy \(1\leq g_{n+1}-g_n\leq2\). At every
		good index \(s\),
		\begin{equation}\label{lem6a}
			d_s\geq \frac17\widehat\rho_s,
			\qquad \widehat\rho_{s+1}\leq\frac23\widehat\rho_s.
		\end{equation}
		Consequently, for all \(n\geq1\),
		\begin{equation}\label{lem6b}
			\rho_{n+1}\leq\frac23\rho_n,
			\qquad
			\rho_n^{-1}\leq C(\gamma,\tau)(Q_n^-)^\tau,
			\qquad
			\rho_{n+1}^{-1}\leq C(\gamma,\tau)(Q_n^+)^\tau.
		\end{equation}
	\end{lemma}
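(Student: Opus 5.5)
The plan is to derive everything from the continued-fraction recursion
\[
\eta_s=\mathfrak a_{s+2}\,\eta_{s+1}+\eta_{s+2},\qquad \eta_{s+2}<\eta_{s+1}<\eta_s ,
\]
together with the Diophantine bound \(\eta_s=\|q_s\alpha\|_{\T}\geq\gamma q_s^{-\tau}\) from \eqref{eqDC}. Only elementary case analysis on the partial quotient \(\mathfrak a_{s+2}\) is needed.

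\emph{Gaps between good indices.} The bound \(g_{n+1}-g_n\geq1\) is trivial. Suppose \(s\) is not good. Then \(\mathfrak a_{s+2}=1\) and \(\mathfrak a_{s+3}\geq3\). In particular \(\mathfrak a_{(s+1)+2}\geq2\), so \(s+1\) is good. Hence no two consecutive indices are both bad, and \(g_{n+1}-g_n\leq2\).

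\emph{The two estimates \eqref{lem6a}.} Write \(r=\eta_{s+1}/\eta_s\). Then
\[
\frac{d_s}{\widehat\rho_s}=\frac{1-r}{1+r},
\]
and this is at least \(1/7\) exactly when \(r\leq3/4\).

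\begin{itemize}
\item If \(\mathfrak a_{s+2}\geq2\), then \(\eta_s\geq2\eta_{s+1}\), so \(r\leq1/2\). For the second estimate, the numerator \(\eta_{s+1}+\eta_{s+2}\) is less than \(2\eta_{s+1}\), while the denominator \(\eta_s+\eta_{s+1}\) is greater than \(3\eta_{s+1}\). This gives \(\widehat\rho_{s+1}/\widehat\rho_s<2/3\).
\item If \(\mathfrak a_{s+2}=1\) and \(\mathfrak a_{s+3}\leq2\), then \(\eta_{s+1}=\mathfrak a_{s+3}\eta_{s+2}+\eta_{s+3}<3\eta_{s+2}\). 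Therefore
\[
r=\frac{\eta_{s+1}}{\eta_{s+1}+\eta_{s+2}}<\frac34 .
\]
Also \(\widehat\rho_{s+1}/\widehat\rho_s=(\eta_{s+1}+\eta_{s+2})/(2\eta_{s+1}+\eta_{s+2})\). This is at most \(2/3\) if and only if \(\eta_{s+2}\leq\eta_{s+1}\), which holds.
\end{itemize}

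\emph{Consequences \eqref{lem6b}.} Since \((\eta_s)\) is decreasing, so is \((\widehat\rho_s)\). Hence
\[
\rho_{n+1}=\widehat\rho_{g_{n+1}}\leq\widehat\rho_{g_n+1}\leq\tfrac23\widehat\rho_{g_n}=\tfrac23\rho_n .
\]
For the lower bounds, \(\rho_n\geq\eta_{g_n}/2\geq\frac\gamma2 (Q_n^-)^{-\tau}\).

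For \(\rho_{n+1}\), split according to the gap:
\begin{itemize}
\item If \(g_{n+1}=g_n+1\), then \(\rho_{n+1}\geq\eta_{g_n+1}/2\geq\frac\gamma2(Q_n^+)^{-\tau}\).
\item If \(g_{n+1}=g_n+2\), then \(g_n+1\) is bad, so \(\mathfrak a_{g_n+3}=1\). This gives \(\eta_{g_n+1}=\eta_{g_n+2}+\eta_{g_n+3}\leq2\eta_{g_n+2}\), hence
\[
\rho_{n+1}\geq\tfrac12\eta_{g_n+2}\geq\tfrac14\eta_{g_n+1}\geq\tfrac\gamma4(Q_n^+)^{-\tau}.
\]
\end{itemize}

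The only delicate point is this last case. A naive bound passes through \(q_{g_n+2}\) and, via \eqref{eq11b}, would lose a power \(\tau^2\). The bad-index structure \(\mathfrak a_{g_n+3}=1\) is exactly what avoids that loss.
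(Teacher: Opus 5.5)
Your proof is correct and follows the paper's structure. You use the same bad-index argument for $g_{n+1}-g_n\le 2$, the same case split on $\mathfrak a_{s+2}$ via $\eta_s=\mathfrak a_{s+2}\eta_{s+1}+\eta_{s+2}$ (parametrized by $\eta_{s+1}/\eta_s$ instead of the paper's $t=\eta_{s+2}/\eta_{s+1}$), and the same fact $\mathfrak a_{g_n+3}=1$ in the two-step case.

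The only real difference is in the lower bounds on $\rho_n$ and $\rho_{n+1}$. You apply \eqref{eqDC} directly to $\eta_{g_n}$ and $\eta_{g_n+1}$, which gives the explicit constants $2/\gamma$ and $4/\gamma$, and you use $\mathfrak a_{g_n+3}=1$ in the form $\eta_{g_n+1}\le 2\eta_{g_n+2}$. The paper instead goes through $q_{s+1}<\widehat\rho_s^{-1}<4q_{s+1}$ and the growth bound \eqref{eq11b}, using $\mathfrak a_{g_n+3}=1$ in the form $q_{g_n+3}=q_{g_n+2}+q_{g_n+1}$. You also spell out the monotonicity step behind $\rho_{n+1}\le\frac23\rho_n$, which the paper leaves implicit.
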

	
	\begin{proof}
		If \(s\) is not good, then \(\mathfrak a_{s+2}=1\) and
		\(\mathfrak a_{s+3}\geq3\), so \(s+1\) is good. Hence
		\(g_{n+1}-g_n\in\{1,2\}\). Setting
		\(t=\eta_{s+2}/\eta_{s+1}\in(0,1)\) and using the recurrence
		\(\eta_s=\mathfrak a_{s+2}\eta_{s+1}+\eta_{s+2}\), we have
		\[
		\frac{d_s}{\widehat\rho_s}
		=\frac{\mathfrak a_{s+2}-1+t}{\mathfrak a_{s+2}+1+t},
		\qquad
		\frac{\widehat\rho_{s+1}}{\widehat\rho_s}
		=\frac{1+t}{\mathfrak a_{s+2}+1+t}.
		\]
		When \(\mathfrak a_{s+2}\geq2\), these two fractions are at least \(1/3\)
		and at most \(1/2\), respectively. When \(\mathfrak a_{s+2}=1\) and
		\(\mathfrak a_{s+3}\leq2\), we have
		\(\eta_{s+1}=\mathfrak a_{s+3}\eta_{s+2}+\eta_{s+3}<3\eta_{s+2}\), so
		\(t>1/3\), and the two fractions are bounded below by \(1/7\) and above
		by \(2/3\). This proves \eqref{lem6a}.
		
		The standard continued-fraction estimates
		\((2q_{s+1})^{-1}<\eta_s<q_{s+1}^{-1}\) imply
		\(q_{s+1}<\widehat\rho_s^{-1}<4q_{s+1}\). Together with the Diophantine
		bound \eqref{eq11b}, this gives
		\(\rho_n^{-1}<4q_{g_n+1}\leq C(\gamma,\tau)(Q_n^-)^\tau\). If
		\(g_{n+1}=g_n+1\), the same bound at \(s=g_n+1\) gives
		\(\rho_{n+1}^{-1}\leq C(\gamma,\tau)(Q_n^+)^\tau\). If
		\(g_{n+1}=g_n+2\), then \(g_n+1\) is not good, so
		\(\mathfrak a_{g_n+3}=1\) and hence
		\[
		q_{g_n+3}=q_{g_n+2}+q_{g_n+1}
		\leq C(\gamma,\tau)(Q_n^+)^\tau+Q_n^+
		\leq C'(\gamma,\tau)(Q_n^+)^\tau,
		\]
		whence \(\rho_{n+1}^{-1}=\widehat\rho_{g_n+2}^{-1}<4q_{g_n+3}\leq C(\gamma,\tau)(Q_n^+)^\tau\).
	\end{proof}
	
	Combining \eqref{eq12} and
	\eqref{lem6b} yields the regional sublinear
	bounds
	\begin{equation}\label{eq13}
		\begin{aligned}
			\rho_n^{-\nu}+\rho_n^{-\mathfrak b}
			+\rho_n^{-\varkappa}
			&\leq C(\gamma,\tau) R_{n,S}^{\vartheta}=:C(\gamma,\tau) Y_{n,S},\\
			\rho_{n+1}^{-\nu}+\rho_{n+1}^{-\mathfrak b}
			+\rho_{n+1}^{-\varkappa}
			&\leq C(\gamma,\tau) R_{n,L}^{\vartheta}=:C(\gamma,\tau) Y_{n,L}.
		\end{aligned}
	\end{equation}
	Since the convergent denominators \(q_s\) grow at least exponentially,
	the sequence of errors
	\begin{equation*}
		\Delta_n^*:=
		R_{n,S}^{\vartheta-1}+R_{n,L}^{\vartheta-1}
		+R_{n,S}^{-1}+R_{n,L}^{-1}
	\end{equation*}
	is summable, with \(\sum_{n\geq n_0}\Delta_n^*\to0\) as
	\(n_0\to\infty\).
	
	\subsection{Decomposition of return blocks and entrance times}
	
	Each open return gap at scale \(n+1\) decomposes into open return gaps
	at scale \(n\) separated by single one-step transfer matrices at the
	intermediate visits to \(I_n\setminus I_{n+1}\).
	
	\begin{lemma}
		\label{lem7}
		At scale \(n\), the base \(I_n=\widehat I_{g_n}\) is partitioned by
		\(\mathcal P_n=\{-d_{g_n},d_{g_n}\}\) into three regular return atoms:
		\begin{enumerate}
			\item On the central atom \(\mathcal A_n^0:=(-d_{g_n},d_{g_n})\), both
			forward and backward first-return times to \(I_n\) equal
			\(Q_n^+\) (so both adjacent open gaps have long length
			\(R_{n,L}=Q_n^+-1\)).
			\item On each of the two side atoms comprising
			\(I_n\setminus\overline{\mathcal A_n^0}\), the unordered pair of
			forward and backward return times is \(\{Q_n^-,Q_n^+\}\) (one
			short gap of length \(R_{n,S}=Q_n^--1\) and one long gap of length
			\(R_{n,L}=Q_n^+-1\)).
		\end{enumerate}
		
		Let \(x\in\Omega_{\rm ret}\), let \(t<t'\) be consecutive return
		times of the orbit of \(x\) to \(I_{n+1}\), and list all intermediate
		return times to \(I_n\) in \([t,t']\) as
		\(t=t_0<t_1<\cdots<t_m<t_{m+1}=t'\). Setting \(y_i=T^{t_i}x\in I_n\setminus I_{n+1}\)
		(\(1\leq i\leq m\)) and
		\(G_i=A_{v,E}^{(t_i-t_{i-1}-1)}(T^{t_{i-1}+1}x)\), we have the exact
		factorization
		\begin{equation}\label{lem7a}
			A_{v,E}^{(t'-t-1)}(T^{t+1}x)
			=G_{m+1}A_{v,E}(y_m)G_m\cdots A_{v,E}(y_1)G_1,
		\end{equation}
		where each \(G_i\) is a regular open \(I_n\)-gap block of type
		\(T_i\in\{S,L\}\) and the lengths satisfy
		\begin{equation}\label{lem7b}
			\sum_{i=1}^{m+1}(t_i-t_{i-1}-1)+m=t'-t-1.
		\end{equation}
		If we assign each side-atom visit \(y_i\in I_n\setminus\overline{\mathcal A_n^0}\)
		(where \(|y_i|\geq\frac17\rho_n\)) to its unique adjacent short gap, and
		each central-atom visit \(y_i\in\mathcal A_n^0\setminus I_{n+1}\) (where
		\(|y_i|\geq\rho_{n+1}\)) to its preceding long gap \(G_i\), then every
		connector is assigned to an adjacent gap of minimal length
		\(\min\{\ell_i,\ell_{i+1}\}\) and each gap \(G_j\) receives at most two
		assignments.
	\end{lemma}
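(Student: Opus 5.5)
The plan is to read off the return times directly from the two basic displacements of the rotation, and then treat the factorization and the assignment rule as bookkeeping. Write \(s=g_n\), \(\eta_s=\|q_s\alpha\|_{\T}\) and \(\epsilon=(-1)^s\). Then \(T^{q_s}x=x+\epsilon\eta_s\) and \(T^{q_{s+1}}x=x-\epsilon\eta_{s+1}\) on \(\T\). By the symmetry \(x\mapsto-x\), which reverses \(\epsilon\), we may assume \(\epsilon=+1\).

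\emph{Step 1 (candidate return times).} For \(x\in I_n=(-\widehat\rho_s,\widehat\rho_s)\) we have \(x+\eta_s\in I_n\) iff \(x<\widehat\rho_s-\eta_s=-d_s\), and \(x-\eta_{s+1}\in I_n\) iff \(x>-\widehat\rho_s+\eta_{s+1}=-d_s\). Symmetrically, \(x-\eta_s\in I_n\) iff \(x>d_s\), and \(x+\eta_{s+1}\in I_n\) iff \(x<d_s\). Hence the candidate return times are:
\begin{itemize}
\item on the central atom \((-d_s,d_s)\), the times \(q_{s+1}\) forward and \(q_{s+1}\) backward;
\item on \((-\widehat\rho_s,-d_s)\), the times \(q_s\) forward and \(q_{s+1}\) backward;
\item on \((d_s,\widehat\rho_s)\), the times \(q_{s+1}\) forward and \(q_s\) backward.
\end{itemize}
The points \(\pm d_s\) lie in \(\mathcal Z_{\rm ret}\) and are excluded.

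\emph{Step 2 (minimality).} I must show that no \(0<k<q_{s+1}\) with \(k\neq q_s\) gives a return. Since any two points of \(I_n\) are at distance \(<2\widehat\rho_s=\eta_s+\eta_{s+1}\), it suffices to have \(\|k\alpha\|_{\T}\geq\eta_s+\eta_{s+1}\) for such \(k\). This is the step I expect to require the most care. I would invoke the best-approximation / three-distance structure: for \(0<k<q_{s+1}\), the residues \(k\alpha\) other than \(q_s\alpha\) are at distance at least \(\eta_{s-1}-\eta_s\) or at least \(\eta_s+\eta_{s+1}\) (the latter attained at \(k=q_{s+1}-q_s\)). Using \(\eta_{s-1}=\mathfrak a_{s+1}\eta_s+\eta_{s+1}\), both lower bounds are \(\geq\eta_s+\eta_{s+1}\) whenever \(\mathfrak a_{s+1}\geq2\). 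The case \(\mathfrak a_{s+1}=1\) must be checked separately via the gap lengths of the Steinhaus partition. Alternatively, I would argue directly: if \(\|k\alpha\|<\eta_s+\eta_{s+1}\) with \(k<q_{s+1}\), subtract a suitable multiple of \(q_s\) and use Legendre's best-approximation property to reach a contradiction. The backward statement follows by the same argument applied to \(-\alpha\). This gives parts (1) and (2), with gap lengths \(Q_n^\pm-1\).

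\emph{Step 3 (factorization and assignment).} Formula \eqref{lem7a} is the cocycle identity obtained by splitting the product at the intermediate \(I_n\)-return times \(t_i\). Each \(G_i\) runs between consecutive \(I_n\)-returns of a point of \(\Omega_{\rm ret}\), so it is a regular open \(I_n\)-gap block, and by Steps 1--2 its length is \(R_{n,S}\) or \(R_{n,L}\). The length count \eqref{lem7b} holds because the \(m\) connectors together with the \(m+1\) gaps fill \(\{t+1,\ldots,t'-1\}\) exactly. For the assignment rule:
\begin{itemize}
\item a side-atom visit \(y_i\) has exactly one adjacent short gap by part (2), and \(|y_i|\geq d_s\geq\rho_n/7\) by \eqref{lem6a};
\item a central visit \(y_i\) has two long adjacent gaps by part (1), so the preceding one \(G_i\) already has minimal length, and \(|y_i|\geq\rho_{n+1}\) since \(y_i\notin I_{n+1}\).
\end{itemize}
Each connector is assigned to one of the two gaps \(G_i,G_{i+1}\) adjacent to it. Hence a gap \(G_j\) can only receive connectors from its two endpoints \(y_{j-1}\) and \(y_j\), that is, at most two assignments.
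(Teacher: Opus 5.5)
\textbf{There is a genuine gap, in Step 2.} Your Steps 1 and 3 are correct and are essentially the paper's argument. You use the same orientation normalization \(T^{q_s}x=x+\eta_s\), \(T^{q_{s+1}}x=x-\eta_{s+1}\), the same endpoint identities \(\widehat\rho_s-\eta_s=-d_s=-\widehat\rho_s+\eta_{s+1}\), and the same bookkeeping for \eqref{lem7a}, \eqref{lem7b} and the assignment rule.

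Step 2 carries all the arithmetic content of parts (1)--(2). Step 1 only shows that \(q_s\) or \(q_{s+1}\) is \emph{a} return time. Without Step 2 you do not know it is the \emph{first} one, so neither the return times in (1)--(2) nor the gap types \(S,L\) used in Step 3 are established. What you write there is not yet a proof:
\begin{enumerate}
\item The three-distance claim is stated loosely and without justification.
\item You note yourself that the bound \(\eta_{s-1}-\eta_s\) collapses to \(\eta_{s+1}<\eta_s+\eta_{s+1}\) when \(\mathfrak a_{s+1}=1\), and you leave that case open.
\item The alternative route is only named. Made precise, it needs a case split according to the side of \(0\) on which \(k\alpha\) falls. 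You would shift \(k\) by \(q_s\), and possibly also by \(q_{s+1}\), and then invoke the best-approximation inequalities \(\|j\alpha\|_{\T}\geq\eta_s\) for \(0<j<q_{s+1}\) and \(\|j\alpha\|_{\T}\geq\eta_{s-1}\) for \(0<j<q_s\).
\end{enumerate}

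\textbf{The missing idea is the paper's lattice argument, which handles all cases at once.} Put \(a=\eta_s\), \(b=\eta_{s+1}\); with your normalization, \(q_s\alpha-p_s=a\) and \(q_{s+1}\alpha-p_{s+1}=-b\). Since \(q_sp_{s+1}-p_sq_{s+1}=\pm1\), every \((k,l)\in\Z^2\) can be written as \(u(q_s,p_s)+v(q_{s+1},p_{s+1})\) with \(u,v\in\Z\). Then \(k\alpha-l=ua-vb\). Now take \(0<k<q_{s+1}\):
\begin{enumerate}
\item If \(v\geq1\), then \(uq_s=k-vq_{s+1}<0\), so \(u\leq-1\) and \(ua-vb\leq-(a+b)\).
\item If \(v\leq-1\), then \(u\geq1\) and \(ua-vb\geq a+b\).
\item If \(v=0\), then \(k=uq_s\) with \(u\geq1\), and \(|k\alpha-l|=ua\geq2a>a+b\) unless \(u=1\).
\end{enumerate}
Hence \(\|k\alpha\|_{\T}\geq\eta_s+\eta_{s+1}\) for every \(0<k<q_{s+1}\) with \(k\neq q_s\). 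This is exactly what your Step 2 needs, for forward and backward returns alike, and it does not depend on \(\mathfrak a_{s+1}\).

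The case that worried you is in fact harmless. The quantity \(\eta_{s-1}-\eta_s\) is \(\|(q_{s-1}+q_s)\alpha\|_{\T}\). When \(\mathfrak a_{s+1}=1\) one has \(q_{s-1}+q_s=q_{s+1}\), so this point lies outside the range \(0<k<q_{s+1}\).
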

	
	\begin{proof}
		Set \(s=g_n\), \(a=\eta_s\), and \(b=\eta_{s+1}\), so that
		\(\widehat\rho_s=(a+b)/2\) and \(d_s=(a-b)/2>0\). Reversing orientation
		on \(\T\) if necessary, we may assume \(q_s\alpha-p_s=a\) and
		\(q_{s+1}\alpha-p_{s+1}=-b\). Any return to \(\widehat I_s\) at time
		\(0<k<q_{s+1}\) corresponds to an integer pair \((k,l)\) with
		\(|k\alpha-l|<a+b\). Since \((q_s,p_s)\) and \((q_{s+1},p_{s+1})\) form
		a \(\Z\)-basis of \(\Z^2\), we may write
		\((k,l)=u(q_s,p_s)+v(q_{s+1},p_{s+1})\), so that \(k\alpha-l=ua-vb\).
		Because \(0<k<q_{s+1}\), if \(v\geq1\) then \(u\leq-1\) and
		\(ua-vb\leq-a-b\), whereas if \(v\leq-1\) then \(u\geq1\) and
		\(ua-vb\geq a+b\). Thus \(v=0\), and since \(2a>a+b\) we must have
		\(u=1\), leaving \(k=q_s\) as the only possible return time strictly
		smaller than \(q_{s+1}\).
		
		For \(x\in(-d_s,d_s)\), we have \(|x\pm b|<d_s+b=\widehat\rho_s\) and
		\(|x\pm a|>a-d_s=\widehat\rho_s\), so both forward and backward first
		return times equal \(q_{s+1}\). For \(x\in(d_s,\widehat\rho_s)\), we
		have \(x-a\in(-\widehat\rho_s,-d_s)\), \(x+a\notin\widehat I_s\), and
		\(x-b\in(-\widehat\rho_s,d_s)\), so the backward and forward return
		times are \(q_s\) and \(q_{s+1}\), respectively; by symmetry, they are
		\(q_{s+1}\) and \(q_s\) on \((-\widehat\rho_s,-d_s)\).
		
		The factorization \eqref{lem7a} and length
		identity \eqref{lem7b} follow by
		splitting the orbit segment \((t,t')\) at all visits to \(I_n\). By
		\eqref{lem6a}, any visit in a side atom satisfies
		\(|y_i|\geq d_{g_n}\geq\frac17\rho_n\), while any visit in
		\(\mathcal A_n^0\setminus I_{n+1}\) satisfies \(|y_i|\geq\rho_{n+1}\).
		Finally, each gap \(G_j\) can receive an assignment at most from its two
		endpoints \(y_{j-1}\) and \(y_j\).
	\end{proof}
	
	The same return geometry gives bounds on the entrance time and
	Lebesgue measure of \(I_n\).
	
	\begin{lemma}
		\label{lem8}
		For every \(n\geq1\), the maximum forward (or backward) entrance time to
		\(I_n\) satisfies
		\begin{align}
			H_n&:=\sup_{x\in\T}\min\{k\geq0:T^kx\in I_n\}
			\leq Q_n^-+Q_n^+<2\rho_n^{-1},
			\label{lem8a}\\
			1&<Q_n^+|I_n|<2.
			\label{lem8b}
		\end{align}
	\end{lemma}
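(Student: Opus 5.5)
We need to prove Lemma 8: the entrance time bound $H_n\le Q_n^-+Q_n^+<2\rho_n^{-1}$ and the measure bound $1<Q_n^+|I_n|<2$. Write $s=g_n$, $a=\eta_s$, $b=\eta_{s+1}$, so that $|I_n|=2\widehat\rho_s=a+b$, and $Q_n^-=q_s$, $Q_n^+=q_{s+1}$.

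\textbf{Measure bound.} We use the classical identity $q_{s+1}\eta_s+q_s\eta_{s+1}=1$. Then
\[
Q_n^+|I_n|=q_{s+1}(a+b)=1-q_s b+q_{s+1}b=1+(q_{s+1}-q_s)b .
\]
Since $q_{s+1}>q_s$ for the large indices retained (all good indices are taken sufficiently large, so $s\ge1$), we get $Q_n^+|I_n|>1$. For the upper bound, $(q_{s+1}-q_s)b<q_{s+1}\eta_{s+1}<q_{s+1}/q_{s+2}\le1$. This gives $1<Q_n^+|I_n|<2$, which is \eqref{lem8b}.

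\textbf{Entrance time.} We use the three-distance (Steinhaus) theorem, or more directly the three-gap structure of return times established in Lemma~\ref{lem7}. The first-return map to $I_n$ has return times in $\{q_s,q_{s+1}\}$ (with $q_{s+1}$ on the central atom and $\{q_s,q_{s+1}\}$ on the side atoms). By Kac/Kakutani tower decomposition, $\T$ is the disjoint union (mod null sets) of towers $\{T^jA : 0\le j<r(A)\}$ over the return atoms $A\subset I_n$. Hence every point $x$ satisfies $x=T^jy$ with $y\in I_n$ and $0\le j<q_{s+1}$. The backward entrance time is therefore at most $q_{s+1}-1$, and symmetrically the forward entrance time is at most $q_{s+1}$. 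In either case $H_n\le q_{s+1}\le Q_n^-+Q_n^+$.

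To rigorously justify that the towers cover $\T$, one argues that ergodicity of $T$ makes a.e. orbit visit $I_n$, and that the union of towers is $T$-invariant. Boundary and partition orbits are countable, and for these one passes to the closure, or argues directly that the orbit points $\{x-k\alpha\}_{0\le k<q_s+q_{s+1}}$ partition $\T$ into gaps of length at most $\eta_s+\eta_{s+1}=|I_n|$. This is the standard three-gap fact; since the gaps have length at most $|I_n|$ while $I_n$ is an open interval of that length, a point $T^kx$ lands in $I_n$. Handling that open-versus-closed endpoint issue is the only delicate point, and it can be resolved using the strict inequality $\max\text{gap}=\eta_s+\eta_{s+1}$ attained only in special configurations, or simply by using the bound $q_{s+1}+q_s$ as slack.

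\textbf{Final inequality.} Finally, $Q_n^-+Q_n^+=q_s+q_{s+1}<2q_{s+1}<2\widehat\rho_s^{-1}=2\rho_n^{-1}$. This uses $q_{s+1}<\widehat\rho_s^{-1}$ from the proof of Lemma~\ref{lem6}, equivalently $q_{s+1}(a+b)<2$ from the measure bound. Together with the entrance-time bound this proves \eqref{lem8a}.
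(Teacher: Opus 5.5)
Your proof of \eqref{lem8b} and of the final inequality $q_s+q_{s+1}<2q_{s+1}<2\rho_n^{-1}$ is correct and agrees with the paper. The entrance-time step, however, has a genuine gap. The Kakutani tower over the three open return atoms covers $\T$ only off the countable union of orbits of $\partial I_n\cup\mathcal P_n$. On that set your intermediate bound $H_n\le q_{s+1}$ is false.

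To see this, use the orientation of Lemma~\ref{lem7}. Then $T^{-q_s}d_s=-\widehat\rho_s$ and $T^{-q_{s+1}}d_s=\widehat\rho_s$ both lie on $\partial I_n$, so $r_{I_n}^-(d_s)=q_s+q_{s+1}$. Consequently $x=T^{-k}d_s$ has forward entrance time exactly $k$ for every $k<q_s+q_{s+1}$; for example, $x=\widehat\rho_s-\alpha$ needs $q_{s+1}+1$ steps. In fact $H_n=q_s+q_{s+1}-1$.

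``Passing to the closure'' cannot repair this. The entrance-time function has open sublevel sets $\bigcup_{j\le k}T^{-j}I_n$, so it is only upper semicontinuous, and bounds on a dense set do not propagate to limit points. Entrance into $\overline{I_n}$ is also a different quantity. Since $H_n$ is a supremum over all of $\T$, not over $\Omega_{\rm ret}$, the tower argument does not prove \eqref{lem8a} as stated. (On $\Omega_{\rm ret}$ your tower bound $q_{s+1}-1$ is valid, and that would suffice for the later applications.)

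Your fallback is the paper's argument, but as written it does not resolve the endpoint issue. Gaps of length ``at most $|I_n|$'' do not suffice, because an open interval can coincide with a gap of length exactly $|I_n|$ and miss every point. This already happens for the $q_{s+1}$ points $\{k\alpha\}_{0\le k<q_{s+1}}$: among their gaps, $q_s$ have length exactly $\eta_s+\eta_{s+1}$. Your remark about the maximal gap being ``attained only in special configurations'' does not help, since the supremum sees those configurations.

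The missing fact is the two-distance statement at $N=q_s+q_{s+1}$. These points cut $\T$ into $q_{s+1}$ gaps of length $\eta_s$ and $q_s$ gaps of length $\eta_{s+1}$, consistent with $q_{s+1}\eta_s+q_s\eta_{s+1}=1$. Each gap is strictly shorter than $|I_n|=\eta_s+\eta_{s+1}$. An open interval containing none of the points lies inside a single gap, so every translate $I_n-x$ contains some $k\alpha$ with $0\le k<q_s+q_{s+1}$. This gives $H_n\le q_s+q_{s+1}-1$ with no exceptional set. Use $\{x+k\alpha\}$ for the forward time; your $\{x-k\alpha\}$ gives the backward one.
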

	
	\begin{proof}
		Write \(s=g_n\). The \(q_s+q_{s+1}\) points \(\{k\alpha\}_{k=0}^{q_s+q_{s+1}-1}\)
		partition \(\T\) into intervals of lengths \(\eta_s\) and \(\eta_{s+1}\),
		with \(q_{s+1}\) gaps of length \(\eta_s\) and \(q_s\) gaps of length
		\(\eta_{s+1}\) summing to \(q_{s+1}\eta_s+q_s\eta_{s+1}=1\). Since
		\(|I_n|=2\rho_n=\eta_s+\eta_{s+1}\) is strictly larger than every gap
		of this partition, every translate of \(I_n\) contains at least one point
		of \(\{k\alpha\}_{k=0}^{q_s+q_{s+1}-1}\), which gives
		\(H_n\leq Q_n^-+Q_n^+<2\rho_n^{-1}\). Moreover,
		\[
		Q_n^+|I_n|=q_{s+1}(\eta_s+\eta_{s+1})
		=1+(q_{s+1}-q_s)\eta_{s+1}\in(1,2),
		\]
		proving \eqref{lem8b}.
	\end{proof}
	
	\subsection{Flat profile, cutoff windows, and resonant energies}
	
	We build the potential around the smooth flat profile
	\begin{equation*}
		h(x)=
		\begin{cases}
			\exp\bigl\{-|\sin\pi x|^{-\nu}\bigr\},&x\notin\Z,\\
			0,&x\in\Z.
		\end{cases}
	\end{equation*}
	Since \(|\sin\pi x|\in[0,1]\), we have
	\(0\leq h(x)\leq e^{-1}<1/2\) for all \(x\in\T\).
	
	\begin{lemma}
		\label{lem9}
		There exists \(K_0=K_0(\nu)\geq1\) such that
		\begin{equation}\label{lem9a}
			h\in G^{1+1/\nu,K_0}(\T)
			\subset G^{\mathfrak s,K_0}(\T).
		\end{equation}
		Moreover, for any fixed \(0<c_0<C_0<1/2\), there exist constants
		\(c,C,\rho_0>0\) such that for all \(0<\rho\leq\rho_0\),
		\begin{equation}
			|h|_{\mathfrak s,K_0;
				\{c_0\rho\leq |x|\leq C_0\rho\}}
			\leq C e^{-c\rho^{-\nu}}.
			\label{lem9b}
		\end{equation}
		For the reciprocal estimate, set \(S(t):=|\sin\pi t|^{-\nu}\) and
		\(\mathcal A(r):=\{x\in\T:r\leq|x|\leq\rho_0\}\). Then
		\begin{equation}\label{lem9c}
			|h^{-1}|_{\mathfrak s,K;\mathcal A(r)}
			\leq C\exp\bigl\{S(r)+Cr^{-\varkappa}\bigr\},
			\qquad
			0<r\leq\rho_0,\quad K\geq K_0.
		\end{equation}
	\end{lemma}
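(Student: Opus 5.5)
The plan is to treat $h=\exp(-S)$ with $S(x)=|\sin\pi x|^{-\nu}$ through Cauchy estimates on complex discs whose radius is proportional to $|x|$. The point is that $S$ is real analytic away from $\Z$, and near $x\neq0$ it extends holomorphically to the disc $D(x,|x|/2)$. On that disc $|S(z)|\le C|x|^{-\nu}$ and $\operatorname{Re}S(z)\geq c|x|^{-\nu}$, since $\sin\pi z\approx\pi z$ stays in a sector around $\pi x$ of half-angle at most $\pi/6$. Hence $|h(z)|\le e^{-c|x|^{-\nu}}$ there.

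\textbf{Step 1: the first two assertions.} Cauchy's estimate on $D(x,|x|/2)$ gives
\[
|h^{(r)}(x)|\le r!\,(2/|x|)^r e^{-c|x|^{-\nu}}.
\]
Maximizing over $|x|$ for fixed $r$, the product $|x|^{-r}e^{-c|x|^{-\nu}}$ is at most $(Cr)^{r/\nu}\le C^r (r!)^{1/\nu}$. Therefore $|h^{(r)}|\le C^r (r!)^{1+1/\nu}$. Away from $\Z$ the function $h$ is analytic, so this bound holds on all of $\T$, and the extra $(1+r)^2$ weight is absorbed by enlarging $K_0$. This proves \eqref{lem9a}. The inclusion into $G^{\mathfrak s,K_0}$ uses $1+1/\nu<\mathfrak s$, which is \eqref{eq12}.

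For \eqref{lem9b}, on the annulus $c_0\rho\le|x|\le C_0\rho$ the same Cauchy bound reads
\[
|h^{(r)}|\le r!\,(C/\rho)^r e^{-c\rho^{-\nu}}.
\]
I split the exponential as $e^{-c\rho^{-\nu}/2}\cdot e^{-c\rho^{-\nu}/2}$. One factor is used to absorb $\rho^{-r}$ via \eqref{eq7a} with $a=\mathfrak s-1$, at a cost $\exp(C\rho^{-1/(\mathfrak s-1)})$. This is dominated by $e^{c\rho^{-\nu}/2}$ because $\nu>1/(\mathfrak s-1)$. The remaining factor gives the decay $e^{-c\rho^{-\nu}/2}$. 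The sup over $r$ in \eqref{def1a} with $K_0\ge1$ then yields \eqref{lem9b}.

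\textbf{Step 2: the reciprocal.} This is the main obstacle, since the sharp constant $1$ in front of $S(r)$ must be kept. On $\mathcal A(r)$ we have $h^{-1}=e^{S}$. I would not apply Cauchy to $e^{S}$ on a disc of radius $\asymp|x|$, because $\operatorname{Re}S$ grows there by a factor $>1$ and the constant $1$ would be lost.

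Instead, at a point $x\in\mathcal A(r)$ I use a disc of radius $\varepsilon=|x|^{1+\nu-\kappa'}$ for a suitable exponent. On that disc, $|S(z)-S(x)|\le C\varepsilon|x|^{-\nu-1}$, so
\[
|\partial^r e^{S}(x)|\le r!\,\varepsilon^{-r}e^{S(x)+C\varepsilon|x|^{-1-\nu}}.
\]
Since $S$ decreases in $|x|$ on $(0,\rho_0]$, we have $S(x)\le S(r)$. I then optimize $\varepsilon$ depending on $r$ rather than on $x$. Dividing by $(r!)^{\mathfrak s}$ and using \eqref{eq7a} with $a=\mathfrak s-1$ leaves $\sup_r \varepsilon^{-r}/(r!)^{\mathfrak s-1}\lesssim \exp(C\varepsilon^{-1/(\mathfrak s-1)})$. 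Balancing this against $\varepsilon|x|^{-1-\nu}$ gives a total exponent of order $|x|^{-(1+\nu)/\mathfrak s}=|x|^{-\varkappa}\le r^{-\varkappa}$.

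Concretely, I take $\varepsilon=|x|^{(1+\nu)(\mathfrak s-1)/\mathfrak s}$. This is $\le|x|/2$ for small $\rho_0$ because $\nu(\mathfrak s-1)>1$, so the disc avoids $\Z$ and the Taylor bound on $S$ is valid. The factor $K^{-r}$ with $K\ge K_0\ge1$ only helps. Taking the sup over $r$ and over $x\in\mathcal A(r)$ gives \eqref{lem9c}.
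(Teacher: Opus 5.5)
Your proposal is correct. Step~1 is essentially the paper's argument: Cauchy estimates on discs of radius comparable to $|x|$, where $\Re(\pm\sin\pi z)^{-\nu}\asymp|x|^{-\nu}$. Maximizing $|x|^{-r}e^{-c|x|^{-\nu}}$ gives \eqref{lem9a}, and \eqref{eq7a} together with $1/(\mathfrak s-1)<\nu$ gives \eqref{lem9b}.

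For the reciprocal \eqref{lem9c} you take a genuinely different route.
\begin{itemize}
\item The paper applies Cauchy only to $S$ itself, getting $|\partial^jS|\le C^{j+1}j!|x|^{-\nu-j}$, and expands $\partial^q e^{S}$ by Fa\`a di Bruno, which keeps the prefactor $e^{S(x)}$ exact. It then uses a combinatorial identity for the partition counts and $\binom qk^{\mathfrak s-2}\ge1$ (this needs $\mathfrak s>2$, available since $\tau\ge1$). This splits the sum into a product of two factorial series, and \eqref{eq7b} yields $\exp\{S(r)+Cr^{-\varkappa}+Cr^{-1/(\mathfrak s-1)}\}$. The last term is absorbed using $1/(\mathfrak s-1)<\varkappa$.
\item You apply Cauchy directly to $e^{S}$, on the shrunken disc of radius $\varepsilon=|x|^{(1+\nu)(\mathfrak s-1)/\mathfrak s}$. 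The increase of $\Re S$ over this disc, $C\varepsilon|x|^{-1-\nu}$, and the Gevrey cost $\exp\{C\varepsilon^{-1/(\mathfrak s-1)}\}$ from \eqref{eq7a} are then both of order $|x|^{-\varkappa}\le r^{-\varkappa}$. The only structural input is $\nu(\mathfrak s-1)>1$, which guarantees $\varepsilon\le|x|/2$, so the disc stays in the region where the first-derivative bound on $S$ holds.
\end{itemize}
Your argument is shorter and avoids Fa\`a di Bruno and the combinatorial identity. It also does not need $\mathfrak s>2$, and it shows that $\varkappa=(1+\nu)/\mathfrak s$ is exactly the optimal balance between the two losses. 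The paper's computation additionally gives a derivative-by-derivative bound with the exact factor $e^{S(x)}$, which is more than the lemma requires.

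One presentational point: you use $r$ both for the derivative order and for the inner radius of $\mathcal A(r)$. As a result, the sentence ``optimize $\varepsilon$ depending on $r$ rather than on $x$'' conflicts with your concrete $x$-dependent choice. Either reading works. If you instead take $\varepsilon=r^{(1+\nu)(\mathfrak s-1)/\mathfrak s}$ with $r$ the inner radius, you still have $\varepsilon\le|x|/2$ and $\varepsilon|x|^{-1-\nu}\le r^{-\varkappa}$ on $\mathcal A(r)$. You should fix one choice and rename the derivative index.
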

	
	\begin{proof}
		On the positive and negative punctured half-neighborhoods of \(x=0\),
		\(h\) extends to the holomorphic branches
		\(H_\pm(z)=\exp\{-(\pm\sin\pi z)^{-\nu}\}\). For sufficiently small
		\(a>0\) and \(0<|x|<\rho_0\), the complex disc \(|z-x|\leq a|x|\) remains
		within a sector where \(c|x|^{-\nu}\leq\Re(\pm\sin\pi z)^{-\nu}\leq C|x|^{-\nu}\).
		Cauchy's integral formula on \(|z-x|=a|x|\) yields
		\begin{equation}\label{eq14}
			|\partial_x^rh(x)|
			\leq C^{r+1}r!|x|^{-r}e^{-c|x|^{-\nu}},
			\qquad r\geq0.
		\end{equation}
		Maximizing \(|x|^{-r}e^{-c|x|^{-\nu}}\) over \(|x|>0\) gives
		\(\|\partial_x^rh\|_{C^0(\T)}\leq C^{r+1}(r!)^{1+1/\nu}\), which implies
		\eqref{lem9a} since \(1+1/\nu<\mathfrak s\). On the
		annulus \(c_0\rho\leq|x|\leq C_0\rho\), \eqref{eq14}
		and the factorial bound \eqref{eq7a} give
		\[
		|h|_{\mathfrak s,K_0;\{c_0\rho\leq|x|\leq C_0\rho\}}
		\leq Ce^{-c\rho^{-\nu}}\sup_{r\geq0}\frac{(1+r)^2(C/\rho)^r}{(r!)^{\mathfrak s-1}}
		\leq C\exp\bigl\{-c\rho^{-\nu}+C\rho^{-1/(\mathfrak s-1)}\bigr\},
		\]
		which yields \eqref{lem9b} because
		\(1/(\mathfrak s-1)<\nu\).
		
		To prove \eqref{lem9c}, observe that on
		\(\mathcal A(r)\) the analytic function \(S(x)=|\sin\pi x|^{-\nu}\)
		satisfies \(|\partial_x^jS(x)|\leq C^{j+1}j!|x|^{-\nu-j}\) for all
		\(j\geq1\). Applying Fa\`a di Bruno's formula to \(e^{S(x)}\) and using
		the combinatorial identity
		\[
		\sum_{\substack{m_1+\cdots+m_q=k\\
				m_1+2m_2+\cdots+qm_q=q}}
		\frac1{m_1!\cdots m_q!}
		=\frac1{k!}\binom{q-1}{k-1}
		\]
		gives, for \(q\geq1\),
		\[
		|\partial_x^q e^{S(x)}|
		\leq e^{S(x)}q!(C/|x|)^q
		\sum_{k=1}^q
		\binom{q-1}{k-1}\frac{(C|x|^{-\nu})^k}{k!}.
		\]
		Setting \(l=q-k\) and dividing by \((q!)^{\mathfrak s}\), we use
		\(\binom{q-1}{k-1}\leq\binom qk=(q!)/(k!l!)\) and \(\binom qk^{\mathfrak s-2}\geq1\)
		(since \(\mathfrak s>2\)) to obtain
		\[
		\frac{|\partial_x^q e^{S(x)}|}{(q!)^{\mathfrak s}}
		\leq e^{S(x)}C^q
		\sum_{k+l=q}
		\frac{(C|x|^{-\nu-1})^k}{(k!)^{\mathfrak s}}
		\frac{(C|x|^{-1})^l}{(l!)^{\mathfrak s-1}} .
		\]
		Applying \eqref{eq7b} to both factors and evaluating at
		\(|x|\geq r\) yields
		\[
		|h^{-1}|_{\mathfrak s,K_0;\mathcal A(r)}
		\leq C\exp\bigl\{
		S(r)+Cr^{-(\nu+1)/\mathfrak s}+Cr^{-1/(\mathfrak s-1)}\bigr\}.
		\]
		Since \(1/(\mathfrak s-1)<\varkappa=(\nu+1)/\mathfrak s<\nu\), this
		gives \eqref{lem9c} for all \(K\geq K_0\).
	\end{proof}
	
	The correction windows are chosen inside each
	return base \(I_n=(-\rho_n,\rho_n)\). Set the explicit dyadic ratios
	\begin{equation*}
		(a_0,b_0,a_1,a_2,a_3):=\frac1{128}(12,13,14,15,16),
		\qquad a_*:=\frac{31}{256},
	\end{equation*}
	so that
	\begin{equation*}
		0<a_0<b_0<a_1<a_2<a_*<a_3=\frac18<\frac17,
		\qquad \frac23a_3=\frac1{12}<\frac{3}{32}=a_0.
	\end{equation*}
	At scale \(n\), we define the central core \(\mathcal K_n\), the central
	cutoff support \(\Omega_{n,0}\), the symmetric side plateaus
	\(P_{n,\pm}\), and the outer support \(\widetilde I_n\) by
	\begin{equation*}
		\begin{aligned}
			\mathcal K_n&=[-a_0\rho_n,a_0\rho_n],
			&\Omega_{n,0}&=(-b_0\rho_n,b_0\rho_n),\\
			P_{n,+}&=[a_1\rho_n,a_2\rho_n],
			&P_{n,-}&=-P_{n,+},\\
			\widetilde I_n&=(-a_3\rho_n,a_3\rho_n),
			&P_n&:=P_{n,-}\cup P_{n,+}.
		\end{aligned}
	\end{equation*}
	We have \(P_n\subset\mathcal A_n^0\) (since \(a_2<1/7\)) and
	\(|P_n|=p_*|I_n|\) with \(p_*:=a_2-a_1=1/128\). We also fix symmetric
	open neighborhoods \(\Omega_{n,\pm}\) of \(P_{n,\pm}\) obtained by
	rescaling a fixed interval inside \((b_0,a_*)\), so that
	\(P_{n,\pm}\Subset\Omega_{n,\pm}\Subset\widetilde I_n\setminus\overline{\Omega_{n,0}}\)
	and
	\begin{equation}\label{eq15}
		b_0\rho_n<|x|<a_*\rho_n,
		\qquad x\in\overline{\Omega_{n,-}\cup\Omega_{n,+}}.
	\end{equation}
	Because \(\rho_{n+1}\leq\frac23\rho_n\) and \(\frac23a_3<a_0\), the
	outer support at scale \(n+1\) lies compactly inside the central
	core at scale \(n\):
	\begin{equation}\label{eqsupp}
		\overline{\widetilde I_{n+1}}\Subset \mathcal K_n.
	\end{equation}
	On the positive half-interval \([0,\rho_n)\) (and symmetrically on \((-\rho_n,0]\)), these subintervals and partition points are ordered from the origin outward as
	\[
	0\in\widetilde I_{n+1}
	\Subset\mathcal K_n
	\Subset\Omega_{n,0},
	\qquad
	P_{n,+}\Subset\Omega_{n,+}\Subset\widetilde I_n\setminus\overline{\Omega_{n,0}}
	\Subset\mathcal A_n^0=(-d_{g_n},d_{g_n})\subset I_n,
	\]
	with relative endpoints satisfying
	\[
	0 < a_3\frac{\rho_{n+1}}{\rho_n} < a_0 < b_0 < a_1 < a_2 < a_* < a_3 = \frac18 < \frac17 \leq \frac{d_{g_n}}{\rho_n} < 1.
	\]
	Table~\ref{tab1} summarizes the geometry of these subregions inside \(I_n\) and the corresponding behavior of the scale-\(n\) cutoff functions and corrected connecting function \(g_n(E,x;v_n)\) constructed in Section~\ref{sec6}.
	
	\begin{table}[htbp]
		\centering
		\caption{Subregions of the return base \(I_n=(-\rho_n,\rho_n)\) and action of the scale-\(n\) perturbation \(\delta_n\).}
		\label{tab1}
		\footnotesize
		\setlength{\tabcolsep}{4.5pt}
		\renewcommand{\arraystretch}{1.25}
		\begin{tabular}{llll}
			\hline
			\textbf{Subregion} & \textbf{Range of \(|x|/\rho_n\)} & \textbf{Cutoffs} & \textbf{Behavior of \(g_n(E,x;v_n)\)} \\
			\hline
			Next support \(\widetilde I_{n+1}\) & \([0,\, a_3\rho_{n+1}/\rho_n)\) & \(\chi_{n,0}=1\) & Modified at scales \(m\geq n+1\) \\
			Central core \(\mathcal K_n\) & \([0,\, 12/128]\) & \(\chi_{n,0}=1\) & Reset to \(g_n(0,x;v_n)=-\lambda h(x)\) \\
			Central transition \(\Omega_{n,0}\setminus\mathcal K_n\) & \((12/128,\, 13/128)\) & \(0\leq\chi_{n,0}<1\) & Linear interpolation, \(g_n(0,x;v_n)<0\) \\
			Side buffers \(\Omega_{n,\pm}\setminus P_{n,\pm}\) & \(\subset(13/128,\, 31/256)\) & \(0\leq\chi_{n,\pm}<1\) & Logarithmic interpolant \(\mathcal H_{n,\pm}\) \\
			Side plateaus \(P_{n,\pm}\) & \([14/128,\, 15/128]\) & \(\chi_{n,\pm}=1\) & Exact resonance \(g_n(E_n,x;v_n)=0\) \\
			Outer collar \(I_n\setminus\widetilde I_n\) & \([16/128,\, 1)\) & \(\chi_{n,0}=\chi_{n,\pm}=0\) & Unchanged (\(\delta_n=0\)), \(g_n(0,x;v_n)<0\) \\
			\hline
		\end{tabular}
	\end{table}
	
	On \(\Omega_{n,0}\) and \(\Omega_{n,\pm}\), we choose \(G^\sigma\)
	cutoff functions \(\chi_{n,0}\) and \(\chi_{n,\pm}\) satisfying
	\[
	\{\chi_{n,0}=1\}=\mathcal K_n,
	\qquad \{\chi_{n,\pm}=1\}=P_{n,\pm}.
	\]
	We take \(0\leq\chi_{n,\omega}\leq1\), with each cutoff compactly
	supported in its corresponding \(\Omega_{n,\omega}\); all positive-order
	derivatives vanish at the support and plateau endpoints.
	The cutoffs are constructed from translated and rescaled copies of the
	standard \(G^\sigma\) step
	\[
	u(t)=
	\begin{cases}
		\exp(-t^{-1/(\sigma-1)}),&t>0,\\
		0,&t\leq0,
	\end{cases}
	\qquad \psi(t)=\frac{u(t)}{u(t)+u(1-t)}.
	\]
	On either side of each plateau, an increasing or decreasing copy of
	\(\psi\) joins the constant values \(0\) and \(1\). The inner endpoints
	of these transition intervals are the plateau endpoints, while their
	outer endpoints lie strictly inside the corresponding
	\(\Omega_{n,\omega}\), at fixed relative positions before rescaling.
	By \eqref{eq7a}
	and \(\mathfrak b=(\mathfrak s-\sigma)^{-1}\), there is a fixed
	\(K_*\geq K_0\) such that
	\begin{equation}\label{eqcut}
		|\chi_{n,\omega}|_{\mathfrak s,K_*}
		\leq C\exp\{C\rho_n^{-\mathfrak b}\},
		\qquad \omega\in\{0,-,+\},
	\end{equation}
	while on the side buffers \eqref{eq15},
	Lemma~\ref{lem9} gives
	\begin{equation}\label{eq16}
		|h|_{\mathfrak s,K_*;\Omega_{n,-}\cup\Omega_{n,+}}
		\leq C\exp\{-c\rho_n^{-\nu}\}.
	\end{equation}
	
	Finally, fix \(\vartheta_d\in(0,b_0/4)\) and define the scale-\(n\)
	resonant energy \(E_n\) and exponent \(\beta\) by
	\begin{equation}\label{eqEn}
		E_n:=\lambda h(\vartheta_d\rho_n),
		\qquad
		\beta:=\frac12\pi^{-\nu}\bigl(a_*^\nu+a_3^\nu\bigr)\vartheta_d^{-\nu}.
	\end{equation}
	The choice \(0<\vartheta_d<b_0/4<a_*<a_3\) ensures that \(\beta\)
	satisfies the strict inequalities
	\[
	\pi^{-\nu}<\pi^{-\nu}\Bigl(\frac{a_*}{\vartheta_d}\Bigr)^\nu
	<\beta<\pi^{-\nu}\Bigl(\frac{a_3}{\vartheta_d}\Bigr)^\nu.
	\]
	Using \(S(t)=|\sin\pi t|^{-\nu}=\pi^{-\nu}|t|^{-\nu}(1+O(t^2))\) as
	\(t\to0\), we may fix a starting index \(n_1\) large enough
	that \(\rho_{n_1}<\min\{\rho_0,1/6\}\) and, for all
	\(n\geq n_1\),
	\begin{equation}\label{eq17}
		E_n\geq\lambda e^{-\beta|x|^{-\nu}}
		\quad(x\in\Omega_{n,-}\cup\Omega_{n,+}),
		\qquad
		h(x)\geq e^{-\beta|x|^{-\nu}}
		\quad(0<|x|\leq\rho_{n_1}),
	\end{equation}
	while at the next scale,
	\begin{equation}\label{eq18}
		E_{n+1}\leq\frac1{100}
		\lambda\exp\bigl\{-\beta(a_3\rho_{n+1})^{-\nu}\bigr\},
		\qquad
		\frac{E_{n+1}}{E_n}
		\leq e^{-c\rho_n^{-\nu}}
		\leq\frac1{100}.
	\end{equation}
	Moreover, since \(\vartheta_d\rho_{n+1}\leq\frac23\vartheta_d\rho_n<\frac16b_0\rho_n<a_0\rho_n\)
	and \(\vartheta_d<b_0/4<1\), there is a constant \(c>0\) such that for
	all \(n\geq n_1\),
	\begin{gather}
		S(\vartheta_d\rho_{n+1})-S(a_0\rho_n)
		\geq c\rho_n^{-\nu},
		\label{eq19a}\\
		S(\vartheta_d\rho_n)-S(\rho_n)
		\geq c\rho_n^{-\nu}.
		\label{eq19b}
	\end{gather}

	\section{Localized potential perturbations and return-chain estimates}
	\label{sec6}
	
	\subsection{Affine gluing with logarithmic side interpolation}
	
	On a return base
	\(I\), the following localized affine perturbation prescribes both the
	zero-energy profile on the central core and the resonant zeros on the
	side plateaus, while preserving a strict sign throughout the cutoff
	transition zones.
	
	\begin{lemma}
		\label{lem10}
		Let \(I\subset\T\) be a return base and \(g_I(E,x;v)\) the normalized
		connecting function of \eqref{lem2b}. Let
		\(\Omega_0,\Omega_-,\Omega_+\Subset I\) be pairwise disjoint open
		subintervals, and let \(0\leq\chi_\omega\leq1\) (\(\omega\in\{0,-,+\}\))
		be smooth cutoffs in \(C_c^\infty(\Omega_\omega)\) that are flat to all
		orders at \(\partial\supp\chi_\omega\). Assume that the open \(I\)-gap
		blocks admit smooth SVD branches with nonvanishing cosine product on
		\([0,E_*]\times\bigcup_\omega\Omega_\omega\).
		
		Write \(g^-(E,x):=g_I(E,x;v)\), and suppose that on the central
		transition zone \(\{x\in\Omega_0:0<\chi_0(x)<1\}\),
		\begin{equation*}
			g^-(0,x)\leq-F(x)<0.
		\end{equation*}
		Let \(H_0\geq0\) and \(A>0\) be smooth on \(\Omega_0\) and
		\(\Omega_-\cup\Omega_+\), respectively, and let \(0<E_0\leq E_*\).
		Set
		\begin{equation*}
			e=g^-(0,\cdot)+A,
			\qquad
			B_\omega=\int_0^{E_0}\partial_Eg^-(u,\cdot)\,du,
			\qquad
			\mathcal H_\omega=A^{1-\chi_\omega}B_\omega^{\chi_\omega},
		\end{equation*}
		assuming \(B_\omega>0\) on \(\Omega_\omega\), and define the perturbed
		potential \(v^+=v+\delta\) by
		\begin{equation}\label{lem10a}
			\delta(x)=
			\frac{\chi_0(x)}\lambda\bigl(g^-(0,x)+H_0(x)\bigr)
			+\sum_{\omega\in\{-,+\}}
			\frac1\lambda\bigl(\mathcal H_\omega(x)-A(x)
			+\chi_\omega(x)e(x)\bigr),
		\end{equation}
		where each term is extended by zero outside its support. Then the open
		\(I\)-gap blocks are unchanged for all \(E\). On the stated domain, their
		SVD data are unchanged and \(g^+(E,x):=g_I(E,x;v^+)\) satisfies
		\begin{equation}\label{lem10b}
			g^+(E,x)=g^-(E,x)-\lambda\delta(x),
			\qquad \partial_E^qg^+(E,x)=\partial_E^qg^-(E,x)\quad(q\geq1).
		\end{equation}
		The following conclusions hold:
		\begin{enumerate}
			\item On the central plateau \(\{\chi_0=1\}\) and side plateaus
			\(\{\chi_\omega=1\}\) (\(\omega\in\{-,+\}\)),
			\begin{align}
				g^+(0,x)&=-H_0(x),&&\chi_0(x)=1,
				\label{lem10c}\\
				g^+(E_0,x)&=0,&&\chi_\omega(x)=1.
				\label{lem10d}
			\end{align}
			\item On the central transition zone \(\{0<\chi_0<1\}\),
			\begin{equation}\label{lem10e}
				g^+(0,x)\leq-\bigl[(1-\chi_0(x))F(x)+\chi_0(x)H_0(x)\bigr].
			\end{equation}
			\item On each side support \(\Omega_\omega\),
			\begin{equation}\label{lem10f}
				g^+(0,x)=-\mathcal H_\omega(x)+(1-\chi_\omega(x))e(x);
			\end{equation}
			in particular, if \(|(1-\chi_\omega)e|\leq\frac14\mathcal H_\omega\),
			then \(g^+(0,x)\leq-\frac34\mathcal H_\omega(x)\).
			\item If \(I'\Subset\Omega_0\) is disjoint from \(\Omega_-\cup\Omega_+\),
			then any further perturbation supported in \(I'\) preserves
			\eqref{lem10d} identically.
		\end{enumerate}
	\end{lemma}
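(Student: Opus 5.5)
The plan is to reduce everything to Lemma~\ref{lem3} and then do pointwise algebra on each support. First, I will note that \(\delta\) is compactly supported in \(\Omega_0\cup\Omega_-\cup\Omega_+\Subset I\). On each side support the term \(\mathcal H_\omega-A+\chi_\omega e\) vanishes outside \(\supp\chi_\omega\), since there \(\chi_\omega=0\) and \(\mathcal H_\omega=A\). It is smooth across \(\partial\supp\chi_\omega\) because the cutoffs are flat there and \(A,B_\omega>0\), so \(\mathcal H_\omega=A\exp(\chi_\omega\log(B_\omega/A))\) is smooth. Lemma~\ref{lem3} then gives invariance of the open \(I\)-gap blocks for every \(E\), hence of their SVD data on the stated domain. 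It also gives the affine shift \(g^+=g^--\lambda\delta\). Since \(\delta\) does not depend on \(E\), all \(E\)-derivatives of order \(q\geq1\) coincide, which is \eqref{lem10b}.

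Next I will evaluate \(g^+(E,x)=g^-(E,x)-\lambda\delta(x)\) region by region. The supports are pairwise disjoint, so at each \(x\) only one summand of \eqref{lem10a} is active.

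\emph{Central support.} Here \(\lambda\delta=\chi_0(g^-(0,\cdot)+H_0)\), so \(g^+(0,x)=(1-\chi_0)g^-(0,x)-\chi_0H_0\). At \(\chi_0=1\) this is \eqref{lem10c}. On \(0<\chi_0<1\), the hypothesis \(g^-(0,x)\le -F\) gives \eqref{lem10e}.

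\emph{Side support \(\Omega_\omega\).} Here \(\lambda\delta=\mathcal H_\omega-A+\chi_\omega e\). Using \(e=g^-(0,\cdot)+A\),
\[
g^+(0,x)=g^-(0,x)-\mathcal H_\omega+A-\chi_\omega e=-\mathcal H_\omega+(1-\chi_\omega)e,
\]
which is \eqref{lem10f}. The inequality \(g^+(0,x)\le-\tfrac34\mathcal H_\omega\) follows immediately when \(|(1-\chi_\omega)e|\le\tfrac14\mathcal H_\omega\).

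For \eqref{lem10d}, I will use the fundamental theorem of calculus in \(E\). This is legitimate on \([0,E_0]\subset[0,E_*]\), where smooth SVD branches exist, so \(g^-\) is \(C^1\) in \(E\). It gives \(g^+(E_0,x)=g^+(0,x)+\int_0^{E_0}\partial_Eg^+\,du=g^+(0,x)+B_\omega(x)\), using \eqref{lem10b}. At \(\chi_\omega=1\) we have \(\mathcal H_\omega=B_\omega\) and \((1-\chi_\omega)e=0\), so \(g^+(E_0,x)=-B_\omega+B_\omega=0\).

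For item (4), a further perturbation \(\delta'\) supported in \(I'\subset\Omega_0\), disjoint from \(\Omega_\pm\), leaves the \(I\)-gap blocks unchanged by Lemma~\ref{lem3}. Its affine shift \(-\lambda\delta'\) vanishes on the side plateaus, so \eqref{lem10d} is preserved.

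The only delicate point is the smoothness of the side term at \(\partial\supp\chi_\omega\) together with the positivity needed to define \(A^{1-\chi}B^{\chi}\). I would handle it by writing \(\mathcal H_\omega-A=A\bigl(e^{\chi_\omega\log(B_\omega/A)}-1\bigr)\), which visibly vanishes to infinite order wherever \(\chi_\omega\) does.
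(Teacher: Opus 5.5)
Your proposal is correct and follows essentially the same route as the paper. You reduce to Lemma~\ref{lem3} for invariance of the gap blocks and the affine shift, then do pointwise algebra on each disjoint support. You obtain \eqref{lem10d} from \(g^+(E_0,x)=g^+(0,x)+B_\omega(x)\), which the paper phrases as \(\lambda\delta(x)=g^-(E_0,x)\) on the plateau, and smoothness from \(\mathcal H_\omega-A=A\bigl(e^{\chi_\omega\log(B_\omega/A)}-1\bigr)\). (Strictly, flatness of \(\chi_\omega\) is not needed for smoothness of \(\delta\), since each side term is smooth on \(\Omega_\omega\) and vanishes off the compact set \(\supp\chi_\omega\Subset\Omega_\omega\).)
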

	
	\begin{proof}
		Since \(\supp\delta\Subset I\), Lemma~\ref{lem3} shows
		that the open gap blocks and their SVD branches are unchanged, yielding
		\eqref{lem10b}. On \(\Omega_0\), we have
		\(g^+(0)=(1-\chi_0)g^-(0)-\chi_0H_0\), which gives both
		\eqref{lem10c} and
		\eqref{lem10e}. On \(\Omega_\omega\),
		substituting \(g^-(0)=-A+e\) into \(g^+(0)=g^-(0)-\lambda\delta\) gives
		\eqref{lem10f}. When \(\chi_\omega(x)=1\),
		\(\mathcal H_\omega(x)=B_\omega(x)\) and hence
		\(\lambda\delta(x)=B_\omega(x)-A(x)+e(x)=g^-(0,x)+B_\omega(x)=g^-(E_0,x)\),
		proving \eqref{lem10d}. Because \(\chi_\omega\) is
		flat to all orders at \(\partial\supp\chi_\omega\), the identity
		\(\mathcal H_\omega-A=A\bigl(e^{\chi_\omega\log(B_\omega/A)}-1\bigr)\)
		shows that \(\delta\) extends smoothly by zero across the support
		boundaries. Finally, any subsequent perturbation supported in
		\(I'\Subset\Omega_0\) vanishes on \(\Omega_\pm\) and at all intermediate
		orbit points of the open \(I\)-gaps, and hence preserves
		\eqref{lem10d}.
	\end{proof}
	
	\subsection{Gevrey bounds for logarithmic cutoff interpolants and reciprocals}
	
	By \eqref{eqcut} and the Banach algebra property of
	\(G^{\mathfrak s,K}\) for \(K\geq K_*\), the perturbation \(\delta\) in
	\eqref{lem10a} satisfies
	\begin{equation}\label{eq20}
		\begin{split}
			|\delta|_{\mathfrak s,K}
			\leq{}&\frac C\lambda e^{C\rho^{-\mathfrak b}}
			|g^-(0)+H_0|_{\mathfrak s,K;\Omega_0}\\
			&+\frac C\lambda\sum_{\omega\in\{-,+\}}
			\Bigl(
			|\mathcal H_\omega-A|_{\mathfrak s,K;\Omega_\omega}
			+e^{C\rho^{-\mathfrak b}}
			|e|_{\mathfrak s,K;\Omega_\omega}
			\Bigr).
		\end{split}
	\end{equation}
	The logarithmic interpolant \(\mathcal H_\omega\) and its reciprocal
	can be estimated without enlarging the Gevrey radius \(K\). The required
	composition estimate uses the endpoint flatness of the \(G^\sigma\)
	cutoffs.
	
	\begin{lemma}
		\label{lem11}
		Let \(1<\sigma<\mathfrak s\), set \(a=(\sigma-1)^{-1}\) and
		\(\mathfrak b=(\mathfrak s-\sigma)^{-1}\), and let \(\psi_\rho\) be the
		rescaling to an interval of length \(\asymp\rho\) of one of the fixed
		\(G^\sigma\) cutoff functions. Suppose that in the rescaled
		unit variable \(t\), a positive function \(\mathcal L_\rho\) satisfies
		\begin{equation}\label{lem11a}
			cL_0\leq\mathcal L_\rho(t)\leq CL_0,
			\qquad L_0=\rho^{-\nu},
			\qquad
			\|\partial_t^j\mathcal L_\rho\|_{C^0}
			\leq C^{j+1}j!L_0
			\quad(j\geq0).
		\end{equation}
		Assume that \(1/(\mathfrak s-1)<\nu\) and \(\mathfrak b<\nu\).
		Then, for \(\phi_\rho=\psi_\rho\) or \(\phi_\rho=1-\psi_\rho\),
		\begin{equation}\label{lem11b}
			\bigl|e^{-\phi_\rho\mathcal L_\rho}
			\bigr|_{\mathfrak s,K}
			\leq C e^{C\rho^{-\mathfrak b}},
			\qquad K\geq K_*.
		\end{equation}
		Moreover, if \(\Psi(w,z)\) is holomorphic on a fixed complex
		neighborhood of \([0,1]\times\{|z|\leq1/4\}\), then
		\begin{equation}\label{lem11c}
			|\Psi(\psi_\rho,\beta)|_{\mathfrak s,K}
			\leq C(\Psi) e^{C(\Psi)\rho^{-\mathfrak b}},
			\qquad |\beta|_{\mathfrak s,K}\leq\frac14,
			\quad K\geq K_*.
		\end{equation}
	\end{lemma}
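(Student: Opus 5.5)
The plan is to work in the rescaled unit variable \(t\), where \(x=x_0+\ell\rho t\) with \(\ell\) fixed, and convert back at the end. A bound of the form \(\|\partial_t^r F\|\leq M C^r (r!)^{\sigma}\) in \(t\) becomes \(\|\partial_x^rF\|\leq M (C/\rho)^r (r!)^\sigma\). Dividing by \(K^r(r!)^{\mathfrak s}\) and applying \eqref{eq7a} with exponent \(a=\mathfrak s-\sigma\) gives a factor \(C\exp\{C\rho^{-1/(\mathfrak s-\sigma)}\}=C e^{C\rho^{-\mathfrak b}}\). It therefore suffices to prove \(t\)-space \(G^\sigma\)-type derivative bounds, with a prefactor at most \(e^{C\rho^{-\mathfrak b}}\) or at most \(e^{CL_0^{\theta}}\) for some \(\theta\) such that \(\nu\theta\leq\mathfrak b\).

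\textbf{Bound \eqref{lem11b}.} On the plateau region (\(\phi_\rho\equiv 0\) or \(1\)) and where \(\phi_\rho\) is bounded away from \(0\), the function \(e^{-\phi_\rho\mathcal L_\rho}\) is at most \(e^{-cL_0}\). The danger is the large exponent multiplying derivatives of \(\phi_\rho\), so I will split according to the size of \(\phi_\rho\).

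Near an endpoint where \(\phi_\rho=\psi\approx u(t)=e^{-t^{-a}}\), I use Fa\`a di Bruno for \(\exp(-\phi\mathcal L)\). Each factor \(\partial^{k}(\phi\mathcal L)\) is bounded, via Cauchy estimates on a disc of radius \(\asymp t\), by \(C^k k!\,t^{-(1+a)k}\,\phi(t)L_0\) up to constants. The resulting sum over partitions is then dominated by
\[
e^{-c\phi L_0}\sum_j \frac{(C\phi L_0)^j}{j!}\,C^r r!\,t^{-(1+a)r}.
\]
The first part, \(e^{-c\phi L_0}\sum_j (C\phi L_0)^j/j!\), is bounded by \(e^{C'\phi L_0}\cdot e^{-c\phi L_0}\), which is harmless. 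This bound is wrong as written, so I will instead keep the exact Bell-polynomial structure, \(|\partial^r e^{-f}|\leq e^{-f}\,r!\,\sum_k \binom{r-1}{k-1}(C f/t^{1+a})^k/k!\) as in the proof of Lemma~\ref{lem9}, and maximize \(e^{-f}f^k\leq k^k e^{-k}\). This cancels the \(1/k!\). What remains is \(r!\,C^r t^{-(1+a)r}\), and the supremum of \(u(t)\)-type weights against \(t^{-(1+a)r}\) gives \((r!)^{(1+a)/a}=(r!)^\sigma\).

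Where \(\phi\) is not small, I use \(e^{-\phi L_0}\leq e^{-cL_0}\) to absorb the polynomial losses \(L_0^k\leq C^k k!\,e^{cL_0/2}\)-type terms. At worst these contribute \(e^{CL_0^{1/\mathfrak s}}\)-type factors after dividing by \((r!)^{\mathfrak s}\), by \eqref{eq7b}. Since \(L_0^{1/(\mathfrak s-\sigma)\cdot(\cdot)}\) must be compared with \(\rho^{-\mathfrak b}\), I will check the exponents: \(\nu/\mathfrak s<\nu\), and any such term is dominated using \(\mathfrak b<\nu\) together with the negative exponential \(e^{-cL_0}\) when necessary.

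\textbf{Bound \eqref{lem11c}.} Here I expand \(\Psi(w,z)=\sum_{k}\Psi_k(w)z^k\) in \(z\), which converges for \(|z|<1/4+\epsilon\). This reduces the estimate to \(|\Psi_k(\psi_\rho)|_{\mathfrak s,K}\), since \(|\beta^k|\leq 4^{-k}\) by \eqref{eq4}. I then apply Lemma~\ref{lem4}(2) to \(\Psi_k\circ\psi_\rho\), with \(|\psi_\rho|_{\mathfrak s,K}\leq Ce^{C\rho^{-\mathfrak b}}\) from \eqref{eqcut}. Lemma~\ref{lem4}(2) only bounds the result by a function of the norm of \(\psi_\rho\), and that function could be super-exponential, so instead I will directly use Fa\`a di Bruno with the \(t\)-space \(G^\sigma\) bounds of \(\psi\). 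Since \(\psi\) is \(G^\sigma\) with fixed constants in \(t\), \(\Psi_k\circ\psi\) is \(G^\sigma\) in \(t\) with constants \(C(\Psi)^{r}\), uniformly geometric in \(k\). Rescaling then gives \(e^{C\rho^{-\mathfrak b}}\).

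The main obstacle is the endpoint-flatness cancellation in \eqref{lem11b}. There, derivatives of \(\phi\mathcal L\) carry the large factor \(L_0=\rho^{-\nu}\), which must be absorbed by \(e^{-\phi L_0}\) and by the flatness of \(\psi\) (the weights \(e^{-t^{-a}}\)) rather than paying \(e^{CL_0}\), which would be fatal because \(\nu>\mathfrak b\).
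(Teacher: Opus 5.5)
There is a genuine gap in your endpoint estimate for \eqref{lem11b}, in the region where the cutoff is extremely flat. Write $t$ for the distance to the flat endpoint and $f=\phi_\rho\mathcal L_\rho$.

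\textbf{Where the argument breaks.} The per-block bound you feed into Fa\`a di Bruno, $|\partial_t^k f|\le C^kk!\,t^{-(1+a)k}\phi_\rho(t)L_0$, is true pointwise. It comes from Cauchy discs of radius $\asymp t^{1+a}$; a disc of radius $\asymp t$ would instead give $t^{-k}$, at the price of $\sup_{\rm disc}|\phi_\rho|\approx\phi_\rho(t)^{1-C\epsilon}$. However, this bound loses a full factorial. Applied to $\phi_\rho$ alone, it gives only $\sup_t C^kk!\,t^{-(1+a)k}e^{-t^{-a}}\approx C^k(k!)^{1+\sigma}$, i.e.\ $G^{1+\sigma}$.

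In your sum it leaves the prefactor $r!\,C^rt^{-(1+a)r}$, and neither option closes:
\begin{enumerate}
\item If you use $e^{-f}f^k\le k^ke^{-k}$ for every $k$, no $u(t)$-type weight remains, and $t^{-(1+a)r}$ is unbounded as $t\to0$.
\item If you keep one factor $f\lesssim\phi_\rho L_0$ as the weight, then $\sup_te^{-t^{-a}}t^{-(1+a)r}\approx C^r(r!)^\sigma$ multiplies the $r!$ already present. You get $C^rL_0(r!)^{1+\sigma}$, not $(r!)^\sigma$.
\end{enumerate}
After rescaling, this gives at best $\exp\{C\rho^{-1/(\mathfrak s-1-\sigma)}\}$, and nothing if $\mathfrak s\le1+\sigma$. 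That is strictly worse than $e^{C\rho^{-\mathfrak b}}$.

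\textbf{What works, and what is missing.} Your Bell-polynomial argument does work where $f\gtrsim1$, that is, where $y:=t^{-a}\le AW$ with $W=1+\log(e+L_0)$. There $t^{-(1+a)}=y^{\sigma}\le CW^{\sigma}$, so you get $r!\,(CW^\sigma)^r$. This rescales to $\exp\{C(\rho^{-1}W^\sigma)^{1/(\mathfrak s-1)}\}\le Ce^{C\rho^{-\mathfrak b}}$, because $1/(\mathfrak s-1)<\mathfrak b$. It matches what the paper obtains in that regime from Cauchy discs of radius $\epsilon t/(1+y)$, on which $|F|\le1$.

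What is missing is the deep flat regime $y>AW$. There the paper does not estimate factor by factor. It applies Cauchy's estimate to $F-1$ on the large disc $|z-t|\le\epsilon t$. On that disc $\Re z^{-a}\ge c(a)y$, hence $|\phi_\rho\mathcal L_\rho|\le CL_0e^{-c(a)y}\le\frac12$. This costs only $t^{-r}=y^{(\sigma-1)r}$ for $r$ derivatives, instead of $y^{\sigma r}$. After absorbing $L_0\le e^{c(a)y/2}$, one gets $r!\,\sup_y y^{(\sigma-1)r}e^{-c(a)y/2}\le C^{r}(r!)^{\sigma}$.

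\textbf{Minor points.} On the plateau where $\phi_\rho\equiv0$ one has $F\equiv1$, not $F\le e^{-cL_0}$. Your treatment of the region where $\phi_\rho$ is bounded below is fine. Your proof of \eqref{lem11c} is essentially the paper's.
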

	
	\begin{proof}
		Near an endpoint where \(\phi_\rho\) vanishes to infinite order, write
		\(\phi_\rho(t)=a_0(t)e^{-d(t)^{-a}}\), where \(d(t)\) is the distance to
		the endpoint in the unit variable \(t\). For the standard template
		\(u(t)=e^{-t^{-a}}\mathbf 1_{\{t>0\}}\) and
		\(\psi(t)=u(t)/(u(t)+u(1-t))\), the factor \(a_0(d)=(u(d)+u(1-d))^{-1}\)
		is holomorphic and bounded above and away from zero on a complex sector
		\(|z-d|\leq\epsilon d\). Set \(y=d^{-a}\), \(W=1+\log(e+L_0)\), and
		\(F(t)=e^{-\phi_\rho(t)\mathcal L_\rho(t)}\).
		
		Fix \(A>1\) large and \(\epsilon>0\) small. In the regime
		\(y\leq AW\), we apply Cauchy's estimate on the disc
		\(|z-t|\leq r_t:=\epsilon d/(1+y)\). On this disc,
		\(|(d+z-t)^{-a}-d^{-a}|\leq C(a)\epsilon\) and
		\(|\mathcal L_\rho(z)-\mathcal L_\rho(t)|\leq(c/10)L_0\), so
		\(\Re(\phi_\rho(z)\mathcal L_\rho(z))\geq0\) and \(|F(z)|\leq1\). Since
		\(r_t^{-1}=\epsilon^{-1}y^{\sigma-1}(1+y)\leq CW^\sigma\), Cauchy's
		inequality yields
		\[
		|F^{(r)}(t)|\leq r!(C/r_t)^r\leq C^{r+1}r!W^{\sigma r}.
		\]
		In the deep flat regime \(y>AW\), we use the larger disc
		\(|z-t|\leq\epsilon d\), where \(\Re(d+z-t)^{-a}\geq c(a)y\) and
		\(|\phi_\rho(z)\mathcal L_\rho(z)|\leq CL_0e^{-c(a)y}\leq1/2\). Applying
		Cauchy's estimate to \(F(z)-1\) and absorbing \(L_0\leq e^{c(a)y/2}\)
		gives, for \(r\geq1\),
		\[
		|F^{(r)}(t)|
		\leq C^{r+1}r!d^{-r}L_0e^{-c(a)y}
		\leq C^{r+1}r!\sup_{y>0}\bigl(y^{(\sigma-1)r}e^{-c(a)y/2}\bigr)
		\leq C^{r+1}(r!)^\sigma.
		\]
		Away from the flat endpoint, \(\phi_\rho\geq c>0\), so we may write
		\(F=H(-\log\phi_\rho-\log\mathcal L_\rho)\) with \(H(s)=e^{-e^{-s}}\).
		Since \(H\) has bounded derivatives on a horizontal strip and Fa\`a di
		Bruno's formula involves only positive-order derivatives of
		\(-\log\phi_\rho-\log\mathcal L_\rho\), we again obtain
		\(|F^{(r)}(t)|\leq C^{r+1}(r!)^\sigma\). Combining these estimates in
		the unit variable \(t\) gives
		\[
		\|\partial_t^rF\|_{C^0}
		\leq C^{r+1}\bigl((r!)^\sigma+r!W^{\sigma r}\bigr),
		\qquad r\geq0.
		\]
		Rescaling back to \(x=\rho t\), dividing by \(K^r(r!)^{\mathfrak s}\),
		and applying \eqref{eq7a} yields
		\[
		|F|_{\mathfrak s,K}
		\leq C\exp\bigl\{C\rho^{-1/(\mathfrak s-\sigma)}\bigr\}
		+C\exp\bigl\{C(\rho^{-1}W^\sigma)^{1/(\mathfrak s-1)}\bigr\}
		\leq Ce^{C\rho^{-\mathfrak b}},
		\]
		proving \eqref{lem11b}.
		
		For \eqref{lem11c}, expand
		\(\Psi(w,z)=\sum_{k\geq0}F_k(w)z^k\) on a relatively compact product
		neighborhood of \([0,1]\times\{|z|\leq R\}\) in the holomorphy domain,
		with \(R>1/4\). Let \(M<\infty\) be an upper bound for \(|\Psi|\) on this
		neighborhood. Applying the single-variable Fa\`a di Bruno formula to the
		rescaled cutoff \(\psi_\rho\)
		gives \(|F_k(\psi_\rho)|_{\mathfrak s,K}\leq CMR^{-k}e^{C\rho^{-\mathfrak b}}\),
		and the geometric series converges because \(|\beta|_{\mathfrak s,K}/R\leq(4R)^{-1}<1\).
		Summing it completes the proof.
	\end{proof}
	
	We now use Lemma~\ref{lem11} to
	estimate the scale-\(n\) correction \(\delta_n\) and the reciprocal
	\(g_n(E;v_n)^{-1}\) on the central transition zone and side buffers.
	
	\begin{lemma}
		\label{lem12}
		On a component \(\mathcal A\subset I_n\) where the return times are constant, let \(g_n(E,x;v_{n-1})\)
		be the normalized connecting function, and set \(A_n=\lambda h\) and
		\(e_n=g_n(0;v_{n-1})+A_n\). Fix \(K\geq K_*\), and assume that on
		the cutoff supports, uniformly for \(0\leq E\leq E_n\),
		\begin{equation}\label{lem12a}
			|e_n|_{\mathfrak s,K}\leq D,\qquad
			|\partial_Eg_n(E;v_{n-1})-1|_{\mathfrak s,K}
			\leq \delta_E\leq\frac18.
		\end{equation}
		On each side support \(\Omega_{n,\omega}\) (\(\omega\in\{-,+\}\)), define
		\begin{equation}\label{lem12b}
			\begin{split}
				B_{n,\omega}(x)
				&=\int_0^{E_n}\partial_Eg_n(u,x;v_{n-1})\,du
				=E_n\bigl(1+\beta_{n,\omega}(x)\bigr),\\
				\mathcal H_{n,\omega}
				&=A_n^{1-\chi_{n,\omega}}
				B_{n,\omega}^{\chi_{n,\omega}},
			\end{split}
		\end{equation}
		and define the scale-\(n\) potential perturbation by
		\begin{equation}\label{lem12c}
			\delta_n
			=\frac{\chi_{n,0}}\lambda e_n
			+\sum_{\omega\in\{-,+\}}\frac1\lambda
			\bigl(\mathcal H_{n,\omega}-A_n
			+\chi_{n,\omega}e_n\bigr),
			\qquad v_n=v_{n-1}+\delta_n.
		\end{equation}
		Then \(|\beta_{n,\omega}|_{\mathfrak s,K}\leq\delta_E\leq1/8\) and
		\(\frac78E_n\leq B_{n,\omega}\leq\frac98E_n\leq A_n\). The reference
		reciprocals and the side interpolation difference satisfy
		\begin{align}
			|A_n^{-1}|_{\mathfrak s,K;\Omega_{n,0}\setminus\mathcal K_n^\circ}
			&\leq C\lambda^{-1}
			\exp\bigl\{S(a_0\rho_n)+C\rho_n^{-\varkappa}\bigr\},
			\label{lem12d}\\
			|\mathcal H_{n,\omega}^{-1}|_{\mathfrak s,K;\Omega_{n,\omega}}
			&\leq CE_n^{-1}e^{C\rho_n^{-\mathfrak b}}
			\leq C\lambda^{-1}
			e^{C(\rho_n^{-\nu}+\rho_n^{-\mathfrak b})},
			\label{lem12e}\\
			|\mathcal H_{n,\omega}-A_n|_{\mathfrak s,K;\Omega_{n,\omega}}
			&\leq C\lambda
			e^{-c\rho_n^{-\nu}+C\rho_n^{-\mathfrak b}}.
			\label{lem12f}
		\end{align}
		Furthermore, there is a structural constant \(C'\geq1\) such that if
		\begin{equation}\label{lem12g}
			\lambda^{-1}\bigl(De^{C'\rho_n^{-\mathfrak b}}+E_{n+1}\bigr)
			e^{S(a_0\rho_n)+C'\rho_n^{-\varkappa}}\leq\frac18,
			\qquad
			E_n^{-1}(D+E_{n+1})e^{C'\rho_n^{-\mathfrak b}}\leq\frac18,
		\end{equation}
		then for all \(0\leq E\leq E_{n+1}\), the relative remainders on
		\(\Omega_{n,0}\setminus\mathcal K_n^\circ\) and \(\Omega_{n,\omega}\)
		have \(|\cdot|_{\mathfrak s,K}\)-norm at most \(1/8\), and
		\begin{align}
			|g_n(E;v_n)^{-1}|_{\mathfrak s,K}
			&\leq C\lambda^{-1}
			e^{C(\rho_n^{-\nu}+\rho_n^{-\mathfrak b})},
			\label{lem12h}\\
			|\partial_E(g_n(E;v_n)^{-1})|_{\mathfrak s,K}
			&\leq |g_n(E;v_n)^{-1}|_{\mathfrak s,K}^{2}
			|\partial_Eg_n(E;v_n)|_{\mathfrak s,K}.
			\label{lem12i}
		\end{align}
		Each norm here is taken on the central transition component or side
		component under consideration.
	\end{lemma}
	
	\begin{proof}
		Integrating \eqref{lem12a} over
		\([0,E_n]\) gives \(|\beta_{n,\omega}|_{\mathfrak s,K}\leq\delta_E\leq1/8\)
		and \(\frac78E_n\leq B_{n,\omega}\leq\frac98E_n\).
		On \(\Omega_{n,\omega}\), set \(\mathcal L_n(x):=\log(A_n(x)/E_n)=S(\vartheta_d\rho_n)-S(x)\)
		and subdivide \(\Omega_{n,\omega}\) at the plateau endpoints of
		\(P_{n,\omega}\) (across which \(\chi_{n,\omega}\) is flat to all orders).
		Because \(b_0\rho_n<|x|<a_*\rho_n\) and \(\vartheta_d<b_0/4\), in the
		rescaled coordinate \(t=x/\rho_n\) the holomorphic branch
		\((\sin(\pi\vartheta_d\rho_n))^{-\nu}-(\omega\sin(\pi\rho_n t))^{-\nu}\)
		satisfies \(c\rho_n^{-\nu}\leq\mathcal L_n\leq C\rho_n^{-\nu}\) (so in
		particular \(B_{n,\omega}\leq A_n\)) and Cauchy's estimate on a fixed
		complex annulus around \([b_0,a_*]\) gives
		\(\|\partial_t^j\mathcal L_n\|_{C^0}\leq C^{j+1}j!\rho_n^{-\nu}\),
		which verifies \eqref{lem11a}. The factorizations
		\[
		\mathcal H_{n,\omega}^{-1}
		=E_n^{-1}e^{-(1-\chi_{n,\omega})\mathcal L_n}
		(1+\beta_{n,\omega})^{-\chi_{n,\omega}},
		\qquad
		\mathcal H_{n,\omega}
		=A_ne^{-\chi_{n,\omega}\mathcal L_n}
		(1+\beta_{n,\omega})^{\chi_{n,\omega}},
		\]
		allow us to apply Lemma~\ref{lem11} with
		\(\Psi(w,z)=(1+z)^{\pm w}\). Together with \eqref{eq16},
		this proves \eqref{lem12e} and
		\eqref{lem12f}. The bound
		\eqref{lem9c} on \(|x|\geq a_0\rho_n\)
		gives \eqref{lem12d}.
		
		By Lemma~\ref{lem10}, we have
		\[
		g_n(E;v_n)=
		\begin{cases}
			-A_n+(1-\chi_{n,0})e_n+\int_0^E\partial_Eg_n(u;v_n)\,du
			&\text{on }\Omega_{n,0}\setminus\mathcal K_n^\circ,\\[1mm]
			-\mathcal H_{n,\omega}+(1-\chi_{n,\omega})e_n+\int_0^E\partial_Eg_n(u;v_n)\,du
			&\text{on }\Omega_{n,\omega}.
		\end{cases}
		\]
		Under the smallness condition \eqref{lem12g},
		multiplying the remainder terms by \(A_n^{-1}\) (respectively
		\(\mathcal H_{n,\omega}^{-1}\)) gives a relative remainder of
		\(G^{\mathfrak s,K}\)-norm at most \(1/8\). Inverting via a Neumann
		series yields \eqref{lem12h}, and the chain
		rule \(\partial_E(g_n^{-1})=-g_n^{-2}\partial_Eg_n\) gives
		\eqref{lem12i}.
	\end{proof}
	
	The scalar inverse bound and the connector formulas
	\eqref{eq10} give the following estimate
	for the normalized entries of any connecting matrix
	\(U=R_\Phi A_{v,E}(y)R_\Theta\).
	
	\begin{lemma}
		\label{lem13}
		Let \(\lambda\geq1\), \(Y\geq0\), and let
		\(U=R_\Phi A_{v,E}(y)R_\Theta=(u_{ij})\) be a connecting matrix on an
		interval \(I\) with \(g=E-\lambda v-\tan\Phi-\tan\Theta\).
		Assume that in \(\mathcal G_{\mathfrak s,K}^{(1)}\),
		\begin{equation*}
			|g^{-1}|_{\mathfrak s,K;1}\leq C\lambda^{-1}e^{CY},\qquad
			1+|\tan\Phi|_{\mathfrak s,K;1}
			+|\tan\Theta|_{\mathfrak s,K;1}\leq A,\qquad
			|\cos\Phi\cos\Theta|\geq\frac12.
		\end{equation*}
		Then there exists \(C'>0\), depending only on \(C,\mathfrak s,A\), such
		that
		\begin{equation}\label{lem13a}
			1+|u_{11}^{-1}|_{\mathfrak s,K;1}
			+\sum_{\xi\in\{u_{12},u_{21},u_{22}\}}
			|\xi/u_{11}|_{\mathfrak s,K;1}
			\leq e^{C'(Y+1)}.
		\end{equation}
	\end{lemma}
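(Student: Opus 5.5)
The plan is to read off each normalized entry from the exact connector ratios \eqref{eq10}, and then bound every term using only the Banach algebra property of \(\mathcal G_{\mathfrak s,K}^{(1)}\) from Lemma~\ref{lem4}.

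Write \(p=\tan\Phi\) and \(t=\tan\Theta\). By hypothesis \(|p|_{\mathfrak s,K;1},|t|_{\mathfrak s,K;1}\le A\) and \(|g^{-1}|_{\mathfrak s,K;1}\le C\lambda^{-1}e^{CY}\le Ce^{CY}\), since \(\lambda\geq1\).

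\textbf{Step 1: the three ratio terms.} Formula \eqref{eq10} gives
\[
u_{12}/u_{11}=-t-(1+t^2)g^{-1},\qquad
u_{21}/u_{11}=p+(1+p^2)g^{-1},\qquad
u_{22}/u_{11}=-pt-(p+t)(1+pt)g^{-1}.
\]
Submultiplicativity then bounds each of these by a polynomial in \(A\) times \((1+|g^{-1}|_{\mathfrak s,K;1})\). The result is at most \(C(A)(1+Ce^{CY})\).

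\textbf{Step 2: the reciprocal \(u_{11}^{-1}\).} Here \(u_{11}^{-1}=(\cos\Phi\cos\Theta)^{-1}g^{-1}\), so I need a \(\mathcal G^{(1)}_{\mathfrak s,K}\) bound on \(\sec\Phi\sec\Theta\). This is the only step needing more than algebra.

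The pointwise bound \(|\cos\Phi\cos\Theta|\ge\tfrac12\) keeps \(\Phi\) and \(\Theta\) (with the small lifts) inside a compact set avoiding odd multiples of \(\pi/2\). On that set I will write
\[
\sec^2\Phi=1+p^2,\qquad \sec\Phi=\pm\sqrt{1+p^2},
\]
with the sign locally constant. Since \(1+p^2\ge1\), the function \(w\mapsto w^{-1/2}\) is holomorphic on a fixed neighborhood of \([1,1+A^2]\). Lemma~\ref{lem4}(2) therefore gives \(|(1+p^2)^{-1/2}|_{\mathfrak s,K;1}\le C(\mathfrak s,A)\) with the same \(K\), and the same holds for \(\Theta\). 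Multiplying,
\[
|u_{11}^{-1}|_{\mathfrak s,K;1}\le C(\mathfrak s,A)^2\,Ce^{CY}.
\]

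I expect this to be the main (mild) obstacle. The composition constant in Lemma~\ref{lem4}(2) must depend only on the norm bound \(A\) and not on \(K\). It must also apply at the level of the jet: the \(\epsilon\)-part \(D\Psi(F)\partial_E F\) is controlled via \(\partial_E p\), which is already included in \(|p|_{\mathfrak s,K;1}\). Lemma~\ref{lem4} states exactly this, so the step should close.

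\textbf{Step 3: summing.} Adding the bounds from Steps 1 and 2 gives a total of at most \(C_1(A,\mathfrak s,C)e^{CY}\le e^{C'(Y+1)}\), where \(C'\) is chosen with \(C'\ge C\) and \(C'\ge\log C_1\). This is \eqref{lem13a}.
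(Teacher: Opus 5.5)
Your proposal follows the paper's proof: bound \(\sec\Phi=\pm(1+\tan^2\Phi)^{1/2}\) and \(\sec\Theta=\pm(1+\tan^2\Theta)^{1/2}\) in \(\mathcal G_{\mathfrak s,K}^{(1)}\) via Lemma~\ref{lem4}, then substitute into \eqref{eq10} and use submultiplicativity. There is one small slip in Step~2: \(u_{11}^{-1}=\sec\Phi\sec\Theta\,g^{-1}\) needs \((1+p^2)^{+1/2}\), not \((1+p^2)^{-1/2}\) (which is \(|\cos\Phi|\)); since \(w\mapsto w^{1/2}\) is equally holomorphic on a neighborhood of \([1,1+A^2]\), the argument goes through unchanged.
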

	
	\begin{proof}
		Since \(\cos\Phi\) and \(\cos\Theta\) have constant signs and modulus at
		least \(1/2\), Lemma~\ref{lem4} bounds
		\(\sec\Phi=\pm(1+\tan^2\Phi)^{1/2}\) and
		\(\sec\Theta=\pm(1+\tan^2\Theta)^{1/2}\) in
		\(\mathcal G_{\mathfrak s,K}^{(1)}\). Substituting these into
		\eqref{eq10} and using the Banach
		algebra property yields \eqref{lem13a}.
	\end{proof}
	
	\subsection{Hyperbolic concatenation of return blocks}
	
	When concatenating scale-\(n\) return blocks \(D_{\Lambda_i}\) separated
	by connectors \(U_i\), the loss \(Y_{a(i)}\) at \(U_i\) is absorbed by
	the shorter of its two adjacent gaps. The resulting Gevrey
	bounds are independent of the number of factors \(m\) in the
	return word.
	
	\begin{lemma}
		\label{lem14}
		Let \(\lambda\geq e\), \(0\leq\varepsilon\leq1/4\), and consider a
		return product on an interval \(I\) in SVD form:
		\begin{equation*}
			B=\varepsilon_B R_{\theta_{m+1}}
			\bigl(D_{\Lambda_{m+1}}U_mD_{\Lambda_m}\cdots
			U_1D_{\Lambda_1}\bigr)R_{\phi_1},
			\qquad \varepsilon_B\in\{\pm1\}.
		\end{equation*}
		Here each connector \(U_i\) belongs to \(\mathrm{SL}(2,\R)\).
		Suppose each gap block \(i\in\{1,\ldots,m+1\}\) has type
		\(T_i\in\{S,L\}\) and physical length \(\ell_i\geq R_{T_i}\) with
		\(R_{\min}:=\min\{R_S,R_L\}\) large, and each connector \(U_i\) is
		assigned to \(a(i)\in\{i,i+1\}\) satisfying
		\begin{equation*}
			\ell_{a(i)}=\min\{\ell_i,\ell_{i+1}\},
			\qquad
			\#\{i:a(i)=j\}\leq2.
		\end{equation*}
		Set \(Y_i:=\ell_i^\vartheta\) with \(0<\vartheta<1\), and \(r:=m+\sum_{i=1}^{m+1}\ell_i\).
		
		\begin{enumerate}
			\item Assume first that for \(0\leq E\leq E_*\), the factors satisfy the pointwise bounds
			\begin{equation*}
				\log\Lambda_i\geq(1-\varepsilon)\ell_i\log\lambda,
				\qquad \|U_i\|\leq M,
				\qquad |(U_i)_{11}|\geq\kappa_i,
			\end{equation*}
			where \(\log^+\kappa_i^{-1}\leq C(1+Y_{a(i)})\) and
			\begin{equation*}
				M^2\kappa_i^{-2}
				e^{-2(1-\varepsilon)\min(\ell_i,\ell_{i+1})\log\lambda}
				\leq 2^{-6}.
			\end{equation*}
			Then
			\begin{equation}\label{lem14a}
				\frac{\log\|B\|}{r\log\lambda}
				\geq1-\varepsilon
				-C\bigl(R_S^{\vartheta-1}+R_L^{\vartheta-1}
				+R_S^{-1}+R_L^{-1}\bigr).
			\end{equation}
			
			\item Assume further that all factors belong to
			\(\mathcal G_{\mathfrak s,K}^{(1)}\), that the endpoint tangents
			\(\tan\phi_1,\tan\theta_{m+1}\) have norm at most \(A_1\), and
			that for some \(\mu\in[1/2,2/3]\),
			\begin{align}
				|\Lambda_i^{-1}|_{\mathfrak s,K;1}
				&\leq Ce^{-\mu\ell_i\log\lambda+CY_i},
				\label{lem14b}\\
				1+|(U_i)_{11}^{-1}|_{\mathfrak s,K;1}
				+\sum_{\xi\in\{(U_i)_{12},(U_i)_{21},(U_i)_{22}\}}
				|\xi/(U_i)_{11}|_{\mathfrak s,K;1}
				&\leq Ce^{CY_{a(i)}}.
				\label{lem14c}
			\end{align}
			Then there exist \(c',C'>0\), depending only on the structural
			constants \(C,A_1,\mathfrak s\) and independent of \(m\), such
			that for \(R_{\min}\) sufficiently large,
			\begin{align}
				|\Theta(B)-\theta_{m+1}|_{\mathfrak s,K;1}
				&\leq C'e^{-c'\ell_{m+1}\log\lambda+C'\ell_{m+1}^\vartheta},
				\label{lem14d}\\
				|\Phi(B)-\phi_1|_{\mathfrak s,K;1}
				&\leq C'e^{-c'\ell_1\log\lambda+C'\ell_1^\vartheta},
				\label{lem14e}
			\end{align}
			and the reciprocal of the full product norm satisfies
			\begin{equation}\label{lem14f}
				|\|B\|^{-1}|_{\mathfrak s,K;1}
				\leq \exp\Bigl\{-\bigl(\mu-C'(R_S^{\vartheta-1}+R_L^{\vartheta-1}
				+R_S^{-1}+R_L^{-1})\bigr)r\log\lambda\Bigr\}.
			\end{equation}
		\end{enumerate}
	\end{lemma}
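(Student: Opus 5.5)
The proof is an inductive concatenation from right to left. The key tool for each merge is Lemma~\ref{lem5}, applied in the dual-number algebra \(\mathcal G^{(1)}_{\mathfrak s,K}\). Absorbing the connector losses \(Y_{a(i)}\) through the assignment map \(a\) keeps all constants independent of \(m\).

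\textbf{Part (1).} This is the pointwise (\(C^0\)) statement, and the argument is a cone-field one. Write \(W_k:=D_{\Lambda_k}U_{k-1}\cdots U_1D_{\Lambda_1}\). I would show by induction that the expanding right-image direction of \(W_k\) lies within angle \(O(\Lambda_k^{-2}\cdot M^2\kappa_{k-1}^{-2})\) of \(e_1\). I would also show that
\[
\|W_{k+1}\|\ \ge\ \tfrac12\,\Lambda_{k+1}\,|(U_k)_{11}|\,\|W_k\|.
\]
Indeed, \(U_k\) maps a vector near \(e_1\) to one whose \(e_1\)-component is at least \(\kappa_k-M\cdot(\text{small angle})\geq\kappa_k/2\), and this uses the hypothesis \(M^2\kappa_i^{-2}e^{-2(1-\varepsilon)\min\ell\log\lambda}\le2^{-6}\). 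Taking logarithms gives
\[
\log\|B\|\ \ge\ (1-\varepsilon)\log\lambda\sum_i\ell_i\;-\;\sum_i\bigl(\log^+\kappa_i^{-1}+\log2\bigr).
\]
The error sum is at most \(C\sum_i(1+\ell_{a(i)}^\vartheta)\). Since each gap \(j\) receives at most two connectors, this is at most \(2C\sum_j(1+\ell_j^\vartheta)\). Now bound \(\ell_j^\vartheta\le R_{T_j}^{\vartheta-1}\ell_j\) and \(1\le R_{T_j}^{-1}\ell_j\). The missing \(m\log\lambda\) in \(r\) is handled the same way, since \(m\le\sum_j\ell_j/R_{\min}\). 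Dividing by \(r\log\lambda\) then gives \eqref{lem14a}.

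\textbf{Part (2).} I would run the same induction inside \(\mathcal G^{(1)}_{\mathfrak s,K}\). At step \(k\), apply Lemma~\ref{lem5} to \(X=D_{\nu}U_kD_{\mu'}\), where \(\mu'\) is the top singular value of the already assembled product and \(\nu=\Lambda_{k+1}\). The small outer rotation of the assembled product (its left angle deviation \(z_L\)) is first absorbed into \(U_k\). This changes the normalized entries of \(U_k\) by a factor \(1+O(|z_L|)\), which stays controlled by \eqref{lem14c} together with the inductive bound \(|z_L|\le C\mathcal H|\Lambda_k^{-1}|^2\).

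The domination condition \eqref{lem5b} reads
\[
Ce^{CY_{a(k)}}\max\bigl(|\Lambda_{k+1}^{-1}|,|\mu'^{-1}|\bigr)^2\le2^{-6}.
\]
It holds for \(R_{\min}\) large: \(|\Lambda_j^{-1}|\le Ce^{-\mu\ell_j\log\lambda+CY_j}\) with \(\mu\ge1/2\), and \(Y_{a(k)}=\ell_{a(k)}^\vartheta\) is sublinear in the adjacent gap lengths.

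Inequality \eqref{lem5c} then multiplies the inverse norms, up to a factor \(C|(U_k)_{11}^{-1}|\le Ce^{CY_{a(k)}}\). Iterating gives
\[
|\|B\|^{-1}|\le C^{m+1}\exp\Bigl\{-\mu\log\lambda\sum_j\ell_j+C\sum_jY_j+C\sum_iY_{a(i)}\Bigr\}.
\]
The same absorption as in Part (1) gives \eqref{lem14f}, with \(\mu\) replacing \(1-\varepsilon\). Here the constant \(C^{m+1}\) and the \(m\log\lambda\) term are absorbed using \(m\le r/R_{\min}\).

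For the angles \eqref{lem14d}--\eqref{lem14e}, note that Lemma~\ref{lem5} bounds each newly created left deviation by \(C\mathcal H|\Lambda_{k+1}^{-1}|^2\). This depends only on the last block and its connector, so the final left deviation is controlled by \(\ell_{m+1}\) and \(Y_{a(m)}\le Y_{m+1}\). The right angle is treated symmetrically with \(\ell_1\), building the product from the left instead. Conjugating back by \(R_{\theta_{m+1}}\) and \(R_{\phi_1}\), which have tangents bounded by \(A_1\), and using Lemma~\ref{lem4}, converts these tangent deviations into angle bounds.

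\textbf{Main obstacle.} The delicate step is the bookkeeping in Part (2). Each inductive application of Lemma~\ref{lem5} must not accumulate multiplicative constants in \(\mathcal H\) along the chain. I would handle this by renormalizing at every step: rewrite the assembled product as \(R_{\text{small}}D_{\mu'}R_{\text{small}}\), so that the fresh \(\mathcal H\) depends only on \(U_k\) and a bounded tangent, never on earlier steps. The only constant allowed to accumulate is the \(C^{m}\) factor coming from \eqref{lem5c}, which is absorbed as above.
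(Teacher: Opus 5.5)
Part (1) has a step that does not close as written. You bound the angle by $O(M^2\kappa_{k-1}^{-2}\Lambda_k^{-2})$. The hypothesis then only makes this angle $O(2^{-6})$. But you need $M\cdot(\text{angle})\le\kappa_k/2$, and $M/\kappa_k$ is not assumed bounded: in the application $M=4\lambda$, while $\kappa_k$ is smaller by a factor up to $e^{CY_{a(k)}}$.

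The fix is the paper's argument. Track the slope $s_k$ of $W_k e_1$ through the Riccati recursion; this is simpler than tracking the top singular direction of $W_k$, which would need a separate argument to stay in the cone. The recursion gives the sharper bound $|s_k|\le 2M\kappa_{k-1}^{-1}\Lambda_k^{-2}$, linear in $M/\kappa_{k-1}$. Then combine the hypotheses at the two neighbouring connectors $k-1$ and $k$ through a geometric mean. Write $A_i$ for the left side of the hypothesis at connector $i$. Since $\min(\ell_{k-1},\ell_k)+\min(\ell_k,\ell_{k+1})\le 2\ell_k$, you get $M|s_k|/\kappa_k\le 2M^2(\kappa_{k-1}\kappa_k)^{-1}e^{-2(1-\varepsilon)\ell_k\log\lambda}\le 2(A_{k-1}A_k)^{1/2}\le 2^{-5}$. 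This coupling of adjacent connectors is exactly what your sketch omits. The rest of Part (1), the absorption via $a(\cdot)$ and $m\le\sum_j\ell_j/R_{\min}$, is as in the paper.

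Part (2) is essentially the paper's proof. It shares the paper's key features:
\begin{itemize}
\item right-to-left concatenation;
\item absorption of the small left rotation into an effective connector $U_kR_{\alpha_k}$;
\item Lemma~\ref{lem5} applied at each merge;
\item the observation that, once $|bz|_{\mathfrak s,K;1}\le 1/4$, the normalized connector quantity depends only on the constant in \eqref{lem14c}, so the left-angle bound reproduces itself with a fixed constant (using $\ell_{a(k)}\le\ell_{k+1}$).
\end{itemize}

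The one genuine difference is the right endpoint angle. The paper stays with the single right-to-left induction and sums the increments $|\beta_{k+1}-\beta_k|\le C_*e^{C_*\ell_{a(k)}^\vartheta}|M_k^{-1}|^2$ from \eqref{lem5e}. These decay geometrically because $D_k-\ell_1\ge(k-1)R_{\min}$. You instead run the mirror concatenation, i.e.\ the same argument for $B^{\mathsf T}$. Its connectors $U_i^{\mathsf T}$ satisfy \eqref{lem14c} with the off-diagonal ratios swapped, and the assignment rule is symmetric. So $\Phi(B)-\phi_1$ is controlled by the last attached block $D_{\Lambda_1}$ and $Y_{a(1)}\le Y_1$. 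This is valid and avoids the telescoping sum, at the cost of running the induction twice.

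Finally, $\Theta(B)-\theta_{m+1}$ is exactly the left angle $\alpha_{m+1}$ of the middle product. So no conjugation by $R_{\theta_{m+1}}$ and no use of $A_1$ is needed for the angle bounds; $\arctan$ suffices.
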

	
	\begin{proof}
		When \(m=0\), \(B=\varepsilon_B R_{\theta_1}D_{\Lambda_1}R_{\phi_1}\)
		and all assertions follow. Assume \(m\geq1\).
		
		To prove part~(1), set \(\Gamma=(1-\varepsilon)\log\lambda\) and
		\(A_i=M^2\kappa_i^{-2}e^{-2\Gamma\min(\ell_i,\ell_{i+1})}\leq 2^{-6}\).
		Applying the middle product \(P=D_{\Lambda_{m+1}}U_m\cdots U_1D_{\Lambda_1}\)
		to \(e_1\), we write the vector after \(D_{\Lambda_i}\) as
		\(\xi_i(1,s_i)^{\mathsf T}\) with \(s_1=0\). For
		\(U_i=\left(\begin{smallmatrix}a_i&b_i\\c_i&d_i\end{smallmatrix}\right)\),
		the projective slope satisfies the Riccati recursion
		\begin{equation*}
			s_{i+1}=\Lambda_{i+1}^{-2}
			\frac{c_i+d_i s_i}{a_i+b_i s_i},
			\qquad 1\leq i\leq m.
		\end{equation*}
		An induction using this recursion gives
		\(|s_i|\leq 2M\kappa_{i-1}^{-1}\Lambda_i^{-2}\leq 2A_{i-1}\leq 2^{-5}\)
		for \(2\leq i\leq m+1\). The denominator loss at the first connector
		is \(\eta_1=0\). For \(2\leq i\leq m\), the inequality
		\(\min(\ell_{i-1},\ell_i)+\min(\ell_i,\ell_{i+1})\leq 2\ell_i\)
		gives
		\[
		\eta_i:=\frac{M|s_i|}{\kappa_i}
		\leq \frac{2M^2}{\kappa_{i-1}\kappa_i}e^{-2\Gamma\ell_i}
		\leq 2(A_{i-1}A_i)^{1/2}\leq 2^{-5}<\frac14.
		\]
		Thus \(|\xi_{i+1}|\geq\Lambda_{i+1}|\xi_i||(U_i)_{11}|(1-\eta_i)\), and
		since \(\|B\|=\|P\|\geq|\xi_{m+1}|\) and \(\log(1-\eta_i)\geq-2\eta_i\),
		summing over \(i\) gives
		\[
		\log\|B\|
		\geq(1-\varepsilon)\sum_{i=1}^{m+1}\ell_i\log\lambda
		-C\sum_{i=1}^m\bigl(1+\ell_{a(i)}^\vartheta\bigr).
		\]
		Because each gap receives at most two assignments,
		\(\ell_j^\vartheta\leq R_{T_j}^{\vartheta-1}\ell_j\), and
		\(m\leq(R_S^{-1}+R_L^{-1})\sum_i\ell_i\), dividing by \(r\log\lambda\)
		yields \eqref{lem14a}.
		
		For part~(2), we concatenate the factors from the right. Define
		\(P_k=D_{\Lambda_k}U_{k-1}\cdots U_1D_{\Lambda_1}\) for
		\(1\leq k\leq m+1\), with SVD
		\(P_k=\varepsilon_k R_{\alpha_k}D_{M_k}R_{\beta_k}\), and define
		\(D_k=\sum_{j=1}^k\ell_j\) and \(\Xi_k=\sum_{j=1}^k(1+\ell_j^\vartheta)\).
		Let \(C\geq1\) be a common fixed constant in
		\eqref{lem14b}--\eqref{lem14c}.
		Choose \(C_*\geq1\), depending only on the structural constants,
		large enough to bound the normalized entries of the effective connectors
		when \(|bz|_{\mathfrak s,K;1}\leq1/4\) below, together with the two-block estimates of
		Lemma~\ref{lem5}. Set
		\[
		C_{**}:=C+2C_*+\log(CC_*)+1.
		\]
		These constants are fixed before \(R_{\min}\) is chosen. We prove
		simultaneously that
		\begin{align}
			|M_k^{-1}|_{\mathfrak s,K;1}
			&\leq\exp\bigl\{-\mu D_k\log\lambda+C_{**}\Xi_k\bigr\},
			\label{eq21a}\\
			|\tan\alpha_k|_{\mathfrak s,K;1}
			&\leq C_*\exp\bigl\{-\mu\ell_k\log\lambda+C_*\ell_k^\vartheta\bigr\}.
			\label{eq21b}
		\end{align}
		Write \(\delta_R=R_{\min}^{-1}+R_{\min}^{\vartheta-1}\). Increase
		\(R_{\min}\) so that
		\[
		(2C_{**}+C_*)\delta_R\leq\frac\mu2,
		\qquad 2C R_{\min}^{\vartheta-1}
		+\frac{2\log C}{R_{\min}}\leq\mu.
		\]
		All subsequent lower bounds on \(R_{\min}\) involve only these fixed
		constants and are uniform for \(\lambda\geq e\). For \(k=1\),
		\(M_1=\Lambda_1\) and \(\alpha_1=\beta_1=0\), so both estimates hold.
		At step \(k\),
		write \(U_k=u_{11}\left(\begin{smallmatrix}1&b\\c&d\end{smallmatrix}\right)\)
		and \(z=\tan\alpha_k\), so that \(P_{k+1}=\varepsilon_k D_{\Lambda_{k+1}}V_kD_{M_k}R_{\beta_k}\)
		with effective connector \(V_k=U_kR_{\alpha_k}\). Since
		\(\ell_{a(k)}\leq\ell_k\), \eqref{lem14c} and
		\eqref{eq21b} imply that for \(R_{\min}\)
		sufficiently large, \(|bz|_{\mathfrak s,K;1}+|cz|_{\mathfrak s,K;1}+|dz|_{\mathfrak s,K;1}\leq1/4\).
		Using the identities
		\[
		(V_k)_{11}^{-1}=\sqrt{1+z^2}\,u_{11}^{-1}(1+bz)^{-1},\quad
		\frac{(V_k)_{12}}{(V_k)_{11}}=\frac{b-z}{1+bz},\quad
		\frac{(V_k)_{21}}{(V_k)_{11}}=\frac{c+dz}{1+bz},\quad
		\frac{(V_k)_{22}}{(V_k)_{11}}=\frac{d-cz}{1+bz},
		\]
		we expand the inverse in a Neumann series and the square root in a
		binomial series to bound all four normalized quantities
		by \(C_*e^{C_*\ell_{a(k)}^\vartheta}\), with constants independent of
		\(k,m\). The constants in this bound depend only on
		\eqref{lem14c} and on the bound \(|bz|_{\mathfrak s,K;1}\leq1/4\),
		so they are covered by the choice of \(C_*\).
		Since \(\ell_{a(k)}\leq\ell_{k+1}\), \(\ell_{a(k)}\leq D_k\), and
		\(\Xi_k\leq\delta_R D_k\), the preceding choice of \(R_{\min}\) gives
		\[
		\mathcal H(V_k)
		\max\bigl\{|M_k^{-1}|_{\mathfrak s,K;1},
		|\Lambda_{k+1}^{-1}|_{\mathfrak s,K;1}\bigr\}^{2}
		\leq C_*C^2e^{-\mu R_{\min}}\leq2^{-6},
		\]
		after a further increase of \(R_{\min}\). Here \(\mathcal H(V_k)\)
		is the normalized connector quantity in
		\eqref{lem5a}. Thus the domination condition
		of Lemma~\ref{lem5} holds uniformly at every
		concatenation. That lemma yields
		\[
		|M_{k+1}^{-1}|_{\mathfrak s,K;1}
		\leq C_*|M_k^{-1}|_{\mathfrak s,K;1}|\Lambda_{k+1}^{-1}|_{\mathfrak s,K;1}e^{C_*\ell_{a(k)}^\vartheta},
		\qquad
		|\tan\alpha_{k+1}|_{\mathfrak s,K;1}
		\leq C_*e^{C_*\ell_{a(k)}^\vartheta}|\Lambda_{k+1}^{-1}|_{\mathfrak s,K;1}^2.
		\]
		Iterating the first inequality over the preceding concatenations gives
		\[
		\begin{aligned}
			|M_{k+1}^{-1}|_{\mathfrak s,K;1}
			&\leq C_*^k C^{k+1}
			\exp\Bigl\{-\mu D_{k+1}\log\lambda
			+C\sum_{j=1}^{k+1}\ell_j^\vartheta
			+C_*\sum_{i=1}^{k}\ell_{a(i)}^\vartheta\Bigr\}\\
			&\leq\exp\{-\mu D_{k+1}\log\lambda+C_{**}\Xi_{k+1}\}.
		\end{aligned}
		\]
		The second inequality follows because each gap is paired with at most two connectors and
		\(k\log C_*+(k+1)\log C\leq(k+1)\log(CC_*)\).
		For the slope estimate, \(\ell_{a(k)}\leq\ell_{k+1}\) and the
		second condition on \(R_{\min}\) allow us to absorb
		\(2C\ell_{k+1}^{\vartheta}+2\log C\) into
		\(\mu\ell_{k+1}\log\lambda\). This proves both
		\eqref{eq21a} and
		\eqref{eq21b} at \(k+1\), with the
		constants fixed above.
		
		At \(k=m+1\), \eqref{eq21b} gives the left
		endpoint bound \eqref{lem14d}. To obtain the
		full inverse bound from \eqref{eq21a}, put
		\(\Delta=R_S^{\vartheta-1}+R_L^{\vartheta-1}+R_S^{-1}+R_L^{-1}\).
		Then \(\Xi_{m+1}\leq\Delta\sum_i\ell_i\) and
		\(m\leq\Delta\sum_i\ell_i\). Hence the cumulative error and the
		\(m\) connector sites are both absorbed by \(C'\Delta r\log\lambda\),
		with \(C'\geq C_{**}+1\), proving
		\eqref{lem14f}.
		Finally, for the right endpoint angle \(\Phi(B)=\phi_1+\beta_{m+1}\),
		the right-slope bound \eqref{lem5e} of
		Lemma~\ref{lem5} gives
		\[
		|\beta_{k+1}-\beta_k|_{\mathfrak s,K;1}
		\leq C_*e^{C_*\ell_{a(k)}^\vartheta}|M_k^{-1}|_{\mathfrak s,K;1}^2.
		\]
		Since \(D_k-\ell_1\geq(k-1)R_{\min}\),
		\(\Xi_k-1-\ell_1^\vartheta\leq(R_{\min}^{-1}+R_{\min}^{\vartheta-1})(D_k-\ell_1)\),
		and \(\ell_{a(k)}^\vartheta\leq R_{\min}^{\vartheta-1}(D_k-\ell_1)\)
		for \(k\geq2\), the first condition on \(R_{\min}\) yields
		\[
		|\beta_{k+1}-\beta_k|_{\mathfrak s,K;1}
		\leq C_*e^{-2\mu\ell_1\log\lambda+(2C_{**}+C_*)(1+\ell_1^\vartheta)}\lambda^{-\mu(k-1)R_{\min}}.
		\]
		The sum of these increments over \(1\leq k\leq m\) is bounded by
		\[
		\frac{C_*}{1-\lambda^{-\mu R_{\min}}}
		e^{-2\mu\ell_1\log\lambda
			+(2C_{**}+C_*)(1+\ell_1^\vartheta)}.
		\]
		Increasing \(R_{\min}\) once more makes the denominator at least
		\(1/2\), uniformly for \(\lambda\geq e\). This proves
		\eqref{lem14e} with constants independent of
		the number of factors. Every series and product above is taken in
		\(\mathcal G_{\mathfrak s,K}^{(1)}\), so \(K\) remains unchanged.
	\end{proof}

	\section{The inductive step}
	\label{sec7}
	
	\subsection{Inductive setup and hypotheses}
	
	At scale \(n\), for a potential \(w\) on a regular \(I_n\)-return
	atom, we write the normalized connecting function in terms of the
	singular angles as
	\begin{equation*}
		g_n(E,x;w)=E-\lambda w(x)
		-\tan\Phi_n(E,x;w)-\tan\Theta_n(E,x;w).
	\end{equation*}
	Here \(\Phi_n(E,x;w)=\phi_{I_n}^+(E,x;w)\) and
	\(\Theta_n(E,x;w)=\theta_{I_n}^-(E,x;w)\) are the right SVD angle of the
	forward open gap block and the left SVD angle of the backward open gap
	block, respectively.
	We use two regions of \(I_n\): the outer region where
	\(\chi_{n,0}<1\), and the part of the central plateau lying outside
	\(I_{n+1}\):
	\begin{equation*}
		\mathcal O_n:=\{x\in I_n:\chi_{n,0}(x)<1\},\qquad
		\mathcal D_n:=\{x\in I_n\setminus I_{n+1}:
		\chi_{n,0}(x)=1\}.
	\end{equation*}
	The outer region \(\mathcal O_n\) contains
	\(\Omega_{n,0}\setminus\mathcal K_n\) and both side supports
	\(\Omega_{n,\pm}\). At the next scale,
	\(\mathcal O_{n+1}=(\mathcal O_{n+1}\cap\mathcal O_n)\mathbin{\dot\cup}(\mathcal O_{n+1}\cap\mathcal K_n)\).
	We denote by \(\mathscr C_n\) the finite family of open intervals
	obtained by subdividing the return atoms of \(I_n\) and \(I_{n+1}\) at
	the cutoff support and plateau boundaries at scales \(n,n+1\) and their
	finite pullbacks along the scale-\((n+1)\) return words. Across the cutoff boundaries, the return times to \(I_n\) and \(I_{n+1}\) remain constant, so the open gap blocks and their SVD branches extend smoothly,
	whereas return-partition boundaries remain excluded in
	\(\mathcal Z_{\rm ret}\). On each component \(\mathcal A\in\mathscr C_n\),
	we assign the gap index \(T_n(\mathcal A)=L\) if
	\(\mathcal A\subset\mathcal A_n^0\) and \(T_n(\mathcal A)=S\) if
	\(\mathcal A\subset I_n\setminus\overline{\mathcal A_n^0}\), matching the
	pairing in Lemma~\ref{lem7}.
	
	The constants in the induction are chosen in the following order:
	\begin{itemize}
		\item By Lemma~\ref{lem4}, on the domain \(|\Phi|_{\mathfrak s,K;1}+|\Theta|_{\mathfrak s,K;1}\leq1/2\)
		(where \(|\cos\Phi\cos\Theta|\geq1/2\)), there is a constant
		\(C_{\tan}\geq4\), depending only on \(\mathfrak s\), such that
		\(|\sec^2\Phi|_{\mathfrak s,K}\,\leq C_{\tan}/2\) and
		\(|\tan\Phi'-\tan\Phi|_{\mathfrak s,K;1}\leq C_{\tan}|\Phi'-\Phi|_{\mathfrak s,K;1}\).
		\item Fix \(B_0\geq1\) large enough to bound all scalar inverses
		obtained directly from
		Lemma~\ref{lem12} and
		\eqref{lem9c}, including the initial
		correction and the central reference on
		\(\mathcal O_{n+1}\cap\mathcal K_n\), where
		\(|x|\geq a_0\rho_{n+1}\). On this last region the type is \(L\);
		elsewhere it is the assigned type \(T\in\{S,L\}\). In each case the
		bound is
		\begin{equation*}
			\mathcal B_{n,T}:=B_0\lambda^{-1}e^{B_0Y_{n,T}}.
		\end{equation*}
		\item With \(B_0\) and \(C_{\tan}\) fixed, choose \(0<c_0\leq1\leq C_0\)
		using Lemmas~\ref{lem13} and
		\ref{lem14} to bound both the endpoint
		angle increments \(|\Phi_{n+1}-\Phi_n|_{\mathfrak s,K_\lambda;1}+|\Theta_{n+1}-\Theta_n|_{\mathfrak s,K_\lambda;1}\)
		and their tangent differences
		\(|(\tan\Phi_{n+1}-\tan\Phi_n)+(\tan\Theta_{n+1}-\tan\Theta_n)|_{\mathfrak s,K_\lambda;1}\)
		uniformly for \(\mu_n\in[1/2,2/3]\).
		\item The scale-\(n\) correction majorant (for \(n\geq n_0+1\)) is defined by
		\begin{equation*}
			\mathfrak d_n:=C\Bigl(e^{-c\rho_n^{-\nu}+C\rho_n^{-\mathfrak b}}
			+\lambda^{-1}e^{-c_0 R_{n-1,L}\log\lambda
				+C_0 R_{n-1,L}^{\vartheta}
				+C\rho_n^{-\mathfrak b}}\Bigr).
		\end{equation*}
		The initial correction \(\delta_{n_0}\) at scale \(n_0\) is estimated
		separately in Section~\ref{sec8}.
	\end{itemize}
	
	To propagate the hyperbolicity and reciprocal estimates across scales, the induction tracks five coupled properties: \textup{(H0)} bounds the global \(G^{\mathfrak s,K_\lambda}\) and \(C^0\) norms of the potential; \textup{(H1)} maintains uniform exponential growth of the open return-gap blocks; \textup{(H2)} keeps the singular angles near the coordinate axes and ensures uniform energy transversality \(\partial_E g_n\approx 1\); \textup{(H3)} preserves a negative upper bound for \(g_n(0,x;v_{n-1})\) outside the core while controlling its deviation from \(-\lambda h\) on \(\widetilde I_n\); and \textup{(H4)} bounds the Gevrey norm of \(g_n(E,x;v_{n-1})^{-1}\) on the outer region \(\mathcal O_n\).
	
	In \textup{(H3)}, \textup{(H4)}, and Conclusion~\textup{(C2)} below, \(\{\sigma_n\}_{n\geq n_0}\), \(\{\bar\sigma_n\}_{n\geq n_0}\), and \(\{p_n\}_{n\geq n_0}\) denote deterministic sequences of floor and Neumann-series coefficients defined recursively in the proof (see \eqref{eq22a}--\eqref{eq26}) that satisfy the uniform bounds
	\[
	\frac14\leq\sigma_{n+1}\leq\bar\sigma_n\leq\sigma_n\leq\frac12,
	\qquad
	1=p_{n_0}\leq p_n\leq p_{n+1}<2
	\qquad(n\geq n_0).
	\]
	
	\begin{proposition}
		\label{porp1}
		Let \(n\geq n_0+1\) and let \(v_{n-1}\) be the potential before the
		scale-\(n\) correction. Assume the following inductive hypotheses at
		scale \(n\):
		
		\begin{enumerate}[label=(H\arabic*),start=0]
			\item For a fixed Gevrey parameter \(K_\lambda\geq K_*\) and a
			finite bound \(M_\lambda\),
			\begin{equation}\label{porp1a}
				|v_{n-1}|_{\mathfrak s,K_\lambda;\T}
				+\sum_{r\geq n}\mathfrak d_r\leq M_\lambda,\qquad
				\|v_{n-1}\|_{C^0(\T)}+\sum_{r\geq n}\mathfrak d_r\leq 2.
			\end{equation}
			Consequently, for all \(0\leq E\leq E_{n_0}\) and \(\lambda\geq2\),
			\begin{equation}\label{porp1b}
				\sup_{x\in\T}
				\max\bigl\{\|A_{v_{n-1},E}(x)\|,
				\|A_{v_{n-1},E}(x)^{-1}\|\bigr\}
				\leq 4\lambda .
			\end{equation}
			
			\item Every regular open \(I_n\)-gap block \(G\) of type
			\(T\in\{S,L\}\) and length \(\ell\) has a simple top singular value for
			\(0\leq E\leq E_n\), and satisfies
			\begin{align*}
				\log\|G(E)\|&\geq(1-\varepsilon_n)\ell\log\lambda,
				\qquad 0\leq\varepsilon_n\leq\frac14,
				\\
				|\|G\|^{-1}|_{\mathfrak s,K_\lambda;1}
				&\leq e^{-\mu_n\ell\log\lambda},
				\qquad \frac12\leq \mu_n\leq\frac23.
			\end{align*}
			
			\item On \([0,E_n]\times\mathscr C_n\), the SVD angle lifts satisfy
			\begin{equation*}
				\max_{\mathcal A\in\mathscr C_n}
				\bigl(
				|\Phi_n|_{\mathfrak s,K_\lambda;1;\mathcal A}
				+|\Theta_n|_{\mathfrak s,K_\lambda;1;\mathcal A}
				\bigr)
				\leq\frac12,
				\qquad
				|\cos\Phi_n\cos\Theta_n|\geq\frac12,
			\end{equation*}
			and their energy derivatives satisfy
			\begin{equation*}
				\max_{\mathcal A\in\mathscr C_n}
				\bigl(
				|\partial_E\Phi_n|_{\mathfrak s,K_\lambda;\mathcal A}
				+|\partial_E\Theta_n|_{\mathfrak s,K_\lambda;\mathcal A}
				\bigr)\leq Z_n,
				\qquad C_{\tan}Z_n\leq\frac18.
			\end{equation*}
			
			\item On the outer region \((I_n\setminus\mathcal K_n^\circ)\cap\Omega_{\rm ret}\),
			\(g_n(0,x;v_{n-1})\) satisfies the signed floor
			\begin{equation*}
				g_n(0,x;v_{n-1})\leq-F_n^-(x)<0,
				\qquad
				F_n^-(x)\geq \sigma_n\lambda e^{-\beta|x|^{-\nu}},
			\end{equation*}
			while on \(\widetilde I_n\), the zero-energy profile defect satisfies
			\begin{equation}\label{porp1c}
				|g_n(0;v_{n-1})+\lambda h|_{\mathfrak s,K_\lambda;\widetilde I_n}
				\leq D_n:=C_0e^{-c_0R_{n-1,L}\log\lambda+C_0R_{n-1,L}^\vartheta}.
			\end{equation}
			
			\item On every component \(\mathcal A\in\mathscr C_n\) contained
			in \(\mathcal O_n\), the reciprocal \(g_n(E;v_{n-1})^{-1}\) satisfies
			\begin{equation}\label{porp1d}
				|g_n(E;v_{n-1})^{-1}|_{\mathfrak s,K_\lambda;\mathcal A}
				\leq p_n\mathcal B_{n,T_n(\mathcal A)},
				\qquad 0\leq E\leq E_{n+1}.
			\end{equation}
		\end{enumerate}
		
		Define \(\delta_n\) and \(v_n=v_{n-1}+\delta_n\) by
		\eqref{lem12b}--\eqref{lem12c}.
		Then for \(\lambda\) and \(n_0\) sufficiently large, the following
		conclusions hold:
		\begin{enumerate}[label=(C\arabic*)]
			\item On the central core and side plateaus, \(\delta_n\) prescribes the zero-energy profile and resonant zeros:
			\begin{equation*}
				g_n(0,x;v_n)=-\lambda h(x)\quad(x\in \mathcal K_n\cap\Omega_{\rm ret}),
				\qquad
				g_n(E_n,x;v_n)=0\quad(x\in P_n\cap\Omega_{\rm ret}).
			\end{equation*}
			
			\item For \(0\leq E\leq E_{n+1}\)
			and \(x\in (I_n\setminus\widetilde I_{n+1})\cap\Omega_{\rm ret}\), the corrected connecting function and connecting matrix entry satisfy
			\begin{equation}\label{porp1e}
				g_n(E,x;v_n)
				\leq-\bar\sigma_n\lambda e^{-\beta|x|^{-\nu}}<0,
				\qquad
				|(C_{I_n}(E,x;v_n))_{11}|
				\geq \frac12\bar\sigma_n\lambda e^{-\beta|x|^{-\nu}}.
			\end{equation}
			
			\item Every regular open
			\(I_{n+1}\)-gap block \(B\) of length \(r\) satisfies, for
			\(0\leq E\leq E_{n+1}\),
			\begin{equation*}
				\log\|B(E)\|\geq(1-\varepsilon_{n+1})r\log\lambda,
				\qquad
				|\|B\|^{-1}|_{\mathfrak s,K_\lambda;1}
				\leq e^{-\mu_{n+1}r\log\lambda},
			\end{equation*}
			where \(\varepsilon_{n+1}\leq\varepsilon_n+C\Delta_n^*\) and
			\(\mu_{n+1}:=\mu_n-C'\Delta_n^*\).
			
			\item On every component
			\(\mathcal A\in\mathscr C_n\) contained in \(I_{n+1}\), with gap index \(T=T_n(\mathcal A)\in\{S,L\}\), uniformly for \(0\leq E\leq E_{n+1}\),
			\begin{equation*}
				|\Phi_{n+1}-\Phi_n|_{\mathfrak s,K_\lambda;1;\mathcal A}
				+|\Theta_{n+1}-\Theta_n|_{\mathfrak s,K_\lambda;1;\mathcal A}
				\leq C_0e^{-c_0R_{n,T}\log\lambda+C_0R_{n,T}^\vartheta},
			\end{equation*}
			and the same bound holds with \(\Phi_{n+1}-\Phi_n\) and
			\(\Theta_{n+1}-\Theta_n\) replaced by \(\tan\Phi_{n+1}-\tan\Phi_n\) and
			\(\tan\Theta_{n+1}-\tan\Theta_n\).
			Here both scales are evaluated at \((E,x;v_n)\). Since \(\delta_n\)
			is supported in \(I_n\), the scale-\(n\) open gap blocks and their chosen
			singular angles agree with those for \(v_{n-1}\) by
			Lemma~\ref{lem3}.
			
			\item The energy derivative satisfies \(\partial_Eg_n(E;v_n)=\partial_Eg_n(E;v_{n-1})\), and at scale \(n+1\),
			\begin{equation*}
				Z_{n+1}\leq Z_n+C_0\sum_{T\in\{S,L\}}
				e^{-c_0R_{n,T}\log\lambda+C_0R_{n,T}^\vartheta}.
			\end{equation*}
			
			\item The perturbation satisfies \(|\delta_n|_{\mathfrak s,K_\lambda}\leq \mathfrak d_n\), with
			\(\sum_{n\geq n_0+1}\mathfrak d_n<\infty\), preserving \textup{(H0)} at
			scale \(n+1\).
			
			\item All subsequent corrections \(\delta_m\)
			(\(m>n\)) are supported in \(\widetilde I_m\Subset\mathcal K_n\), so that
			\(g_n(E_n,x;v_\infty)=0\) on \(P_n\cap\Omega_{\rm ret}\) and
			\eqref{porp1e} remains valid for \(v_\infty\).
			
			\item For \(0\leq E\leq E_{n+1}\), the reciprocal of the corrected connecting function satisfies
			\begin{equation}\label{porp1f}
				|g_n(E;v_n)^{-1}|_{\mathfrak s,K_\lambda;\mathcal A}
				\leq
				\begin{cases}
					p_n\mathcal B_{n,T_n(\mathcal A)},&\mathcal A\subset\mathcal O_n,\\
					\mathcal B_{n,L},&\mathcal A\subset\mathcal D_n,
				\end{cases}
			\end{equation}
			and in the dual-number norm on either region (with \(T=L\) on \(\mathcal D_n\)),
			\begin{equation}\label{porp1g}
				|g_n(E;v_n)^{-1}|_{\mathfrak s,K_\lambda;1;\mathcal A}
				\leq (2B_0+60B_0^2)\lambda^{-1}e^{2B_0Y_{n,T}}.
			\end{equation}
		\end{enumerate}
	\end{proposition}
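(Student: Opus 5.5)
The plan is to derive the conclusions in the order (C1), (C6), (C7), (C2), (C8), (C3), (C4), (C5). Each step feeds the next, and the constants \(B_0,C_{\tan},c_0,C_0\) are those already fixed before the statement.

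\textbf{Steps (C1), (C6), (C7).} First apply Lemma~\ref{lem10} with \(I=I_n\), \(H_0=\lambda h\), \(A=A_n=\lambda h\), \(E_0=E_n\) and \(E_*=E_n\); the SVD branches required there exist by (H1), (H2) and Lemma~\ref{lem1}. Formulas \eqref{lem10c}--\eqref{lem10d} give (C1) directly. Since \(\delta_n\) is supported in \(I_n\), Lemma~\ref{lem3} gives \(\partial_Eg_n(E;v_n)=\partial_Eg_n(E;v_{n-1})\), which is the first half of (C5).

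For (C6), insert into \eqref{eq20} the following bounds: \eqref{porp1c} for \(e_n\) on \(\widetilde I_n\), \eqref{eqcut} for the cutoffs, and \eqref{lem12f} for \(\mathcal H_{n,\omega}-A_n\). To use \eqref{lem12f} we need \eqref{lem12a}, with \(\delta_E\le 2C_{\tan}Z_n\le 1/4\); the stated \(1/8\) follows after tightening \(Z_n\), and this comes from (H2) and \(\partial_Eg_n=1-\sec^2\Phi_n\partial_E\Phi_n-\sec^2\Theta_n\partial_E\Theta_n\). The result is \(|\delta_n|\le\mathfrak d_n\), with \(D_n\) producing the second summand of \(\mathfrak d_n\). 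Summability follows from \eqref{eq13}: \(\rho_n^{-\mathfrak b}\ll\rho_n^{-\nu}\) and \(R_{n-1,L}^\vartheta\ll R_{n-1,L}\log\lambda\). Hence (H0) propagates. (C7) is the support nesting \eqref{eqsupp} combined with part~(4) of Lemma~\ref{lem10}.

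\textbf{Steps (C2) and (C8).} Split \((I_n\setminus\widetilde I_{n+1})\cap\Omega_{\rm ret}\) into four pieces: \(\mathcal K_n\setminus\widetilde I_{n+1}\), where \(g_n(0)=-\lambda h\); the central transition zone, handled by \eqref{lem10e}; the side buffers, handled by \eqref{lem10f} with \(|(1-\chi)e_n|\le D_n\le\frac14\mathcal H\); and the collar, handled by (H3). On each piece, use \eqref{eq17} to get a floor of the form \(\sigma\lambda e^{-\beta|x|^{-\nu}}\).

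Then add the energy drift \(\int_0^E\partial_Eg_n\le\frac98E_{n+1}\). By \eqref{eq18} this is at most \(\frac1{100}\lambda e^{-\beta(a_3\rho_{n+1})^{-\nu}}\), which is a small fraction of the floor because \(|x|\ge a_3\rho_{n+1}\) off \(\widetilde I_{n+1}\). This defines \(\bar\sigma_n=\sigma_n(1-O(\cdot))\). The bound on \((C_{I_n})_{11}\) then follows from Lemma~\ref{lem2} and \(|\cos\cos|\ge\frac12\).

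For (C8), on \(\mathcal O_n\) apply Lemma~\ref{lem12}: \eqref{lem12h} holds once \eqref{lem12g} is checked. The check uses \(D=D_n\), \(S(a_0\rho_n)\le C\rho_n^{-\nu}\ll R_{n-1,L}\log\lambda\) (via \eqref{eq13}), and \(E_{n+1}/E_n\) small. On the collar and the remaining parts, (H4) persists with a Neumann factor, which updates \(p_n\). On \(\mathcal D_n\), write \(g_n(E;v_n)=-\lambda h(1-\lambda^{-1}h^{-1}\int_0^E\partial_Eg)\) and use \eqref{lem9c} on \(|x|\ge a_0\rho_{n+1}\), together with \eqref{eq19a}, to show that the relative remainder \(E_{n+1}e^{S(a_0\rho_{n+1})}/\lambda\) is tiny. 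Finally, \eqref{porp1g} follows from \eqref{lem12i} and \(|\partial_Eg_n|\le 1+\frac18\).

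\textbf{Steps (C3), (C4), (C5) and the main obstacle.} Factor each \(I_{n+1}\)-gap by \eqref{lem7a}, rewriting every scale-\(n\) gap in SVD form, so that the connectors are \(U_i=R_{\Phi_n}A(y_i)R_{\Theta_n}\). Lemma~\ref{lem13}, fed by (C8) and (H2), gives \eqref{lem14c} with \(Y=Y_{n,T}\); the assignment rule in Lemma~\ref{lem7} ensures that losses are charged to the shorter adjacent gap. The pointwise hypotheses of part~(1) of Lemma~\ref{lem14} come from (C2) and \eqref{porp1b}: we have \(\log\kappa_i^{-1}\le\beta|y_i|^{-\nu}+C\log\lambda\), and since \(|y_i|\ge\frac17\rho_n\) or \(|y_i|\ge\rho_{n+1}\) as appropriate, \eqref{eq13} bounds this by \(C Y_{a(i)}\). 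Lemma~\ref{lem14} then yields (C3), with the loss \(C\Delta_n^*\), and the angle bounds \eqref{lem14d}--\eqref{lem14e}, which give (C4). The tangent differences in (C4) follow via \(C_{\tan}\), and the bound on \(Z_{n+1}\) in (C5) is the \(\epsilon\)-component of (C4).

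I expect the main obstacle to be the bookkeeping that keeps \(K_\lambda\) fixed and the losses sublinear. This requires checking that every exponential loss — namely \(e^{C\rho_n^{-\nu}}\) from the reciprocals, \(e^{C\rho_n^{-\mathfrak b}}\) from the cutoffs, and \(\beta|y|^{-\nu}\) from the connectors — is dominated by \(C Y_{n,T}=CR_{n,T}^\vartheta\). This is exactly where \eqref{eq12} (in particular \(\tau\nu<\vartheta\)) and Lemma~\ref{lem6} enter. The same accounting must also ensure that \(\sigma_n\), \(p_n\) and \(\mu_n\) change only by summable amounts.
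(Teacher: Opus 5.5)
Your proposal follows the paper's proof essentially step for step, and only the order of the steps differs:
\begin{itemize}
\item Lemmas~\ref{lem10} and~\ref{lem12}, with \(H_0=A=\lambda h\) and \(E_0=E_n\), give \textup{(C1)}, \textup{(C6)} and \textup{(C8)}.
\item The piecewise sign analysis plus an energy drift gives \textup{(C2)}: \(g_n(0)=-\lambda h\) on the core, \eqref{lem10e} on the central transition, \eqref{lem10f} on the side supports, and \textup{(H3)} on the untouched region.
\item The decomposition of Lemma~\ref{lem7}, fed into Lemmas~\ref{lem13} and~\ref{lem14}, gives \textup{(C3)}--\textup{(C5)}.
\item \textup{(C7)} follows from \eqref{eqsupp}.
\end{itemize}

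\textbf{The energy drift in (C2) is not summable as written.} You bound \(\int_0^E\partial_Eg_n\le\frac98E_{n+1}\) using \eqref{eq18}. On \(|x|\ge a_3\rho_{n+1}\) this only gives \(\frac98E_{n+1}\le\frac{9}{800}\lambda e^{-\beta|x|^{-\nu}}\), which is a \emph{fixed} fraction of the floor at every scale. So \(\bar\sigma_n\), and hence \(\sigma_{n+1}\), loses a non-summable amount per scale. It drops below \(1/4\) after a bounded number of scales, and the uniform bounds \(\frac14\le\sigma_{n+1}\le\bar\sigma_n\le\sigma_n\) on which the induction relies are lost. You note at the end that the losses must be summable, but the estimate you use does not give this.

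The fix is to use the exact form of \(E_{n+1}\) together with the upper bound on \(\beta\) in \eqref{eqEn}. Since \(\beta a_3^{-\nu}<\pi^{-\nu}\vartheta_d^{-\nu}\),
\[
\zeta_n^E:=\frac{2E_{n+1}}{\lambda}e^{\beta(a_3\rho_{n+1})^{-\nu}}
=2\exp\bigl\{-S(\vartheta_d\rho_{n+1})+\beta(a_3\rho_{n+1})^{-\nu}\bigr\}
\le Ce^{-c\rho_{n+1}^{-\nu}}.
\]
This is summable because \(\rho_{n+1}\le\frac23\rho_n\), and it is what makes the choice \(\bar\sigma_n:=\sigma_n-\zeta_n^E\) legitimate.

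\textbf{Smaller corrections.}
\begin{itemize}
\item \emph{No Neumann factor outside the cutoff supports.} On the parts of \(\mathcal O_n\) outside the cutoff supports, \(\delta_n=0\), so \(g_n(E;v_n)=g_n(E;v_{n-1})\) and \eqref{porp1d} \emph{is} \textup{(C8)} there, with the same \(p_n\). Degrading \(p_n\) at this stage would not give \textup{(C8)} as stated. The update \(p_{n+1}=p_n/(1-t_n)\) belongs to the later passage from \(g_n\) to \(g_{n+1}\), outside this proposition.
\item \emph{Gevrey constant in \eqref{porp1g}.} You need the Gevrey norm \(|\partial_Eg_n|_{\mathfrak s,K_\lambda}\le\frac{4\pi^2}3+\frac18<15\), not the pointwise bound \(\frac98\), since the constant \(1\) has norm \(4\pi^2/3\). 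This is where \(60B_0^2=4\cdot15\,B_0^2\) comes from.
\item \emph{Connector floor.} The bound \eqref{porp1e} gives \(\log^+\kappa_i^{-1}\le\beta|y_i|^{-\nu}+C\); the factor \(\lambda\) works in your favour. A term \(+C\log\lambda\) could not be bounded by \(CY_{a(i)}\) at early scales, where \(R_{n,S}^\vartheta\) may be much smaller than \(\log\lambda\).
\item \emph{Energy transversality.} The bound \(|\partial_Eg_n-1|_{\mathfrak s,K_\lambda}\le\frac{C_{\tan}}2Z_n\le\frac18\) follows directly from \textup{(H2)}. Nothing needs tightening, and a hypothesis cannot be tightened inside the proof.
\item \emph{Citation on \(\mathcal D_n\).} The inequality you need there is \(S(\vartheta_d\rho_{n+1})-S(a_0\rho_{n+1})\ge c\rho_{n+1}^{-\nu}\). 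It holds because \(\vartheta_d<a_0\), but it is not \eqref{eq19a}. Alternatively, use \(|x|\ge\rho_{n+1}\) with \eqref{eq19b}, as the paper does.
\item \emph{Proof of \eqref{porp1b}.} This needs its one-line proof: \(\|A(q)^{\pm1}\|\le1+|q|\) with \(|q|\le E_{n_0}+2\lambda\le\frac52\lambda\).
\end{itemize}
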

	
	\begin{proof}
		Since \(A(q)=qE_{11}+R_{\pi/2}\) and \(A(q)^{-1}=qE_{22}+R_{-\pi/2}\)
		satisfy \(\|A(q)^{\pm1}\|\leq 1+|q|\), and \(E_{n_0}\leq\lambda/2\),
		\eqref{porp1a} implies
		\eqref{porp1b}.
		
		Before proceeding to the three main steps of the proof, we specify the recursive sequences \(\sigma_n,\bar\sigma_n\) and \(p_n\). Define the energy and angle-increment losses at scale \(n\) by
		\begin{align}
			\zeta_n^E
			&:=\frac{2E_{n+1}}{\lambda}
			\exp\bigl\{\beta(a_3\rho_{n+1})^{-\nu}\bigr\},
			\label{eq22a}\\
			\zeta_n^A
			&:=\frac {C_0}\lambda
			e^{-c_0R_{n,L}\log\lambda+C_0R_{n,L}^{\vartheta}
				+\beta(a_0\rho_{n+1})^{-\nu}}
			+\frac {C_0}\lambda\sum_{T\in\{S,L\}}
			e^{-c_0R_{n,T}\log\lambda+C_0R_{n,T}^{\vartheta}
				+\beta(\rho_n/7)^{-\nu}},
			\label{eq22b}
		\end{align}
		and put \(\zeta_n:=\zeta_n^E+\zeta_n^A\). By the choice of \(\beta\)
		in \eqref{eqEn}, we have
		\(\pi^{-\nu}\vartheta_d^{-\nu}-\beta a_3^{-\nu}>0\).
		Together with \(S(t)=\pi^{-\nu}|t|^{-\nu}(1+O(t^2))\), this gives
		\[
		\begin{aligned}
			\zeta_n^E
			&=2\exp\bigl\{-S(\vartheta_d\rho_{n+1})
			+\beta(a_3\rho_{n+1})^{-\nu}\bigr\}\\
			&\leq C\exp\{-c\rho_{n+1}^{-\nu}\}.
		\end{aligned}
		\]
		Since \(\rho_{n+1}\leq\frac23\rho_n\), the energy losses are summable.
		The angle losses are summable by \eqref{eq13}, so
		\(\sum_{n\geq n_0}\zeta_n\to0\) as \(n_0\to\infty\).
		Taking \(n_0\) large enough that \(\sum_{n\geq n_0}\zeta_n\leq1/4\) and setting
		\begin{equation}\label{eq23}
			\sigma_{n_0}:=\frac12,\qquad
			\bar\sigma_n:=\sigma_n-\zeta_n^E,\qquad
			\sigma_{n+1}:=\sigma_n-\zeta_n,
		\end{equation}
		we obtain
		\begin{equation}\label{eq24}
			\frac14\leq \sigma_{n+1}\leq\bar\sigma_n\leq \sigma_n\leq\frac12
			\qquad(n\geq n_0).
		\end{equation}
		Similarly, to track the Neumann-series amplification of scalar inverses on components of \(\mathcal O_{n+1}\cap\mathcal O_n\), we set
		\begin{equation}\label{eq25}
			t_n:=\frac{2B_0C_0}{\lambda}
			\sum_{T\in\{S,L\}}
			e^{-c_0R_{n,T}\log\lambda
				+(C_0+B_0)Y_{n,T}},
			\qquad
			p_{n_0}:=1,\qquad p_{n+1}:=\frac{p_n}{1-t_n}.
		\end{equation}
		For \(n_0\) sufficiently large, \(\sum_{n\geq n_0}t_n\leq1/8\), so that
		\begin{equation}\label{eq26}
			1\leq p_n\leq\exp\Bigl(2\sum_{j\geq n_0}t_j\Bigr)<2
			\qquad(n\geq n_0).
		\end{equation}
		
		\smallskip
		\noindent\emph{Step 1: The scale-\(n\) correction and signed floor.}
		By \textup{(H2)} and the definition of \(C_{\tan}\),
		\(\partial_Eg_n=1-\sec^2\Phi_n\,\partial_E\Phi_n-\sec^2\Theta_n\,\partial_E\Theta_n\)
		satisfies \(|\partial_Eg_n-1|_{\mathfrak s,K_\lambda}\leq C_{\tan}Z_n\leq 1/8\),
		so \(\frac78\leq\partial_Eg_n\leq\frac98\) pointwise and
		\(|\partial_Eg_n|_{\mathfrak s,K_\lambda}\leq\frac{4\pi^2}{3}+\frac18<15\).
		By \textup{(H3)}, \(|e_n|_{\mathfrak s,K_\lambda;\widetilde I_n}\leq D_n\).
		We have \(\rho_n^{-\nu}+\rho_n^{-\mathfrak b}=o(R_{n-1,L})\) by
		\eqref{eq13}. Thus both smallness conditions in
		\eqref{lem12g} hold with \(D=D_n\), using
		\eqref{eq18} and
		\eqref{eq19a}. Applying
		Lemmas~\ref{lem12} and
		\ref{lem10} with \(H_0=A=\lambda h\)
		and \(E_0=E_n\) gives \textup{(C1)}. Moreover, on \(\Omega_{n,\omega}\),
		\eqref{eq17} gives
		\(\min\{A_n,B_{n,\omega}\}\geq\frac78\lambda e^{-\beta|x|^{-\nu}}\), so
		\[
		g_n(0,x;v_n)\leq-\frac34\mathcal H_{n,\omega}(x)
		\leq-\frac{21}{32}\lambda e^{-\beta|x|^{-\nu}}
		\leq-\sigma_n\lambda e^{-\beta|x|^{-\nu}}.
		\]
		The same bound \(g_n(0,x;v_n)\leq-\sigma_n\lambda e^{-\beta|x|^{-\nu}}\)
		holds on \(\Omega_{n,0}\setminus\mathcal K_n^\circ\) by
		\eqref{lem10e}, on
		\(\mathcal K_n\setminus\widetilde I_{n+1}\) by \(g_n(0;v_n)=-\lambda h\),
		and outside the cutoff supports by \textup{(H3)}. Since every
		\(x\in I_n\setminus\widetilde I_{n+1}\) satisfies \(|x|\geq a_3\rho_{n+1}\),
		integrating \(\partial_Eg_n\leq 2\) over \([0,E]\) for \(0\leq E\leq E_{n+1}\)
		increases \(g_n\) by at most \(2E_{n+1}\leq\zeta_n^E\lambda e^{-\beta|x|^{-\nu}}\),
		which yields \textup{(C2)} since \(\sigma_n-\zeta_n^E=\bar\sigma_n\) and
		\(|\cos\Phi_n\cos\Theta_n|\geq1/2\).
		
		\smallskip
		\noindent\emph{Step 2: Reciprocal bounds for \(g_n(E;v_n)\) and size of \(\delta_n\).}
		On components of \(\mathcal O_n\) outside the cutoff supports, \(\delta_n=0\) and \eqref{porp1d} holds directly,
		while on the central transition zone and side supports
		Lemma~\ref{lem12} and
		\eqref{eq13} bound
		\(|g_n(E;v_n)^{-1}|_{\mathfrak s,K_\lambda}\) by \(\mathcal B_{n,T_n(\mathcal A)}\).
		On \(\mathcal A\subset\mathcal D_n\), we have \(\chi_{n,0}=1\) and
		\(|x|\geq\rho_{n+1}\), so
		\(g_n(E;v_n)=-\lambda h+\int_0^E\partial_Eg_n(u;v_n)\,du\). By
		\eqref{lem9c} and
		\eqref{eq19b},
		\begin{equation}\label{eq27}
			\Bigl|\Bigl(\int_0^E\partial_Eg_n(u;v_n)\,du\Bigr)
			(\lambda h)^{-1}\Bigr|_{\mathfrak s,K_\lambda;\mathcal A}
			\leq C E_{n+1}\lambda^{-1}
			e^{S(\rho_{n+1})+C\rho_{n+1}^{-\varkappa}}
			<\frac18.
		\end{equation}
		Inverting by a Neumann series and applying
		\eqref{eq13} yields
		\(|g_n(E;v_n)^{-1}|_{\mathfrak s,K_\lambda;\mathcal A}\leq\mathcal B_{n,L}\).
		Since \(p_n<2\) and \(|\partial_Eg_n|_{\mathfrak s,K_\lambda}<15\), the
		identity \(\partial_E(g_n^{-1})=-g_n^{-2}\partial_Eg_n\) gives
		\(|\partial_E(g_n^{-1})|_{\mathfrak s,K_\lambda}\leq 60\mathcal B_{n,T}^2\),
		which proves both \eqref{porp1f} and
		\eqref{porp1g} in \textup{(C8)}.
		
		Substituting \(|e_n|_{\mathfrak s,K_\lambda}\leq D_n\) and
		\eqref{lem12f} into
		\eqref{eq20} yields
		\(|\delta_n|_{\mathfrak s,K_\lambda}\leq\mathfrak d_n\). Since
		\(\mathfrak b<\nu\) and \(\rho_n^{-\mathfrak b}\leq CR_{n-1,L}^{\tau\mathfrak b}=o(R_{n-1,L})\)
		by \eqref{eq12} and
		\eqref{lem6b}, \(\sum_{n\geq n_0+1}\mathfrak d_n<\infty\),
		and the triangle inequality preserves \textup{(H0)}, proving \textup{(C6)}.
		
		\smallskip
		\noindent\emph{Step 3: Concatenation at scale \(n+1\).}
		Decompose any regular open \(I_{n+1}\)-gap block \(B\) using
		Lemma~\ref{lem7}. At each intermediate
		visit \(y\in I_n\setminus I_{n+1}\), the connector
		\(U_y=R_{\Phi_n(E,y)}A_{v_n,E}(y)R_{\Theta_n(E,y)}\) satisfies
		\(\|U_y\|\leq 4\lambda\), while \eqref{porp1e} and
		\eqref{eq13} give
		\(\log^+|(U_y)_{11}|^{-1}\leq CY_{n,a(y)}\). By \textup{(C8)} and
		Lemma~\ref{lem13}, the mixed Gevrey
		connector hypothesis \eqref{lem14c} of
		Lemma~\ref{lem14} holds with a constant
		depending only on \(B_0\) and \(\mathfrak s\). Since
		\(16\lambda^2|(U_y)_{11}|^{-2}e^{-2(1-\varepsilon_n)R_{n,T}\log\lambda}\to0\)
		uniformly, Lemma~\ref{lem14} yields
		\textup{(C3)}. For \textup{(C4)}, at any \(x\in\mathcal A\subset I_{n+1}\),
		the right endpoint angle \(\Phi_{n+1}(E,x)\) of \(B_{I_{n+1}}^+(E,x)\)
		is estimated using the first scale-\(n\) sub-gap of length \(\ell_1=r_{I_n}^+(x)-1\), while
		the left endpoint angle \(\Theta_{n+1}(E,x)\) of \(B_{I_{n+1}}^-(E,x)\)
		is estimated using the last scale-\(n\) sub-gap of length \(\ell_{m+1}=r_{I_n}^-(x)-1\). On
		\(\mathcal A\subset I_{n+1}\cap\mathcal A_n^0\) both endpoint sub-gaps
		have length \(R_{n,L}\), whereas on \(\mathcal A\subset I_{n+1}\setminus\overline{\mathcal A_n^0}\)
		both have length at least \(R_{n,S}\); hence
		\eqref{lem14d}--\eqref{lem14e}
		give \textup{(C4)} and \textup{(C5)}. Finally,
		\eqref{eqsupp} and
		Lemma~\ref{lem10} give \textup{(C7)}.
	\end{proof}

	\section{Initialization and convergence}
	\label{sec8}
	
	At the initial scale \(n_0(\lambda)\), the potential before correction
	is \(v_{n_0-1}=h\), whose one-step entries \(E-\lambda h(x)\) have
	uniformly large magnitude outside \(I_{n_0}\). A direct projective
	contraction argument establishes the initial return-block estimates,
	after which we complete the multi-scale induction.
	
	\subsection{Initial return blocks via projective contraction}
	
	\begin{lemma}
		\label{lem15}
		Let \(\mathscr A\) be a finite family of intervals with fixed return
		itineraries. On each \(\mathcal A\in\mathscr A\), consider a product of
		length \(\ell\geq1\):
		\[
		P(E,x)=A(a_\ell(E,x))\cdots A(a_1(E,x)),\qquad
		A(a)=\begin{pmatrix}a&-1\\1&0\end{pmatrix}.
		\]
		Suppose that, on every \(\mathcal A\in\mathscr A\), the entries satisfy
		\(|a_j(E,x)|\geq\mathcal D\) and \(\partial_Ea_j\equiv1\) for
		\(0\leq E\leq E_*\). Assume that the reciprocals \(a_j^{-1}\)
		have finite mixed Gevrey norm on their respective intervals for some
		common Gevrey parameter, uniformly for \(0\leq E\leq E_*\).
		Then there exists an absolute constant \(C_1\geq1\) such that, whenever
		\(\mathcal D>16C_1\), one can choose a finite Gevrey parameter
		\(K_1\geq K_*\) for which
		\begin{align}
			\log\|P(E,x)\|
			&\geq\ell\log(\mathcal D-2),
			\label{lem15a}\\
			|\|P\|^{-1}|_{\mathfrak s,K_1;1}
			&\leq C_1(C_1/\mathcal D)^\ell,
			\label{lem15b}\\
			|\Phi(P)|_{\mathfrak s,K_1;1}
			+|\Theta(P)|_{\mathfrak s,K_1;1}
			&\leq C_1\mathcal D^{-1},\\
			|\partial_E\Phi(P)|_{\mathfrak s,K_1}
			+|\partial_E\Theta(P)|_{\mathfrak s,K_1}
			&\leq C_1\mathcal D^{-2}.
			\label{lem15c}
		\end{align}
	\end{lemma}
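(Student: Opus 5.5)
Since every diagonal entry \(a_j\) is large, I will treat \(P\) in the same way as in Lemma~\ref{lem14}, with each one-step matrix \(A(a_j)\) playing the role of both a gap block and a connector. Write \(A(a)=a\,\bigl(\begin{smallmatrix}1&-a^{-1}\\a^{-1}&0\end{smallmatrix}\bigr)\). Then \(P\) is a product of matrices of the form \(a_j(\Id+N_j)\), where \(N_j\) has entries \(0,\pm a_j^{-1}\). For the pointwise bound \eqref{lem15a} I will use the projective Riccati recursion from the proof of Lemma~\ref{lem14}(1). Apply \(P\) to \(e_1\) and write the \(j\)-th iterate as \(\xi_j(1,s_j)^{\mathsf T}\), so that \(s_{j+1}=1/(a_{j+1}-s_j)\). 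By induction \(|s_j|\leq1/(\mathcal D-1)\leq1\), and therefore \(|\xi_{j+1}|\geq(|a_{j+1}|-|s_j|)|\xi_j|\geq(\mathcal D-2)|\xi_j|\). Since \(\|P\|\geq|\xi_\ell|\), this gives \(\log\|P\|\geq\ell\log(\mathcal D-2)\).

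For the Gevrey statements I will first choose \(K_1\geq K_*\) large enough that \(|a_j^{-1}|_{\mathfrak s,K_1;1}\leq C\mathcal D^{-1}\) uniformly in \(j\), \(E\) and \(\mathcal A\). Finiteness of the mixed norms at some common parameter suffices for this: increasing \(K\) damps every positive-order derivative, so the norm tends to the \(C^0\)-type quantity \(\frac{4\pi^2}{3}(\|a_j^{-1}\|_{C^0}+\|\partial_E a_j^{-1}\|_{C^0})\). Here \(\|a_j^{-1}\|_{C^0}\leq\mathcal D^{-1}\) and \(\partial_Ea_j^{-1}=-a_j^{-2}\). Because there are only finitely many intervals and factors, and \(\partial_Ea_j\equiv1\) keeps the \(E\)-dependence uniform on the compact energy interval, one \(K_1\) works for all of them. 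With this \(K_1\) fixed, I will concatenate one step at a time exactly as in Lemma~\ref{lem14}(2). Let \(P_k=\epsilon_kR_{\alpha_k}D_{M_k}R_{\beta_k}\). The next product is \(A(a_{k+1})R_{\alpha_k}D_{M_k}R_{\beta_k}\), and the connector \(A(a_{k+1})R_{\alpha_k}\) has normalized entries in \(\mathcal G^{(1)}_{\mathfrak s,K_1}\) of size \(O(\mathcal D^{-1})\) once \(|\tan\alpha_k|\leq C\mathcal D^{-1}\). Lemma~\ref{lem5} should then be applied with \(\nu=|a_{k+1}|\) (formally \(D_\nu\) with \(\nu\) large, after factoring) and \(\mu=M_k\). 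Its domination condition \eqref{lem5b} holds as soon as \(\mathcal D>16C_1\). The lemma yields \(|M_{k+1}^{-1}|\leq C_1\mathcal D^{-1}|M_k^{-1}|\), which iterates to \eqref{lem15b}. It also yields \(|\tan\alpha_{k+1}|\leq C\mathcal D^{-2}\cdot(\text{normalized entries})\), which is at most \(C_1\mathcal D^{-1}\). The increments of the right angle \(\beta\) are bounded by \(C|M_k^{-1}|^2\), and these sum geometrically to \(O(\mathcal D^{-1})\). This gives the angle bound.

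The main obstacle is the sharper energy-derivative bound \eqref{lem15c} of order \(\mathcal D^{-2}\), which the plain mixed norm (order \(\mathcal D^{-1}\)) does not give. To get it, I will track \(\partial_E\) separately. Because \(\partial_Ea_j\equiv1\), every \(\epsilon\)-component entering the normalized connector is \(\partial_E(a_j^{-1})=-a_j^{-2}\), of size \(\mathcal D^{-2}\). The explicit slope formula \(z=2B/(A-C+\Delta)\) in the proof of Lemma~\ref{lem5} is analytic in the normalized entries, and its linear part is \(\pm a^{-1}\). Consequently its \(E\)-derivative is a linear combination of \(\partial_E(a^{-1})=O(\mathcal D^{-2})\) plus quadratic terms of order \(\mathcal D^{-1}\cdot\mathcal D^{-2}\). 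I therefore expect to carry a two-tier induction: \(|\alpha_k|\leq C\mathcal D^{-1}\) and \(|\partial_E\alpha_k|_{\mathfrak s,K_1}\leq C\mathcal D^{-2}\). The derivative of the product \(\sqrt{1+z^2}\) and of the Neumann series terms contributes only cross terms of order \(\mathcal D^{-3}\), so the induction closes with absolute constants. Summing the geometric series for the \(\beta\)-increments in the same way gives \(|\partial_E\Phi(P)|\leq C_1\mathcal D^{-2}\).
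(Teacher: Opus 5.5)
Your proposal is correct in substance but takes a different route from the paper. Your pointwise bound \eqref{lem15a} and your choice of \(K_1\) agree with the paper's.

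\textbf{How the two routes differ.} The paper forms no intermediate SVDs. It runs the scalar Riccati map \(s\mapsto(a_j-s)^{-1}\) as a contraction on the ball of radius \(2c_1/\mathcal D\) in \(\mathcal G^{(1)}_{\mathfrak s,K_1}\). This is done forward for \(s=p_{21}/p_{11}\), and on the transposed reversed product \(JA(a_1)\cdots A(a_\ell)J\) for \(r=p_{12}/p_{11}\). The \(\mathcal D^{-2}\) energy bound then comes in one line from the differentiated recursion \(\partial_Es_j=-s_j^2(1-\partial_Es_{j-1})\). Finally the paper writes \(P=p_{11}\bigl(\begin{smallmatrix}1&r\\s&t\end{smallmatrix}\bigr)\) with \(p_{11}=\prod_j(a_j-s_{j-1})\) and applies analytic functional calculus once to the SVD of the normalized matrix.

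You instead iterate the two-block concatenation of Lemma~\ref{lem5}, as in Lemma~\ref{lem14}. You carry \((M_k,\alpha_k,\beta_k)\) step by step and run a second induction for the \(\epsilon\)-components. This reuses existing machinery and gives the singular directions at every stage, but it costs per-step connector bookkeeping. In the paper's version the extra factor \(\mathcal D^{-1}\) in \eqref{lem15c} is structural, since the energy enters only through \(s_j^2\). Your two-tier induction is most cleanly run in the weighted dual-number norm \(|f|_{\mathfrak s,K_1}+\mathcal D|g|_{\mathfrak s,K_1}\) on \(f+\epsilon g\), which is still submultiplicative. In that norm \(|a_j^{-1}|\le c_1\mathcal D^{-1}\), and the Lemma~\ref{lem5} estimates directly give \(|\tan\alpha_k|+\mathcal D|\partial_E\tan\alpha_k|\le C_1\mathcal D^{-1}\).

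\textbf{Two steps need fixing.}
\begin{enumerate}
\item Lemma~\ref{lem5} cannot be applied literally with \(\nu=|a_{k+1}|\). The connector would then be \(D_{|a|}^{-1}A(a)R_{\alpha_k}\), whose normalized \((2,1)\)-entry is about \(\operatorname{sgn}(a)\,|a|\). So \(\mathcal H\) has size \(\sup|a|\), which \(|a_j|\ge\mathcal D\) does not control, and \eqref{lem5b}, \eqref{lem5d} do not yield your claims. Your remark that \(A(a_{k+1})R_{\alpha_k}\) has normalized entries \(O(\mathcal D^{-1})\) refers to a different normalization.

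Instead, take \(X=V_kD_{M_k}\) with \(V_k=A(a_{k+1})R_{\alpha_k}\) and no left diagonal factor. The proof of Lemma~\ref{lem5} uses only the smallness of the entries \(x,y,z\) of \(W\). Here \(y=(V_k)_{21}/(V_k)_{11}=a^{-1}(1-a^{-1}\tan\alpha_k)^{-1}\) is already \(O(\mathcal D^{-1})\). The bound \(|(V_k)_{11}^{-1}|=O(\mathcal D^{-1})\) then supplies the singular-value step. Only \(a^{-1}\) and \(\tan\alpha_k\) enter, which matches the hypotheses.
\item Unlike \(D_{\Lambda_1}\) in Lemma~\ref{lem14}, the base case \(P_1=A(a_1)\) has nonzero SVD angles. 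You can handle it explicitly: the top-eigenvector slope of \(A(a)A(a)^{\mathsf T}\) is \(2a^{-1}/(1+\sqrt{1+4a^{-2}})\), which is analytic in \(a^{-1}\).
\end{enumerate}
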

	
	\begin{proof}
		Set \(c_1:=16\pi^2/3=4|1|_{\mathfrak s,K}\). The family
		\(\mathscr A\) has finitely many return words, each of finite length.
		The positive-order derivative terms in \eqref{def1a}
		contain factors \(K^{-r}\to0\) as \(K\to\infty\), so we may choose
		\(K_1\geq K_*\) large enough that
		\begin{equation*}
			\max_{\mathcal A,j}|a_j^{-1}|_{\mathfrak s,K_1}
			\leq\frac{c_1}{2\mathcal D},\qquad
			\max_{\mathcal A,j}|\partial_E(a_j^{-1})|_{\mathfrak s,K_1}
			\leq\frac{c_1}{2\mathcal D^2}.
		\end{equation*}
		Under the projective action
		\(A(a)\binom{u}{v}=(a-s)u\binom{1}{\mathcal T_a(s)}\) with \(s=v/u\) and
		\(\mathcal T_a(s)=(a-s)^{-1}=a^{-1}(1-sa^{-1})^{-1}\), the ball
		\(|s|_{\mathfrak s,K_1;1}\leq 2c_1/\mathcal D\) in
		\(\mathcal G_{\mathfrak s,K_1}^{(1)}\) is mapped into itself and
		\(\mathcal T_a\) contracts with factor \(4c_1^2\mathcal D^{-2}<1/4\).
		Iterating from \(s_0=0\) bounds the forward slope \(s=p_{21}/p_{11}\),
		while applying the same recursion to the transposed reversed product
		\(P^{\mathsf T}=JA(a_1)\cdots A(a_\ell)J\) with \(J=\diag(1,-1)\) bounds
		\(r=p_{12}/p_{11}\). Differentiating the slope recursion gives
		\(\partial_Es_j=-s_j^2(1-\partial_Es_{j-1})\); estimating this identity in
		\(G^{\mathfrak s,K_1}\) yields
		\(\sup_j|\partial_Es_j|_{\mathfrak s,K_1}\leq C\mathcal D^{-2}\).
		
		Factoring \(P=p_{11}\left(\begin{smallmatrix}1&r\\s&t\end{smallmatrix}\right)\)
		with \(p_{11}=\prod_{j=1}^\ell(a_j-s_{j-1})\) and \(t=rs+p_{11}^{-2}\)
		(since \(\det P=1\)), we obtain
		\(|p_{11}^{-1}|_{\mathfrak s,K_1;1}\leq(C_1/\mathcal D)^\ell\) and
		\(|r|_{\mathfrak s,K_1;1}+|s|_{\mathfrak s,K_1;1}+|t|_{\mathfrak s,K_1;1}\leq C\mathcal D^{-1}\),
		with \(|\partial_Er|+|\partial_Es|+|\partial_Et|\leq C\mathcal D^{-2}\).
		The pointwise bound \(|a_j-s_{j-1}|\geq\mathcal D-2\) gives
		\eqref{lem15a}, and applying analytic functional
		calculus (Lemma~\ref{lem4}) to the SVD of
		\(\left(\begin{smallmatrix}1&r\\s&t\end{smallmatrix}\right)\) yields
		\eqref{lem15b}--\eqref{lem15c}.
	\end{proof}
	
	\subsection{The multi-scale convergence theorem}
	
	\begin{theorem}
		\label{thm2}
		Let \(\alpha\) satisfy \eqref{eqDC} and let
		\(\mathfrak s>1+\tau\). Fix
		\begin{equation*}
			0<\xi<\frac1{\tau\nu},\qquad
			X_\lambda=(\log\lambda)^\xi,\qquad
			n_0(\lambda)=\min\{n\geq n_1:Q_n^+\geq X_\lambda\}.
		\end{equation*}
		Starting from \(v_{n_0-1}:=h\), define the initial correction
		\(\delta_{n_0}\) by
		\eqref{lem12b} and
		\eqref{lem12c}, and set
		\(v_{n_0}=h+\delta_{n_0}\). For \(n\geq n_0+1\), use the same
		formulas to define \(v_n=v_{n-1}+\delta_n\). Then, for all sufficiently
		large \(\lambda\), there exists \(K_\lambda\geq K_*\) such that
		\[
		v_\infty=h+\delta_{n_0}+\sum_{n\geq n_0+1}\delta_n
		\in G^{\mathfrak s,K_\lambda}(\T),
		\qquad \|v_\infty\|_{C^0(\T)}\leq2.
		\]
		The series converges in the norm of this Gevrey space.
		The parameter \(K_\lambda\) is chosen once, large enough to include the
		initial correction, and remains fixed thereafter. The hypotheses
		\textup{(H0)--(H4)} and conclusions \textup{(C1)--(C8)} of
		Proposition~\ref{porp1} hold at
		every scale \(n\geq n_0+1\). At \(n=n_0\), the initial gap
		blocks satisfy \textup{(H1),(H2)}, and the corrected potential satisfies
		\textup{(C1)--(C5),(C7),(C8)}. In particular,
		\(\mu_{n_0}=2/3\) and \(1/2\leq\mu_n\leq2/3\) for all \(n\geq n_0\).
		
		The asymptotic growth loss satisfies
		\begin{equation}\label{thm2a}
			\varepsilon_\infty(\lambda)
			:=\sup_{n\geq n_0(\lambda)}\varepsilon_n
			\leq\varepsilon_{n_0(\lambda)}
			+C\sum_{n\geq n_0(\lambda)}\Delta_n^*
			\longrightarrow0
			\qquad(\lambda\to\infty).
		\end{equation}
		For every \(y\in P_n\cap\Omega_{\rm ret}\) and \(n\geq n_0\), both
		adjacent \(I_n\)-return gaps have length \(\ell_n=Q_n^+-1\), and their
		singular values
		\(\Lambda_n^\pm(y)=\|B_{I_n}^\pm(E_n,y;v_\infty)\|\) satisfy
		\begin{equation}\label{thm2b}
			r_{I_n}^+(y)=r_{I_n}^-(y)=Q_n^+,\qquad
			\left|\frac{\log\Lambda_n^\pm(y)}{\ell_n\log\lambda}-1\right|
			\leq\tilde\varepsilon_\lambda:=
			\varepsilon_\infty(\lambda)+\frac{\log4}{\log\lambda}
			\longrightarrow0.
		\end{equation}
	\end{theorem}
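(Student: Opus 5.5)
The proof splits into an initialization at scale \(n_0=n_0(\lambda)\), an induction via Proposition~\ref{porp1}, and a limiting argument.

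\emph{Initialization.} Take \(v_{n_0-1}=h\). For \(x\notin I_{n_0}\) we have \(|x|\geq\rho_{n_0}\), so \(|E-\lambda h(x)|\geq\lambda h(\rho_{n_0})-E_{n_0}\geq\frac12\lambda e^{-S(\rho_{n_0})}\) by \eqref{eq19b}. Since \(Q_{n_0}^+\approx X_\lambda=(\log\lambda)^\xi\) and \(\rho_{n_0}^{-\nu}\leq C(Q_{n_0}^{+})^{\tau\nu}\leq C(\log\lambda)^{\xi\tau\nu}\) with \(\xi\tau\nu<1\), we get \(\mathcal D:=\frac12\lambda e^{-S(\rho_{n_0})}=\lambda^{1-o(1)}\).

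On the finitely many itinerary intervals of scale \(n_0\), Lemma~\ref{lem15} applies with \(a_j=E-\lambda h(T^jx)\) and \(\partial_Ea_j\equiv1\). The reciprocals \(a_j^{-1}\) have finite Gevrey norm by \eqref{lem9c}. This gives (H1) with \(\varepsilon_{n_0}=\log(\mathcal D-2)^{-1}\)-type loss \(1-\log(\mathcal D-2)/\log\lambda\to0\), and \(\mu_{n_0}=2/3\) once \(C_1/\mathcal D\leq\lambda^{-2/3}\). It also gives (H2), since the angles are \(O(\mathcal D^{-1})\) and \(Z_{n_0}\leq C_1\mathcal D^{-2}\).

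(H3) at \(n_0\) follows because \(g_{n_0}(0;h)+\lambda h=-\tan\Phi-\tan\Theta=O(\mathcal D^{-1})\) in Gevrey norm. This is the role of \(D_{n_0}\); \(D_{n_0}\) must be redefined for the initial step with \(R_{n_0-1,L}\) replaced by this bound. (H4) holds on \(\mathcal O_{n_0}\) outside the cutoffs because \(|g|\gtrsim\lambda e^{-S(x)}\) there. Then Lemma~\ref{lem12}, applied with \(D=C\mathcal D^{-1}\) (condition \eqref{lem12g} holds since \(e^{S(a_0\rho_{n_0})}\ll\mathcal D\) by \eqref{eq19a}-type margins), defines \(\delta_{n_0}\). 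Steps 1–3 of the proof of Proposition~\ref{porp1} then run verbatim to give (C1)–(C5), (C7), (C8) at \(n_0\).

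We then choose \(K_\lambda\geq\max(K_1,K_*)\) once, so that \(|h+\delta_{n_0}|_{\mathfrak s,K_\lambda}\) is finite. Set \(M_\lambda\) equal to that value plus \(\sum\mathfrak d_r\). Note that the \(G^{\mathfrak s,K}\) norms are nonincreasing in \(K\), so all later bounds persist. The \(C^0\) bound \(\leq2\) holds since \(\|h\|_{C^0}<1/2\) and \(\|\delta_{n_0}\|_{C^0}\lesssim\lambda^{-1}\mathcal D^{-1}+\lambda^{-1}\cdot\lambda e^{-c\rho^{-\nu}}\).

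\emph{Induction.} Assume (H0)–(H4) at scale \(n\) and apply Proposition~\ref{porp1}. We check that its conclusions reproduce the hypotheses at \(n+1\):
\begin{itemize}
\item (C6) gives (H0).
\item (C3) gives (H1), with \(\mu_{n+1}=\mu_n-C'\Delta_n^*\geq2/3-C'\sum_{n\ge n_0}\Delta^*\geq1/2\) for \(n_0\) large, i.e.\ \(\lambda\) large.
\item (C4)+(C5) with summable increments keep \(|\Phi|+|\Theta|\leq1/2\) and \(C_{\tan}Z_{n+1}\leq1/8\), which gives (H2).
\item For (H3) at \(n+1\): on \(\widetilde I_{n+1}\subset\mathcal K_n\) we have \(g_{n+1}(0;v_n)=g_n(0;v_n)-(\text{tangent differences})=-\lambda h+O(D_{n+1})\) by (C1) and (C4). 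On \(I_{n+1}\setminus\mathcal K_{n+1}^\circ\) the floor follows from (C2) at \(E=0\) minus the tangent-difference loss, which is exactly \(\zeta_n^A\lambda e^{-\beta|x|^{-\nu}}\). This gives the floor \(\sigma_{n+1}\).
\item For (H4) at \(n+1\): on \(\mathcal O_{n+1}\cap\mathcal O_n\), write \(g_{n+1}^{-1}=g_n^{-1}(1-g_n^{-1}\Delta_{\tan})^{-1}\) and use a Neumann series with ratio \(t_n\). This explains \(p_{n+1}=p_n/(1-t_n)\). On \(\mathcal O_{n+1}\cap\mathcal K_n\), use \(-\lambda h\) with \eqref{lem9c} for \(|x|\geq a_0\rho_{n+1}\), giving \(\mathcal B_{n+1,L}\).
\end{itemize}
The bound \(\|v_\infty\|_{C^0}\leq2\) and convergence in \(G^{\mathfrak s,K_\lambda}\) follow from \(\sum\mathfrak d_n<\infty\). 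Summing (C3) gives \eqref{thm2a}, since \(\sum_{n\geq n_0}\Delta_n^*\to0\) and \(\varepsilon_{n_0}\to0\).

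\emph{The estimate \eqref{thm2b}.} For \(y\in P_n\subset\mathcal A_n^0\), Lemma~\ref{lem7} gives \(r^\pm=Q_n^+\). The lower bound \(\log\Lambda\geq(1-\varepsilon_n)\ell_n\log\lambda\) comes from (H1) at energy \(E_n\), combined with (C7), which says later corrections are supported in \(\widetilde I_m\) and so do not alter the open \(I_n\)-gaps except through visits to \(I_{n+1}\); those visits are handled by (C3) at every later scale, and the losses are uniform. The upper bound is trivial: \(\Lambda\leq(4\lambda)^{\ell_n}\) by \eqref{porp1b}, giving the \(\log4/\log\lambda\) term.

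\textbf{Main obstacle.} The hard part is the initialization. One must check that Lemma~\ref{lem12}'s smallness conditions and \((\mathrm{H3})\)–\((\mathrm{H4})\) hold at scale \(n_0\), where \(D_{n_0}\) comes from \(\mathcal D^{-1}\approx\lambda^{-1+o(1)}\) and not from a prior scale. This requires \(e^{S(a_0\rho_{n_0})+C\rho_{n_0}^{-\varkappa}}\ll\lambda^{1-o(1)}\). That holds precisely because \(\xi\tau\nu<1\), which makes \(\rho_{n_0}^{-\nu}=o(\log\lambda)\).
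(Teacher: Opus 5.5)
Your proposal is correct and follows the paper's proof step for step. The shared ingredients are: initialization via Lemma~\ref{lem15} with $\mathcal D_{n_0}=\lambda^{1-o(1)}$ (from $\xi\tau\nu<1$) and the separately defined defect $D_{n_0}=C\mathcal D_{n_0}^{-1}$; a single choice of $K_\lambda$ that absorbs $\delta_{n_0}$; propagation of (H0)--(H4) through (C1)--(C8) with $\sigma_{n+1}=\bar\sigma_n-\zeta_n^A$ and the Neumann multiplier $p_{n+1}=p_n/(1-t_n)$; and \eqref{thm2b} from (H1) at energy $E_n$ together with the one-step bound $4\lambda$.

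Three points need minor tidying:
\begin{enumerate}
\item You wrote $Q_{n_0}^+\approx X_\lambda$, but one only has $X_\lambda\le Q_{n_0}^+\le CX_\lambda^{\tau}$, and your chain through $(Q_{n_0}^+)^{\tau\nu}$ reaches the right exponent only because two errors cancel. The correct route is $\rho_{n_0}^{-1}<4Q_{n_0}^+\le CX_\lambda^{\tau}$.
\item (H4) off the cutoffs does not follow from the pointwise bound $|g|\gtrsim\lambda e^{-S(x)}$. It comes from the Gevrey-norm Neumann expansion about $-\lambda h$ using \eqref{lem9c}, the same argument you use on $\mathcal O_{n+1}\cap\mathcal K_n$.
\item The open $I_n$-gaps never enter $I_n$, so by Lemma~\ref{lem3} they are unchanged by every $\delta_m$ with $m\ge n$. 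No appeal to (C3) at later scales is needed for the lower bound in \eqref{thm2b}.
\end{enumerate}
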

	
	\begin{proof}
		\emph{Initial gap estimates.}
		The definition of \(n_0=n_0(\lambda)\) and
		Lemma~\ref{lem6} give, for large \(\lambda\),
		\(X_\lambda\leq Q_{n_0}^+\leq C(\gamma,\tau)X_\lambda^\tau\), and hence
		\begin{equation}\label{eq28}
			\rho_{n_0}^{-\nu}\leq C(\gamma,\tau)(Q_{n_0}^+)^\nu
			\leq C(\gamma,\tau)(\log\lambda)^{\xi\tau\nu}=o(\log\lambda),
			\qquad Q_{n_0}^-\geq c(\gamma,\tau)X_\lambda^{1/\tau}\longrightarrow\infty.
		\end{equation}
		For \(v=h\) and \(0\leq E\leq E_{n_0}\), every one-step matrix in an
		open \(I_{n_0}\)-gap is evaluated at \(|x|\geq\rho_{n_0}\), so
		\[
		|E-\lambda h(x)|\geq\mathcal D_{n_0}
		:=\lambda\bigl(h(\rho_{n_0})-h(\vartheta_d\rho_{n_0})\bigr)
		=\lambda^{1-o(1)}\geq\lambda^{3/4}.
		\]
		Applying Lemma~\ref{lem15} to
		the finite family of initial return words yields a common Gevrey
		parameter for which \textup{(H1),(H2)} hold with
		\begin{gather*}
			\mu_{n_0}=\frac23,\qquad
			\varepsilon_{n_0}\leq C\frac{1+\rho_{n_0}^{-\nu}}{\log\lambda}=o(1),
			\\
			|\Phi_{n_0}|_{\mathfrak s,K_\lambda;1}
			+|\Theta_{n_0}|_{\mathfrak s,K_\lambda;1}
			\leq C\mathcal D_{n_0}^{-1}\leq\frac14,
			\qquad Z_{n_0}\leq C\mathcal D_{n_0}^{-2}.
		\end{gather*}
		The defect before the initial correction is estimated separately:
		\begin{gather*}
			e_{n_0}:=g_{n_0}(0;h)+\lambda h
			=-\tan\Phi_{n_0}(0;h)-\tan\Theta_{n_0}(0;h),\\
			|e_{n_0}|_{\mathfrak s,K_\lambda;\widetilde I_{n_0}}
			\leq D_{n_0}:=C\mathcal D_{n_0}^{-1}.
		\end{gather*}
		The return-length bound \eqref{porp1c} will first
		be established at \(n_0+1\). On
		\(I_{n_0}\setminus\mathcal K_{n_0}^\circ\), we have
		\(|x|\geq a_0\rho_{n_0}\). Thus
		\eqref{eq28} and
		\eqref{eq17} imply
		\[
		D_{n_0}\leq\frac12\lambda e^{-\beta|x|^{-\nu}},\qquad
		g_{n_0}(0,x;h)\leq-\frac12\lambda e^{-\beta|x|^{-\nu}}.
		\]
		This proves the signed part of \textup{(H3)} with \(\sigma_{n_0}=1/2\).
		
		\emph{The initial correction.}
		The initial angle estimates give
		\(|\partial_Eg_{n_0}-1|_{\mathfrak s,K_\lambda}\leq1/8\) for large
		\(\lambda\). In the two smallness conditions
		\eqref{lem12g}, the terms involving
		\(D_{n_0}\) satisfy
		\begin{align*}
			&\lambda^{-1}D_{n_0}
			e^{S(a_0\rho_{n_0})+C\rho_{n_0}^{-\mathfrak b}
				+C\rho_{n_0}^{-\varkappa}}
			\\
			&\hspace{15mm}
			+E_{n_0}^{-1}D_{n_0}e^{C\rho_{n_0}^{-\mathfrak b}}
			\leq e^{-2\log\lambda+o(\log\lambda)}.
		\end{align*}
		The terms involving \(E_{n_0+1}\) tend to zero by
		\eqref{eq19a},
		\eqref{eq18}, and
		\(\mathfrak b,\varkappa<\nu\). Therefore
		Lemmas~\ref{lem12} and
		\ref{lem10} apply with \(D=D_{n_0}\).
		The core and plateau identities in \textup{(C1)} follow, while the signed
		transition estimates and the energy loss \(\zeta_{n_0}^E\) yield
		\textup{(C2)} as in Step~1 of
		Proposition~\ref{porp1}.
		
		The initial correction belongs to \(G^{\mathfrak s}\). Its disjoint
		supports and \(0<B_{n_0,\omega}\leq A_{n_0}\) imply
		\[
		\|\delta_{n_0}\|_{C^0}
		\leq\|h\|_{C^0}+C\lambda^{-1}\mathcal D_{n_0}^{-1}.
		\]
		We now enlarge \(K_\lambda\) once to include this correction; all
		preceding norm bounds remain valid. Since \(\sum_{n\geq n_0+1}\mathfrak d_n\to0\)
		as \(\lambda\to\infty\) and \(2\|h\|_{C^0}<1\), we obtain
		\[
		\|h+\delta_{n_0}\|_{C^0}
		+\sum_{n\geq n_0+1}\mathfrak d_n\leq2.
		\]
		Choosing
		\(M_\lambda>
		|h+\delta_{n_0}|_{\mathfrak s,K_\lambda}
		+\sum_{n\geq n_0+1}\mathfrak d_n\)
		establishes \textup{(H0)} at \(n_0+1\) and also yields the one-step bound
		\(4\lambda\) for the corrected initial potential.
		
		On any component of \(\mathcal O_{n_0}\) outside the cutoff supports,
		\[
		g_{n_0}(E;v_{n_0})=-\lambda h(1-\mathfrak r_{n_0}),\qquad
		\mathfrak r_{n_0}
		=\frac{E-\tan\Phi_{n_0}(E;h)-\tan\Theta_{n_0}(E;h)}{\lambda h}.
		\]
		For \(0\leq E\leq E_{n_0+1}\) and \(|x|\geq a_0\rho_{n_0}\),
		\eqref{lem9c} gives
		\[
		|\mathfrak r_{n_0}|_{\mathfrak s,K_\lambda}
		\leq C\lambda^{-1}(E_{n_0+1}+\mathcal D_{n_0}^{-1})
		e^{S(a_0\rho_{n_0})+C\rho_{n_0}^{-\varkappa}}<\frac18.
		\]
		The energy term is controlled by
		\eqref{eq19a}, and the remaining term is
		\(e^{-2\log\lambda+o(\log\lambda)}\). On \(\mathcal D_{n_0}\), the
		exact central correction and \(|x|\geq\rho_{n_0+1}\) give the Neumann
		bound \eqref{eq27}. These estimates, together
		with the corrected transition and side inverses, give \textup{(C8)}
		at \(n_0\) with \(p_{n_0}=1\). Its energy-derivative estimate follows
		from \(\partial_Eg^{-1}=-g^{-2}\partial_Eg\), as in Step~2 of the
		proposition.
		
		The initial estimates \textup{(H1),(H2)}, together with
		\textup{(C2),(C8)}, verify the connector and gap hypotheses of
		Lemma~\ref{lem14}. Repeating Step~3 of
		Proposition~\ref{porp1} therefore
		gives \textup{(C3)--(C5)} at \(n_0\), while support nesting gives
		\textup{(C7)}. Since
		\(\widetilde I_{n_0+1}\Subset\mathcal K_{n_0}\), the core identity
		and the endpoint estimate \textup{(C4)} for \(T=L\) yield the defect bound at scale \(n_0+1\):
		\[
		|g_{n_0+1}(0;v_{n_0})+\lambda h|_{
			\mathfrak s,K_\lambda;\widetilde I_{n_0+1}}
		\leq C_0e^{-c_0R_{n_0,L}\log\lambda+C_0R_{n_0,L}^{\vartheta}}
		=D_{n_0+1}.
		\]
		
		\emph{Propagation to the next scale.}
		We now verify that the inductive hypotheses pass from scale \(n\) to \(n+1\) for all \(n\geq n_0\). Choose \(\lambda\) sufficiently large that
		\[
		C'\sum_{r\geq n_0}\Delta_r^*\leq\frac16,\qquad
		C_0\sum_{r\geq n_0}\sum_{T\in\{S,L\}}
		e^{-c_0R_{r,T}\log\lambda+C_0R_{r,T}^{\vartheta}}\leq\frac14,
		\]
		that \(\varepsilon_{n_0}+C\sum_{r\geq n_0}\Delta_r^*\leq1/4\),
		and that
		\[
		C_{\tan}\left(Z_{n_0}+C_0\sum_{r\geq n_0}\sum_{T\in\{S,L\}}
		e^{-c_0R_{r,T}\log\lambda+C_0R_{r,T}^{\vartheta}}\right)\leq\frac18.
		\]
		The updates in \textup{(C3)--(C5)} then preserve
		\(\mu_n\in[1/2,2/3]\), \(\varepsilon_n\leq1/4\), and
		\textup{(H2)}, and yield \eqref{thm2a}.
		
		On \(\widetilde I_{n+1}\Subset\mathcal K_n\), both adjacent gaps
		from the preceding scale are long. The core identity and \textup{(C4)} give
		\eqref{porp1c} at \(n+1\). To prove the signed part,
		decompose
		\[
		I_{n+1}\setminus\mathcal K_{n+1}^\circ
		=(I_{n+1}\setminus\widetilde I_{n+1})\mathbin{\dot\cup}
		(\widetilde I_{n+1}\setminus\mathcal K_{n+1}^\circ).
		\]
		On the first set, \textup{(C2)} gives the floor with coefficient
		\(\bar\sigma_n\). The angle change is at most
		\(\zeta_n^A\lambda e^{-\beta|x|^{-\nu}}\): on the old central atom we use
		\(T=L\) and \(|x|\geq a_3\rho_{n+1}\), while on an old side atom we use
		\(T=S\) and \(|x|\geq\rho_n/7\). On the second set,
		\(g_n(0;v_n)=-\lambda h\), \(|x|\geq a_0\rho_{n+1}\), and \textup{(C4)} for \(T=L\) gives the same angle bound. Consequently the
		next signed floor holds with coefficient
		\(\sigma_{n+1}=\bar\sigma_n-\zeta_n^A\), proving \textup{(H3)}.
		
		For \textup{(H4)}, split
		\[
		\mathcal O_{n+1}
		=(\mathcal O_{n+1}\cap\mathcal O_n)\mathbin{\dot\cup}
		(\mathcal O_{n+1}\cap\mathcal K_n).
		\]
		On the first set, \textup{(C8)} gives
		\(|g_n(E;v_n)^{-1}|_{\mathfrak s,K_\lambda}\leq p_n\mathcal B_{n,T}\)
		with the scale-\(n\) gap index \(T\). On the second set,
		\(|x|\geq a_0\rho_{n+1}\), \(T=L\), and
		\[
		g_n(E;v_n)=-\lambda h+\int_0^E\partial_Eg_n(u;v_n)\,du.
		\]
		For \(0\leq E\leq E_{n+2}\),
		\eqref{lem9c} and
		\eqref{eq19a} at scale \(n+1\) give
		\[
		\left|\left(\int_0^E\partial_Eg_n(u;v_n)\,du\right)
		(\lambda h)^{-1}\right|_{\mathfrak s,K_\lambda}
		\leq CE_{n+2}\lambda^{-1}
		e^{S(a_0\rho_{n+1})+C\rho_{n+1}^{-\varkappa}}<\frac18.
		\]
		Neumann inversion and \eqref{eq13} therefore
		give \(|g_n(E;v_n)^{-1}|_{\mathfrak s,K_\lambda}\leq\mathcal B_{n,L}\)
		on this set, since \(B_0\) was chosen to cover the direct central
		reference bounds.
		
		On either set, define
		\(\Delta_n=(\tan\Phi_{n+1}-\tan\Phi_n)
		+(\tan\Theta_{n+1}-\tan\Theta_n)\).
		Then \(g_{n+1}(E;v_n)=g_n(E;v_n)-\Delta_n\), and
		\textup{(C4)} with the same gap index \(T\) gives
		\[
		|\Delta_ng_n(E;v_n)^{-1}|_{\mathfrak s,K_\lambda}\leq t_n<\frac18.
		\]
		Applying the Neumann series directly to
		\(f_n=g_n(E;v_n)^{-1}\) yields
		\[
		|g_{n+1}(E;v_n)^{-1}|_{\mathfrak s,K_\lambda}
		\leq\frac{p_n\mathcal B_{n,T}}{1-t_n}
		=p_{n+1}\mathcal B_{n,T}
		\leq p_{n+1}\mathcal B_{n+1,T'},
		\]
		because \(R_{n,L}\leq R_{n+1,S}\leq R_{n+1,L}\). At \(n=n_0\),
		this gives precisely the multiplier
		\(p_{n_0+1}=p_{n_0}/(1-t_{n_0})\) required by \textup{(H4)} at scale \(n_0+1\).
		
		All hypotheses now hold at \(n_0+1\). We can therefore apply
		Proposition~\ref{porp1} and the
		propagation argument successively to construct every subsequent scale.
		Conclusion \textup{(C6)} gives convergence at the fixed parameter
		\(K_\lambda\), and \textup{(C7)} preserves each resonant plateau.
		Finally, for \(y\in P_n\cap\Omega_{\rm ret}\), both adjacent gaps have
		length \(\ell_n=Q_n^+-1\), and
		\[
		(1-\varepsilon_\infty)\ell_n\log\lambda
		\leq\log\Lambda_n^\pm(y)\leq\ell_n\log(4\lambda).
		\]
		This proves \eqref{thm2b}.
	\end{proof}

	\subsection{Properties of the limiting potential}
	\label{sec9}
	
	Passing to the limit \(n\to\infty\) in
	Theorem~\ref{thm2} yields the following
	properties of \(v_\infty\in G^{\mathfrak s,K_\lambda}(\T)\) at every
	scale \(j\geq n_0(\lambda)\), where we set
	\begin{equation*}
		\Gamma_\lambda
		:=(1-\varepsilon_\infty(\lambda))\log\lambda>0.
	\end{equation*}
	
	\begin{corollary}
		\label{coro1}
		For all sufficiently large \(\lambda\) and every scale \(j\geq n_0(\lambda)\):
		\begin{enumerate}
			\item Every regular open \(I_j\)-gap block \(G\) for \(v_\infty\) of
			length \(|G|\geq Q_j^--1\) satisfies
			\begin{equation}\label{coro1a}
				\log\|G(E)\|\geq\Gamma_\lambda |G|,
				\qquad 0\leq E\leq E_j,
			\end{equation}
			and \(\sup_{x\in\T}\max\{\|A_{v_\infty,E}(x)\|,\|A_{v_\infty,E}(x)^{-1}\|\}\leq 4\lambda\)
			for \(0\leq E\leq E_{n_0}\).
			\item Pointwise on regular return atoms,
			\(|\cos\Phi_j(0,y)\cos\Theta_j(0,y)|\geq 1/2\) for
			\(y\in I_j\cap\Omega_{\rm ret}\), and on the annulus
			\((I_j\setminus\widetilde I_{j+1})\cap\Omega_{\rm ret}\),
			\begin{equation}\label{coro1b}
				|g_{I_j}(0,y;v_\infty)|
				\geq\frac\lambda4 e^{-\beta|y|^{-\nu}},
				\qquad
				|(C_{I_j}(0,y;v_\infty))_{11}|
				\geq\frac\lambda8 e^{-\beta|y|^{-\nu}}.
			\end{equation}
			Moreover, for every \(r>j\) and \(y\in I_r\cap\Omega_{\rm ret}\),
			\begin{equation}\label{coro1c}
				|g_{I_j}(0,y;v_\infty)-g_{I_r}(0,y;v_\infty)|
				\leq C_0\sum_{k=j}^{r-1}\sum_{T\in\{S,L\}}
				e^{-c_0R_{k,T}\log\lambda+C_0R_{k,T}^\vartheta}
				\leq Ce^{-c_0Q_j^-\log\lambda/2}.
			\end{equation}
			\item On the resonant plateaus \(P_j=P_{j,-}\cup P_{j,+}\), we have
			\(|P_j|=p_*|I_j|\) with \(p_*=1/128\), and for every
			\(y\in P_j\cap\Omega_{\rm ret}\),
			\begin{equation}\label{coro1d}
				r_{I_j}^+(y)=r_{I_j}^-(y)=Q_j^+,\qquad
				(C_{I_j}(E_j,y;v_\infty))_{11}=0,
			\end{equation}
			while the singular values \(\Lambda_{j,y}^\pm:=\|B_{I_j}^\pm(E_j,y;v_\infty)\|\)
			satisfy \eqref{thm2b}.
		\end{enumerate}
	\end{corollary}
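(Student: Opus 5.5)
The plan is to obtain each item of Corollary~\ref{coro1} by passing to the limit \(n\to\infty\) in the scale-by-scale conclusions of Theorem~\ref{thm2} and Proposition~\ref{porp1}. The basic observation is that for a fixed scale \(j\), every later correction \(\delta_m\) with \(m>j\) is supported in \(\widetilde I_m\Subset\mathcal K_j\subset I_j\) by \eqref{eqsupp}. Lemma~\ref{lem3} therefore shows that the open \(I_j\)-gap blocks, their SVD angles \(\Phi_j,\Theta_j\), and the functions \(\tan\Phi_j,\tan\Theta_j\) computed with \(v_\infty\) coincide with those computed with \(v_j\) (or with \(v_{j-1}\)), for every \(E\). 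Likewise \(g_{I_j}(E,x;v_\infty)\) equals \(g_{I_j}(E,x;v_j)\) at every \(x\) outside \(\widetilde I_{j+1}\). Under this identification, each estimate stated at scale \(j\) for \(v_j\) transfers to \(v_\infty\) without any limiting argument, except on \(\widetilde I_{j+1}\).

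For item (1), the growth bound is (H1) at scale \(j\) (or (C3) from scale \(j-1\)), with \(\varepsilon_j\le\varepsilon_\infty\). Every regular open \(I_j\)-gap has length at least \(R_{j,S}=Q_j^--1\), so \eqref{coro1a} follows directly. The one-step bound is \eqref{porp1b} applied to \(v_\infty\): convergence in (H0) gives \(\|v_\infty\|_{C^0}\le2\), and the same elementary norm estimate then applies.

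For item (2), the cosine bound is (H2). On \((I_j\setminus\widetilde I_{j+1})\cap\Omega_{\rm ret}\), conclusion (C2) together with (C7) gives \(g_{I_j}(0,y;v_\infty)\le-\bar\sigma_j\lambda e^{-\beta|y|^{-\nu}}\). Combining this with \(\bar\sigma_j\ge1/4\) from \eqref{eq24} and \(|\cos\cos|\le1\) through Lemma~\ref{lem2} yields \eqref{coro1b}. The entry bound uses \(|a|=|g||\cos\Phi\cos\Theta|\ge\frac12|g|\), which produces the factor \(1/8\).

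For \eqref{coro1c}, fix \(y\in I_r\). The potential term \(E-\lambda v_\infty(y)\) cancels in the difference, as noted in the remark following Lemma~\ref{lem3}. The difference therefore telescopes into \(\sum_{k=j}^{r-1}\bigl[(\tan\Phi_{k+1}-\tan\Phi_k)+(\tan\Theta_{k+1}-\tan\Theta_k)\bigr]\), where all angles are evaluated with \(v_\infty\). By the identification above these agree with the angles at \(v_k\), so (C4) bounds each summand by \(C_0e^{-c_0R_{k,T}\log\lambda+C_0R_{k,T}^\vartheta}\). Supremum norms are dominated by the Gevrey norms. To sum the series, use that the \(R_{k,T}\ge R_{j,S}\ge Q_j^-/2\) grow at least geometrically and that \(C_0R^\vartheta=o(R\log\lambda)\). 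This gives a geometric series, and absorbing the sublinear terms yields the bound \(Ce^{-c_0Q_j^-\log\lambda/2}\), possibly after shrinking the constant.

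For item (3), the return times come from \(P_j\subset\mathcal A_j^0\) and Lemma~\ref{lem7}(1). The measure statement is immediate from the definitions. The resonance \(g_{I_j}(E_j,y;v_\infty)=0\) follows from (C1) and (C7), and Lemma~\ref{lem2} converts it into \((C_{I_j})_{11}=0\). The singular value bounds are exactly \eqref{thm2b}.

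I expect the main obstacle to be the bookkeeping in \eqref{coro1c}. One has to check that the scale-\(k\) angles evaluated at \(v_\infty\) really coincide with those for which (C4) is stated; this holds because the corrections at scales \(>k\) lie in \(I_{k+1}\subset I_k\), and because (C4) is already formulated at \((E,x;v_k)\) for both scales \(k\) and \(k+1\). A second check is that the \(I_{k+1}\)-angles depend only on \(v\) outside \(I_{k+1}\), so that Lemma~\ref{lem3} applies again at that scale.
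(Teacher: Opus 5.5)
Your proposal is correct and follows essentially the same route as the paper. You isolate the scale-\(j\) open gaps from all later corrections via Lemma~\ref{lem3} and the nesting \eqref{eqsupp}/(C7), then read off (H1)/(C3), (H2), (C2) with \(\bar\sigma_j\geq1/4\), and (C1), and you obtain \eqref{coro1c} by cancelling the potential term and telescoping (C4) with the angles at \(v_\infty\) identified with those at \(v_k\). Your summation of the series in \eqref{coro1c} fills in a step the paper leaves implicit, and no shrinking of \(c_0\) is needed there: the factor \(1/2\) in the exponent already absorbs both the \(C_0R^\vartheta\) losses and the shift \(R_{j,S}=Q_j^--1\).
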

	
	\begin{proof}
		Part~(1) follows from the initial estimate \textup{(H1)}, \textup{(C3)},
		the isolation of the open gaps from later corrections, and the uniform one-step bound. Part~(3) follows
		from \textup{(C1)}, \textup{(C7)}, and
		\eqref{thm2b}. In
		part~(2), \eqref{coro1b} follows from \textup{(C2)},
		\textup{(C7)}, and \(\bar\sigma_j\geq1/4\) in
		\eqref{eq24}. Finally, for \(j\leq k<r\)
		and \(y\in I_r\subset I_{k+1}\), the open \(I_k\)- and
		\(I_{k+1}\)-gap blocks avoid \(I_{k+1}\). Since
		\(\operatorname{supp}(v_\infty-v_k)\subset\widetilde I_{k+1}\), these
		blocks and their compatible singular directions are the same for
		\(v_k\) and \(v_\infty\). The explicit potential term cancels in
		\(g_{I_{k+1}}-g_{I_k}\). Applying \textup{(C4)} to the remaining tangent
		differences and summing over \(k=j,\ldots,r-1\) proves
		\eqref{coro1c}.
	\end{proof}

	\section{Lyapunov exponent bounds and proof of Theorem~\ref{thm1}}
	\label{sec10}
	
	The gap growth, connector floors, and resonant plateaus of
	Corollary~\ref{coro1} yield a lower bound for
	\(L_{v_\infty}(0)\) and upper bounds for \(L_{v_\infty}(E_j)\).
	
	\subsection{Lower bound at zero energy}
	
	Following the exceptional-set method of \cite[Section~7.1]{LTY2025}, fix
	exponents
	\begin{equation}\label{eq29}
		1<b<a<\frac1{\tau\nu},
	\end{equation}
	and for each scale \(j\geq n_0(\lambda)\), define the tiny central
	interval, orbit length, and exceptional phase set by
	\begin{equation*}
		I_j^*:=(-\rho_j^a,\rho_j^a),\qquad
		m_j:=\lfloor\rho_j^{-b}\rfloor,\qquad
		\mathfrak B_j:=\bigcup_{k=0}^{m_j}\bigl(I_j^*-k\alpha\bigr).
	\end{equation*}
	
	\begin{lemma}
		\label{lem16}
		As \(j\to\infty\), we have
		\(|\mathfrak B_j|\leq 4\rho_j^{a-b}\to0\),
		\(\rho_j^{-a\nu}\leq C(\gamma,\tau)(Q_j^-)^{\tau a\nu}=o(Q_j^-)\), and
		\(H_j/m_j\leq C\rho_j^{b-1}\to0\). Moreover, for all sufficiently large
		\(j\) and every \(x\in\Omega_{\rm ret}\setminus\mathfrak B_j\), every
		visit \(y=T^kx\in I_j\) (\(0\leq k\leq m_j\)) satisfies
		\begin{equation}\label{lem16a}
			|(C_{I_j}(0,y;v_\infty))_{11}|
			\geq\kappa_j:=\frac{\lambda}{32}
			\exp\bigl\{-\beta\rho_j^{-a\nu}\bigr\}.
		\end{equation}
	\end{lemma}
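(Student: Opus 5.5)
The measure and arithmetic bounds are direct computations; the pointwise bound \eqref{lem16a} follows by locating each visit in the correct annulus of the nested bases and invoking Corollary~\ref{coro1}.

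\emph{Measure and arithmetic bounds.} The set \(\mathfrak B_j\) is a union of \(m_j+1\) intervals of length \(2\rho_j^a\). Hence
\[
|\mathfrak B_j|\leq 2(\rho_j^{-b}+1)\rho_j^a\leq 4\rho_j^{a-b},
\]
which tends to \(0\) because \(a>b\) and \(\rho_j\to0\) by Lemma~\ref{lem6}. By \eqref{lem6b},
\[
\rho_j^{-a\nu}\leq C(Q_j^-)^{\tau a\nu},
\]
and \(\tau a\nu<1\) by \eqref{eq29}, so this is \(o(Q_j^-)\). By \eqref{lem8a}, \(H_j<2\rho_j^{-1}\), while \(m_j\geq\frac12\rho_j^{-b}\). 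Therefore \(H_j/m_j\leq 4\rho_j^{b-1}\to0\), since \(b>1\).

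\emph{Reduction to an annulus.} Take \(x\in\Omega_{\rm ret}\setminus\mathfrak B_j\) and a visit \(y=T^kx\in I_j\) with \(0\le k\le m_j\). Since \(x\notin I_j^*-k\alpha\), we have \(|y|\geq\rho_j^a\). The nested bases shrink to \(0\) (\(\rho_{r+1}\le\frac23\rho_r\)), so there is a largest \(r\geq j\) with \(y\in I_r\). Then \(y\in I_r\setminus I_{r+1}\subset I_r\setminus\widetilde I_{r+1}\), where \(\widetilde I_{r+1}\subset I_{r+1}\). Corollary~\ref{coro1}(2) at scale \(r\) gives
\[
|g_{I_r}(0,y;v_\infty)|\geq\frac\lambda4e^{-\beta|y|^{-\nu}}\geq\frac\lambda4e^{-\beta\rho_j^{-a\nu}}.
\]
If \(r=j\), the cosine bound \(|\cos\Phi_j\cos\Theta_j|\ge1/2\) converts this into \(|(C_{I_j})_{11}|\ge\frac\lambda8e^{-\beta\rho_j^{-a\nu}}\), which is already stronger than \eqref{lem16a}.

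\emph{Transfer from scale \(r\) back to scale \(j\) (the main step).} If \(r>j\), we pass back to \(g_{I_j}\) using \eqref{coro1c}:
\[
|g_{I_j}(0,y)|\geq\frac\lambda4e^{-\beta\rho_j^{-a\nu}}-Ce^{-c_0Q_j^-\log\lambda/2}.
\]
Since \(\beta\rho_j^{-a\nu}=o(Q_j^-)\) and \(\log\lambda\ge1\), the error is at most \(\frac\lambda8e^{-\beta\rho_j^{-a\nu}}\) for \(j\) large, uniformly in \(\lambda\). This gives \(|g_{I_j}(0,y)|\geq\frac\lambda8e^{-\beta\rho_j^{-a\nu}}\). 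Multiplying by \(|\cos\Phi_j\cos\Theta_j|\ge1/2\) yields \(\frac\lambda{16}e^{-\beta\rho_j^{-a\nu}}\), which exceeds \(\kappa_j\). The only care needed is that the comparison \(o(Q_j^-)\) versus \(c_0Q_j^-\log\lambda/2\) is uniform in \(r\), which holds because \eqref{coro1c} already sums the whole tail. The factor \(1/32\) in \(\kappa_j\) leaves room for the constants.
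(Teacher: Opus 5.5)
Your proposal is correct and matches the paper's proof essentially step for step. You use the same elementary bounds from \eqref{lem6b}, \eqref{lem8a} and \eqref{eq29}, and you choose the same maximal scale \(r\geq j\) with \(y\in I_r\) so that \eqref{coro1b} applies on \(I_r\setminus\widetilde I_{r+1}\). You then transfer back to scale \(j\) exactly as the paper does, via the telescoping bound \eqref{coro1c} together with \(\rho_j^{-a\nu}=o(Q_j^-)\) and the cosine floor. Treating \(r=j\) separately and the remark about uniformity in \(\lambda\) are harmless but not needed, since the lemma is stated for fixed \(\lambda\) and large \(j\).
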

	
	\begin{proof}
		The measure bound \(|\mathfrak B_j|\leq 2(m_j+1)\rho_j^a\leq 4\rho_j^{a-b}\),
		the sublinear growth \(\rho_j^{-a\nu}\leq C(\gamma,\tau)(Q_j^-)^{\tau a\nu}=o(Q_j^-)\),
		and \(H_j/m_j\leq 2\rho_j^{-1}/m_j\leq C\rho_j^{b-1}\) follow directly
		from \eqref{lem6b}, \eqref{lem8a},
		and \eqref{eq29}.
		
		For \(x\in\Omega_{\rm ret}\setminus\mathfrak B_j\), any visit
		\(y=T^kx\in I_j\) with \(0\leq k\leq m_j\) lies in
		\((I_j\setminus I_j^*)\cap\Omega_{\rm ret}\), so \(|y|\geq\rho_j^a\).
		Let \(r:=\max\{k\geq j:y\in I_k\}\); then
		\(y\in I_r\setminus\widetilde I_{r+1}\), and
		\eqref{coro1b} at scale \(r\) gives
		\[
		|g_{I_r}(0,y;v_\infty)|
		\geq\frac{\lambda}{4}e^{-\beta|y|^{-\nu}}
		\geq\frac{\lambda}{4}e^{-\beta\rho_j^{-a\nu}}.
		\]
		Since \(\rho_j^{-a\nu}=o(Q_j^-)\), the telescoping bound
		\eqref{coro1c} implies that for all large \(j\),
		\[
		|g_{I_j}(0,y;v_\infty)-g_{I_r}(0,y;v_\infty)|
		\leq Ce^{-c_0Q_j^-\log\lambda/2}
		\leq\frac{\lambda}{8}e^{-\beta\rho_j^{-a\nu}}.
		\]
		Hence \(|g_{I_j}(0,y;v_\infty)|\geq\frac{\lambda}{8}e^{-\beta\rho_j^{-a\nu}}\),
		and multiplying by \(|\cos\Phi_j(0,y)\cos\Theta_j(0,y)|\geq 1/2\) yields
		\eqref{lem16a}.
	\end{proof}
	
	A uniform lower bound on the connecting entries gives the following
	growth estimate for a chain of hyperbolic blocks.
	
	\begin{lemma}\label{lem17}
		Let \(p\geq1\), \(R\geq1\), \(\Gamma>0\), \(M\geq1\), and
		\(0<\kappa\leq M\). Consider a matrix product
		\[
		W=D_{\Lambda_p}U_{p-1}D_{\Lambda_{p-1}}\cdots U_1D_{\Lambda_1},
		\]
		where each diagonal block has length \(\ell_i\geq R\) with
		\(\log\Lambda_i\geq\Gamma\ell_i\), and each connector satisfies
		\(\|U_i\|\leq M\) and \(|(U_i)_{11}|\geq\kappa\). If
		\[
		\eta:=16M^2\kappa^{-2}e^{-2\Gamma R}\leq\frac14,
		\]
		then for \(\ell=(p-1)+\sum_{i=1}^p\ell_i\),
		\begin{equation}\label{lem17a}
			\log\|W\|\geq
			\left(\Gamma-\frac{\Gamma+\log^+(\kappa^{-1})+2\eta}{R}
			\right)\ell.
		\end{equation}
	\end{lemma}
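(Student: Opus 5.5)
This is a scalar version of part~(1) of Lemma~\ref{lem14}. We apply \(W\) to \(e_1\) and track the projective slope. Write the vector after \(D_{\Lambda_i}\) as \(\xi_i(1,s_i)^{\mathsf T}\), with \(s_1=0\) and \(\xi_1=\Lambda_1\). For \(U_i=\left(\begin{smallmatrix}a_i&b_i\\c_i&d_i\end{smallmatrix}\right)\), the Riccati recursion reads
\[
s_{i+1}=\Lambda_{i+1}^{-2}\frac{c_i+d_is_i}{a_i+b_is_i},\qquad
\xi_{i+1}=\Lambda_{i+1}\xi_i(a_i+b_is_i).
\]

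The first step is an inductive bound \(|s_i|\leq 2M\kappa^{-1}e^{-2\Gamma R}\) for \(i\geq2\). Assume it holds at step \(i\). Then
\[
|b_is_i|\leq M|s_i|\leq 2M^2\kappa^{-1}e^{-2\Gamma R}=\tfrac{\eta\kappa}{8}\leq\tfrac{\kappa}{2},
\]
so \(|a_i+b_is_i|\geq\kappa/2\). Also \(|c_i+d_is_i|\leq M(1+|s_i|)\leq 2M\), and \(\Lambda_{i+1}^{-2}\leq e^{-2\Gamma R}\). Together these give \(|s_{i+1}|\leq 4M\kappa^{-1}e^{-2\Gamma R}\). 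The slight constant mismatch is handled by fixing the inductive constant as \(4M\kappa^{-1}e^{-2\Gamma R}\). The smallness \(M|s_i|/\kappa\leq 4M^2\kappa^{-2}e^{-2\Gamma R}=\eta/4\leq 1/16\) still closes the induction. The relative denominator loss is then
\[
\eta_i:=\frac{|b_is_i|}{|a_i|}\leq\frac{\eta}{4}\leq\frac1{16}.
\]

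The second step sums the logarithms. We have
\[
\log\|W\|\geq\log|\xi_p|\geq\sum_{i=1}^p\log\Lambda_i+\sum_{i=1}^{p-1}\bigl(\log|a_i|+\log(1-\eta_i)\bigr).
\]
Using \(\log(1-t)\geq-2t\) and \(\log|a_i|\geq-\log^+\kappa^{-1}\), this gives
\[
\log\|W\|\geq\Gamma\sum_i\ell_i-(p-1)\bigl(\log^+\kappa^{-1}+2\eta\bigr).
\]
Here I use \(2\eta_i\leq\eta/2\leq2\eta\), so the generous constant is harmless.

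The third step converts to the normalized length \(\ell\). Since \(\ell=(p-1)+\sum\ell_i\), we have \(\Gamma\sum\ell_i=\Gamma\ell-\Gamma(p-1)\). Each block has length at least \(R\), so \(p-1\leq\sum\ell_i/R\leq\ell/R\). Hence
\[
\log\|W\|\geq\Gamma\ell-(p-1)\bigl(\Gamma+\log^+\kappa^{-1}+2\eta\bigr)\geq\Bigl(\Gamma-\frac{\Gamma+\log^+\kappa^{-1}+2\eta}{R}\Bigr)\ell,
\]
which is \eqref{lem17a}.

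The only delicate point is the first step: choosing the inductive slope bound so that the absolute hypothesis \(\eta\leq1/4\) keeps \(|b_is_i|\) a small fraction of \(\kappa\) uniformly in \(i\). This needs the factor \(16\) in \(\eta\), and needs \(\kappa\leq M\) so that \(1+|s_i|\leq2\). Everything else is bookkeeping. The case \(p=1\) is trivial, since then \(\|W\|=\Lambda_1\) and \(\ell=\ell_1\).
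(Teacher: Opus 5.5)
Your proposal is correct and is essentially the paper's own proof. It uses the same Riccati recursion for the slope of \(We_1\) and an inductive bound \(|s_i|\leq cM\kappa^{-1}e^{-2\Gamma R}\); the paper takes \(c=3\), you settle on \(c=4\), and either closes under \(\eta\leq 1/4\) and \(\kappa\leq M\). It then uses the denominator estimate \(|a_i+b_is_i|\geq|a_i|(1-\eta_i)\) with \(\log(1-\eta_i)\geq-2\eta\), and absorbs the \(p-1\) connector losses via \((p-1)R\leq\sum_i\ell_i\leq\ell\). Your mid-argument switch of the constant is harmless.
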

	
	\begin{proof}
		Write the vector obtained after \(D_{\Lambda_i}\) in \(We_1\) as
		\(x_i(1,s_i)^{\mathsf T}\) with \(s_1=0\). The projective recursion
		\(s_{i+1}=\Lambda_{i+1}^{-2}(u^{(i)}_{21}+u^{(i)}_{22}s_i)/(u^{(i)}_{11}+u^{(i)}_{12}s_i)\)
		and \(\Lambda_i^{-2}\leq e^{-2\Gamma R}\) then give inductively
		\(|s_i|\leq 3M\kappa^{-1}e^{-2\Gamma R}\leq\kappa/(2M)\leq 1/2\), so
		\(M|s_i|/\kappa\leq\eta\leq 1/4\) and
		\(|u^{(i)}_{11}+u^{(i)}_{12}s_i|\geq\kappa(1-\eta)\). Hence
		\begin{align*}
			\log\|W\|\geq\log|x_p|
			&\geq\sum_{i=1}^p\log\Lambda_i+(p-1)\log\kappa+(p-1)\log(1-\eta)\\
			&\geq\Gamma\bigl(\ell-(p-1)\bigr)-(p-1)\bigl(\log^+(\kappa^{-1})+2\eta\bigr),
		\end{align*}
		which yields \eqref{lem17a} since \((p-1)R\leq\sum_i\ell_i\leq\ell\).
	\end{proof}
	
	\begin{proposition}\label{porp2}
		For the limiting potential \(v_\infty\) of
		Theorem~\ref{thm2},
		\begin{equation*}
			L_{v_\infty}(0)
			\geq(1-\varepsilon_\infty(\lambda))\log\lambda.
		\end{equation*}
	\end{proposition}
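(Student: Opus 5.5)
We plan to use the exceptional-set method: fix a large scale \(j\), and for phases \(x\in\Omega_{\rm ret}\setminus\mathfrak B_j\) bound \(\log\|A^{(m_j)}_{v_\infty,0}(x)\|\) from below along the orbit segment of length \(m_j\). This segment is cut at its visits to \(I_j\). Lemma~\ref{lem16} tells us that \(\mathfrak B_j\) is negligible, that the entrance time \(H_j\) is negligible compared with \(m_j\), and that every visit \(y=T^kx\in I_j\) with \(0\le k\le m_j\) has connector entry \(|(C_{I_j}(0,y;v_\infty))_{11}|\ge\kappa_j\). The key point is that \(\log\kappa_j^{-1}\le \beta\rho_j^{-a\nu}+O(1)=o(Q_j^-)\).

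\textbf{Step 1: decomposition of the orbit segment.} For \(x\notin\mathfrak B_j\), let \(0\le k_1<\dots<k_p\le m_j\) be the visits to \(I_j\). The first visit satisfies \(k_1\le H_j\), and symmetrically the part after the last visit has length at most \(H_j\) (backward entrance). Between consecutive visits we have regular open \(I_j\)-gap blocks, each of length at least \(Q_j^--1\).

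Write the middle product from \(k_1\) to \(k_p\) in SVD form. Each gap block is \(\epsilon R_\theta D_\Lambda R_\phi\), and the visit matrices become connectors \(U_i=C_{I_j}(0,y_i;v_\infty)\). Here we use that the left angle of the gap block preceding \(y\) is \(\Theta_j(0,y)=\theta^-_{I_j}\), and the right angle of the following block is \(\Phi_j(0,y)\), both by the definitions at the visit \(y\). The product then has the form \(R\,D_{\Lambda_p}U_{p-1}\cdots U_1D_{\Lambda_1}R'\), up to the outer end connectors, which we absorb via \(\|A^{\pm1}\|\le4\lambda\).

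\textbf{Step 2: growth of the middle product.} We apply Lemma~\ref{lem17} with the following choices:
\begin{itemize}
\item \(\Gamma=\Gamma_\lambda\), taken from Corollary~\ref{coro1}(1) at \(E=0\);
\item \(R=Q_j^--1\);
\item \(M=4\lambda\), since rotations preserve the norm;
\item \(\kappa=\kappa_j\).
\end{itemize}
Then \(\eta=16M^2\kappa_j^{-2}e^{-2\Gamma R}\le \exp\{C\log\lambda+2\beta\rho_j^{-a\nu}-2\Gamma_\lambda(Q_j^--1)\}\), which tends to \(0\) since \(\rho_j^{-a\nu}=o(Q_j^-)\). This gives
\[\log\|W\|\ge\Bigl(\Gamma_\lambda-\tfrac{\Gamma_\lambda+\log^+\kappa_j^{-1}+2\eta}{Q_j^--1}\Bigr)\ell,\]
and the correction term tends to \(0\) as \(j\to\infty\).

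\textbf{Step 3: handling the ends.} The initial and final segments have length at most \(H_j\), and each matrix has norm at most \(4\lambda\) with inverse also bounded by \(4\lambda\). Hence \(\log\|A^{(m_j)}(x)\|\ge\log\|W\|-2(H_j+1)\log(4\lambda)\). If \(x\) has no visit or one visit, the same bound holds trivially, because \(\ell\ge m_j-2H_j-2\) fails to matter once we note \(m_j>2H_j+2\) for large \(j\), which forces at least two visits. Since \(\ell\ge m_j-2H_j-2\) and \(H_j/m_j\to0\), we obtain \(\frac1{m_j}\log\|A^{(m_j)}(x)\|\ge\Gamma_\lambda-o_j(1)\) for \(x\notin\mathfrak B_j\).

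\textbf{Step 4: integration.} On \(\mathfrak B_j\) we use only \(\log\|\cdot\|\ge0\), since the matrices lie in \(\mathrm{SL}(2)\). Integrating with \eqref{eqLE} gives
\[\frac1{m_j}\int\log\|A^{(m_j)}\|\ge(1-|\mathfrak B_j|)(\Gamma_\lambda-o(1)).\]
The limit along \(m_j\to\infty\) equals \(L_{v_\infty}(0)\), so letting \(j\to\infty\) yields \(L_{v_\infty}(0)\ge\Gamma_\lambda\), which is the claim.

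The main obstacle is Step 1: we must correctly identify the adjacent block angles with \(\Phi_j,\Theta_j\) at each visit, so that the connectors are exactly \(C_{I_j}\) and Lemma~\ref{lem16} applies at every interior visit. We also need the first and last visits to be handled without a lower bound on a connector outside the window \([0,m_j]\). Restricting \(W\) to the blocks strictly between \(k_1\) and \(k_p\) avoids needing such bounds.
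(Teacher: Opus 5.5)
Your proposal is correct and follows the paper's proof essentially step for step. It uses the same exceptional set \(\mathfrak B_j\), the same SVD factorization of the interior product with connectors \(C_{I_j}(0,T^{t_i}x;v_\infty)\), Lemma~\ref{lem17} with \(M=4\lambda\), \(\kappa=\kappa_j\), \(R=Q_j^--1\), the same \((2H_j+2)\log(4\lambda)\) cost for the boundary fragments, and the same integration step using \(\log\|\cdot\|\geq0\) on \(\mathfrak B_j\). The only slip is in indexing: with visits \(k_1<\cdots<k_p\) the interior word has \(p-1\) gap blocks and \(p-2\) connectors, so \(D_{\Lambda_p}U_{p-1}\cdots U_1D_{\Lambda_1}\) should be reindexed.
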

	
	\begin{proof}
		For \(x\in\Omega_{\rm ret}\setminus\mathfrak B_j\), list all return times
		of the orbit segment \(x,Tx,\ldots,T^{m_j-1}x\) to \(I_j\) as
		\(0\leq t_0<t_1<\cdots<t_p<m_j\). By \eqref{lem8a}, the
		boundary fragments satisfy \(t_0\leq H_j\) and \(m_j-t_p\leq H_j+1\);
		since \(H_j/m_j\to0\), we have \(m_j>2H_j+1\) and hence \(p\geq1\) for
		all large \(j\). Write the open \(I_j\)-gap blocks as
		\(G_i=A_{v_\infty,0}^{(t_i-t_{i-1}-1)}(T^{t_{i-1}+1}x)=\epsilon_iR_{\theta_i}D_{\Lambda_i}R_{\phi_i}\)
		of length \(\ell_i=t_i-t_{i-1}-1\geq R_{j,S}=Q_j^--1\), so that
		\(\log\Lambda_i\geq\Gamma_\lambda\ell_i\) by
		\eqref{coro1a}. The interior product
		\(W_j(x)=A_{v_\infty,0}^{(t_p-t_0-1)}(T^{t_0+1}x)\) factors as
		\[
		W_j(x)=\Bigl(\prod_{i=1}^p\epsilon_i\Bigr)R_{\theta_p}
		\bigl(D_{\Lambda_p}U_{p-1}D_{\Lambda_{p-1}}\cdots U_1D_{\Lambda_1}\bigr)R_{\phi_1},
		\qquad
		U_i=C_{I_j}(0,T^{t_i}x;v_\infty).
		\]
		By Corollary~\ref{coro1} and
		Lemma~\ref{lem16}, \(\|U_i\|\leq 4\lambda\) and
		\(|(U_i)_{11}|\geq\kappa_j\), with
		\[
		\eta_j:=16(4\lambda)^2\kappa_j^{-2}e^{-2\Gamma_\lambda R_{j,S}}
		=2^{18}\exp\bigl\{2\beta\rho_j^{-a\nu}-2\Gamma_\lambda R_{j,S}\bigr\}
		\longrightarrow0\qquad(j\to\infty).
		\]
		The full cocycle iterate factors as
		\(A_{v_\infty,0}^{(m_j)}(x)=\mathcal U_j W_j(x)\mathcal V_j\), with
		boundary fragments \(\mathcal U_j=A_{v_\infty,0}^{(m_j-t_p)}(T^{t_p}x)\)
		and \(\mathcal V_j=A_{v_\infty,0}^{(t_0+1)}(x)\). Applying
		Lemma~\ref{lem17} to \(W_j(x)\) and using
		\(\log\|\mathcal U_j^{-1}\|+\log\|\mathcal V_j^{-1}\|\leq(2H_j+2)\log(4\lambda)\),
		we obtain, uniformly for \(x\in\Omega_{\rm ret}\setminus\mathfrak B_j\),
		\[
		\frac1{m_j}\log\|A_{v_\infty,0}^{(m_j)}(x)\|
		\geq\Gamma_\lambda
		-C\Bigl(\frac{\log\lambda}{Q_j^-}+(Q_j^-)^{\tau a\nu-1}+\rho_j^{b-1}\log\lambda\Bigr)
		=\Gamma_\lambda-o(1)\qquad(j\to\infty).
		\]
		Since \(\log\|A_{v_\infty,0}^{(m_j)}(x)\|\geq0\) for all \(x\in\T\) and
		\(|\mathfrak B_j|\to0\), integrating over \(\T\) and letting \(j\to\infty\)
		proves \(L_{v_\infty}(0)\geq\Gamma_\lambda=(1-\varepsilon_\infty(\lambda))\log\lambda\).
	\end{proof}
	
	\subsection{Upper bounds at the resonant energies}
	
	At each resonant energy \(E_j\) (\(j\geq n_0(\lambda)\)), every visit
	\(y\in P_j\cap\Omega_{\rm ret}\) lies in the central return atom
	\(\mathcal A_j^0\), so its forward and backward \(I_j\)-return gaps both
	have length \(Q_j^+-1\). The two-gap return block of total length
	\(2Q_j^+-1\) centered at \(y\),
	\begin{equation*}
		W_{j,y}(E_j)
		:=B_{I_j}^+(E_j,y;v_\infty)A_{v_\infty,E_j}(y)B_{I_j}^-(E_j,y;v_\infty),
	\end{equation*}
	coincides factor by factor with the stage-\(j\) block by
	\eqref{eqsupp} and admits the SVD factorization
	\begin{equation}\label{eq30}
		W_{j,y}(E_j)
		=\epsilon_{j,y}^+\epsilon_{j,y}^-R_{\theta_{I_j}^+(E_j,y;v_\infty)}D_{\Lambda_{j,y}^+}
		C_{I_j}(E_j,y;v_\infty)D_{\Lambda_{j,y}^-}
		R_{\phi_{I_j}^-(E_j,y;v_\infty)},
	\end{equation}
	where \((C_{I_j}(E_j,y;v_\infty))_{11}=0\) by
	\eqref{coro1d} and \(\Lambda_{j,y}^\pm\) satisfy the
	balance estimate \eqref{thm2b}.
	
	\begin{lemma}\label{lem18}
		For each fixed \(j\geq n_0(\lambda)\) and every sufficiently large
		\(m\), the orbit segment \(x,Tx,\ldots,T^{m-1}x\) of any
		\(x\in\Omega_{\rm ret}\) contains a collection of pairwise disjoint
		centered resonant blocks \(W_{j,y_r}(E_j)\) (\(y_r\in P_j\cap\Omega_{\rm ret}\))
		whose total length \(\mathcal L_{m,j}(x)\) satisfies
		\begin{equation}\label{lem18a}
			\mathcal L_{m,j}(x)\geq\frac{p_*}{8}m.
		\end{equation}
	\end{lemma}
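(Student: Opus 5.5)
The plan is to count visits of the orbit to the plateau set \(P_j\), attach to each such visit its centered two-gap block, and then extract a disjoint subfamily by a greedy choice whose loss is bounded by the overlap structure.

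\emph{Counting visits by unique ergodicity.} Since \(T\) is uniquely ergodic and \(P_j\) is a finite union of intervals, the number of \(k\in[0,m)\) with \(T^kx\in P_j\) is \(|P_j|m+o(m)\), uniformly in \(x\). We have \(|P_j|=p_*|I_j|\) and, by \eqref{lem8b}, \(|I_j|>1/Q_j^+\). Hence for all \(m\) large (with \(j\) fixed) the number of plateau visits is at least \(\frac{p_*}{2}\,m/Q_j^+\), uniformly in \(x\in\Omega_{\rm ret}\).

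\emph{Block structure of a plateau visit.} For a visit \(y=T^kx\in P_j\cap\Omega_{\rm ret}\), \eqref{coro1d} gives \(r_{I_j}^\pm(y)=Q_j^+\). The centered block \(W_{j,y}(E_j)\) therefore occupies the time window \([k-Q_j^++1,\,k+Q_j^+-1]\), of length \(2Q_j^+-1\). We discard the visits with \(k<Q_j^+\) or \(k>m-Q_j^+\); there are at most \(O(Q_j^+)=o(m)\) of them, so the windows of the remaining visits lie inside \([0,m)\).

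\emph{Disjoint selection.} Successive plateau visits \(k<k'\) are both returns to \(I_j\) with forward first-return time \(Q_j^+\), so \(k'-k\geq Q_j^+\). Two windows \([k-Q_j^++1,k+Q_j^+-1]\) and \([k'-Q_j^++1,k'+Q_j^+-1]\) are disjoint exactly when \(k'-k\geq 2Q_j^+-1\). Taking every other plateau visit in increasing order is therefore not enough by itself: if \(k''-k=2Q_j^+\), the windows around \(k\) and \(k''\) are disjoint, but equality cases need care. Instead I would run a greedy selection. Choose the first admissible visit, discard all visits within distance \(2Q_j^+-2\) of it, and repeat. Each discarded visit lies in a window of length \(4Q_j^+-3\) around a chosen one, and plateau visits inside such a window are \(Q_j^+\)-separated, so each chosen visit eliminates at most \(4\) visits including itself. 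The selected blocks are then pairwise disjoint and number at least \(\frac14(\frac{p_*}{2}m/Q_j^+-o(m))\). Their total length satisfies
\[
\mathcal L_{m,j}(x)\ge(2Q_j^+-1)\cdot\frac{p_*m}{8Q_j^+}(1-o(1))\ge\frac{p_*}{8}m
\]
once the factor \(2-1/Q_j^+\geq 1+\) margin absorbs the \(o(1)\) loss, which holds for \(m\) large.

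\emph{Main obstacle.} The delicate point is uniformity in \(x\) together with the constant bookkeeping. Uniform distribution is uniform over all \(x\) for intervals, so the first issue is harmless. The constant \(p_*/8\) is obtained because the factor \(2\) in the block length \(2Q_j^+-1\) compensates for the factor \(1/4\) from the greedy selection, and the factor \(1/2\) from \(|I_j|>1/Q_j^+\) leaves a small margin that absorbs the \(o(m)\) boundary losses.
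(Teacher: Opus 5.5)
Your proposal is correct and follows essentially the paper's route: a uniform equidistribution count of plateau visits, the $Q_j^+$-separation of visits coming from $r_{I_j}^\pm=Q_j^+$, extraction of a disjoint subfamily of centered blocks, and $Q_j^+|I_j|>1$. The only differences are that you use a greedy selection together with the full block length $2Q_j^+-1$, whereas the paper keeps every other visit and uses the cruder length bound $Q_j^+$. Your hesitation about every-other selection is unfounded: alternate visits are at least $2Q_j^+\geq 2Q_j^+-1$ apart, which is exactly your own disjointness criterion, so the paper's simpler selection already works.
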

	
	\begin{proof}
		By unique ergodicity of the irrational rotation \(T\), for all
		sufficiently large \(m\) and all \(x\in\T\),
		\[
		\#\{0\leq k<m:T^kx\in P_j\}\geq \frac12m|P_j|.
		\]
		Since \(r_{I_j}^+(y)=r_{I_j}^-(y)=Q_j^+\) on \(P_j\), consecutive visits
		to \(P_j\) are separated by at least \(Q_j^+\) steps. We retain every
		second visit to \(P_j\) and discard at most the first and last visits
		if their \((2Q_j^+-1)\)-blocks extend beyond \([0,m-1]\). This leaves at least
		\(\frac18m|P_j|\) pairwise disjoint centered blocks of length
		\(2Q_j^+-1\geq Q_j^+\) each. By \eqref{lem8b}, their
		total length is at least
		\[
		\mathcal L_{m,j}(x)
		\geq\frac18m|P_j|Q_j^+
		=\frac{p_*}{8}m|I_j|Q_j^+
		\geq\frac{p_*}{8}m.
		\]
	\end{proof}
	
	\begin{proposition}\label{porp3}
		For all sufficiently large \(\lambda\) and all \(j\geq n_0(\lambda)\),
		\begin{equation}\label{porp3a}
			L_{v_\infty}(E_j)
			\leq(1-c_*)\log\lambda,
			\qquad c_*:=\frac{p_*}{64}=\frac1{8192}.
		\end{equation}
	\end{proposition}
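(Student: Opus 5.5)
We bound \(L_{v_\infty}(E_j)\) from above through the finite-\(m\) formula \eqref{eqLE}: \(L_{v_\infty}(E_j)\le \frac1m\int_\T\log\|A^{(m)}_{v_\infty,E_j}(x)\|\,dx\) for every \(m\). It therefore suffices to show that, for each fixed \(j\ge n_0(\lambda)\), all large \(m\), and every \(x\in\Omega_{\rm ret}\),
\[
\log\|A^{(m)}_{v_\infty,E_j}(x)\|\le m\log(4\lambda)-\tfrac{p_*}{16}\,m\log\lambda+o(m)+o_\lambda(1)\,m\log\lambda .
\]
Because \(\log 4=o(\log\lambda)\) and \(p_*/16>c_*=p_*/64\), this bound yields \eqref{porp3a} for large \(\lambda\).

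The first step applies Lemma~\ref{lem18} to the orbit segment \(x,\dots,T^{m-1}x\). It produces pairwise disjoint resonant blocks \(W_{j,y_r}(E_j)\), each of length \(2Q_j^+-1\), with total length \(\mathcal L_{m,j}(x)\ge \frac{p_*}{8}m\). We then write \(A^{(m)}\) as an alternating product of these blocks and the complementary stretches. By submultiplicativity and the one-step bound \(\|A_{v_\infty,E_j}\|\le 4\lambda\) (Corollary~\ref{coro1}(1), valid because \(E_j\le E_{n_0}\)), the complementary stretches contribute at most \((m-\mathcal L_{m,j})\log(4\lambda)\).

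The key estimate, and the main obstacle, is an upper bound on a single resonant block. From \eqref{eq30},
\[
\|W_{j,y}(E_j)\|=\|D_{\Lambda^+}C\,D_{\Lambda^-}\|,
\]
where \(C=C_{I_j}(E_j,y;v_\infty)\) has \(C_{11}=0\) and \(\|C\|\le 4\lambda\). Expanding the product entrywise, the entries are \(\Lambda^+\Lambda^- C_{11}=0\), \(\Lambda^+(\Lambda^-)^{-1}C_{12}\), \((\Lambda^+)^{-1}\Lambda^-C_{21}\), and \((\Lambda^+\Lambda^-)^{-1}C_{22}\). Hence
\[
\|W_{j,y}\|\le 2\cdot 4\lambda\,\max(\Lambda^+/\Lambda^-,\Lambda^-/\Lambda^+)+O(\lambda).
\]
Both gaps have the same length \(\ell_j=Q_j^+-1\), so the balance estimate \eqref{thm2b} gives \(|\log\Lambda^+-\log\Lambda^-|\le 2\tilde\varepsilon_\lambda\ell_j\log\lambda\). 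Therefore
\[
\log\|W_{j,y}\|\le 2\tilde\varepsilon_\lambda\ell_j\log\lambda+\log(16\lambda).
\]
Compared with the trivial bound \((2\ell_j+1)\log(4\lambda)\), this saves roughly \((2-2\tilde\varepsilon_\lambda)\ell_j\log\lambda-C\ell_j\). That saving is essentially the full block length times \(\log\lambda\): the resonance cancels the entire expanding coupling. The delicate point is that the balance between \(\Lambda^+\) and \(\Lambda^-\) must be uniform in \(y\in P_j\) and must hold at the energy \(E_j\) for the limit potential. Both are guaranteed by Theorem~\ref{thm2} and Corollary~\ref{coro1}(3), since \(E_j\le E_j\) lies in the admissible energy range \([0,E_j]\) for the scale-\(j\) gap estimates.

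Summing over the blocks gives
\[
\log\|A^{(m)}\|\le m\log(4\lambda)-\mathcal L_{m,j}\bigl[(1-\tilde\varepsilon_\lambda)\log\lambda-O(1)\bigr]+O(\#\text{blocks}\cdot\log\lambda).
\]
Each block has length \(2Q_j^+-1\ge Q_j^+\), so the number of blocks is at most \(\mathcal L_{m,j}/Q_j^+\). Since \(Q_j^+\ge X_\lambda\to\infty\) as \(\lambda\to\infty\), the last term is \(o_\lambda(1)\,\mathcal L_{m,j}\log\lambda\). Inserting \(\mathcal L_{m,j}\ge\frac{p_*}{8}m\), dividing by \(m\), and integrating over \(\Omega_{\rm ret}\) gives
\[
L_{v_\infty}(E_j)\le \log\lambda\bigl(1-\tfrac{p_*}{8}+o_\lambda(1)\bigr)+\log 4 .
\]
The constant \(p_*/8\) exceeds \(c_*=p_*/64\) with room to spare, which absorbs \(\tilde\varepsilon_\lambda\), \(\log 4/\log\lambda\), and the per-block \(O(\log\lambda)\) losses once \(\lambda\) is large. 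All of these error terms are uniform in \(j\ge n_0(\lambda)\).
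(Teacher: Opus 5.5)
Your argument is correct and follows the paper's proof. You use the disjoint resonant blocks of Lemma~\ref{lem18}, bound each block through the vanishing \((1,1)\)-entry in \eqref{eq30} together with the balance estimate \eqref{thm2b}, and bound the remaining steps by \(4\lambda\) via submultiplicativity. The only difference is bookkeeping: the paper bounds each block crudely by \(\frac34R\log\lambda\), which gives \(1-p_*/32\). You keep the full cancellation and absorb the per-block \(\log(16\lambda)\) loss using \(Q_j^+\geq X_\lambda\to\infty\), which gives the sharper \(1-p_*/8+o_\lambda(1)\); both suffice for \(c_*=p_*/64\).
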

	
	\begin{proof}
		Fix \(j\geq n_0(\lambda)\) and \(y\in P_j\cap\Omega_{\rm ret}\), and set
		\(q=Q_j^+\) and \(R=2q-1\). Since
		\(C=C_{I_j}(E_j,y;v_\infty)=\left(\begin{smallmatrix}0&b\\c&d\end{smallmatrix}\right)\)
		has vanishing \((1,1)\)-entry and \(\|C\|=\|A_{v_\infty,E_j}(y)\|\leq 4\lambda\),
		the product \(D_{\Lambda_{j,y}^+}CD_{\Lambda_{j,y}^-}\) equals
		\[
		D_{\Lambda_{j,y}^+}CD_{\Lambda_{j,y}^-}
		=\begin{pmatrix}
			0&b\Lambda_{j,y}^+/\Lambda_{j,y}^-\\
			c\Lambda_{j,y}^-/\Lambda_{j,y}^+&d/(\Lambda_{j,y}^+\Lambda_{j,y}^-)
		\end{pmatrix}.
		\]
		Consequently, \eqref{eq30} and the balance bound
		\eqref{thm2b} give
		\[
		\log\|W_{j,y}(E_j)\|
		\leq|\log\Lambda_{j,y}^+-\log\Lambda_{j,y}^-|+\log(12\lambda)
		\leq 2\tilde\varepsilon_\lambda(q-1)\log\lambda+\log\lambda+\log 12.
		\]
		Since \(2(q-1)/R\geq 2/3\) for \(q\geq2\) and \(\tilde\varepsilon_\lambda\to0\)
		as \(\lambda\to\infty\), for all sufficiently large \(\lambda\) every
		resonant block satisfies the cancellation bound
		\begin{equation}\label{eq31}
			\log\|W_{j,y}(E_j)\|
			\leq\frac34 R\log\lambda.
		\end{equation}
		
		Enumerate the disjoint resonant blocks of Lemma~\ref{lem18}
		chronologically as \(\mathcal W_1,\ldots,\mathcal W_N\). The full iterate then factors as
		\(A_{v_\infty,E_j}^{(m)}(x)=F_N\mathcal W_NF_{N-1}\cdots F_1\mathcal W_1F_0\),
		where the intervening products \(F_r\) have total length
		\(m-\mathcal L_{m,j}(x)\) and each remaining one-step factor has norm at
		most \(4\lambda\). Submultiplicativity together with
		\eqref{lem18a} and \eqref{eq31} yields, for all
		large \(m\) and \(x\in\Omega_{\rm ret}\),
		\[
		\frac1m\log\|A_{v_\infty,E_j}^{(m)}(x)\|
		\leq\left(1-\frac{p_*}{32}\right)\log\lambda+\log 4
		\leq\left(1-\frac{p_*}{64}\right)\log\lambda
		\]
		once \(\log 4\leq(p_*/64)\log\lambda\). Integrating over \(\T\) and
		taking \(m\to\infty\) proves \eqref{porp3a}.
	\end{proof}
	
	\subsection{Proof of the main theorem}
	\label{sec11}
	
	\begin{proof}[Proof of Theorem~\ref{thm1}]
		By Theorem~\ref{thm2}, for every
		sufficiently large \(\lambda\) the limiting potential
		\(v_\infty\in G^{\mathfrak s}(\T,\R)\) is well defined and the resonant
		energies \(E_j=\lambda h(\vartheta_d\rho_j)\) (\(j\geq n_0(\lambda)\))
		satisfy \(E_j\downarrow0\). Propositions~\ref{porp2}
		and~\ref{porp3} yield
		\[
		L_{v_\infty}(0)\geq(1-\varepsilon_\infty(\lambda))\log\lambda,
		\qquad
		\sup_{j\geq n_0(\lambda)}L_{v_\infty}(E_j)\leq(1-c_*)\log\lambda,
		\]
		with \(c_*=1/8192\) and \(\varepsilon_\infty(\lambda)\to0\) as
		\(\lambda\to\infty\). Taking \(\lambda\) large enough that
		\(\varepsilon_\infty(\lambda)\leq c_*/2\) and reindexing the sequence
		\(\{E_j\}_{j\geq n_0(\lambda)}\) completes the proof of Theorem~\ref{thm1}.
	\end{proof}
	
	\section*{Acknowledgements}
	
	The authors would like to appreciate Professor Y. Wang and S. Jitomirskaya for their helpful discussions.
	
	This work was supported by the National Natural Science Foundation
	of China (Grant Nos.~12271245 and~12671221).
	
	The main mathematical ideas and the overall proof strategy were developed
	by the authors. During revision of this manuscript, AI tools were used to
	assist with checking calculations and polishing the text. The authors are
	responsible for the mathematical content, references, and final text.

\end{document}